\DeclareMathSymbol{\twoheadrightarrow}
{\mathrel}{AMSa}{"10}
\def\Q{{\mathbb Q}}
\def\Z{{\mathbb Z}}
\def\C{{\mathbb C}}
\def\R{{\mathbb R}}
\def\F{{\mathbb F}}
\def\R{{\mathfrak R}}
\def\bphi{{\mathbf \phi}}
                                \def\one{{\mathbf 1}}
\def\Sn{{\mathbf S}_n}
\def\An{{\mathbf A}_n}
\def\Gal{\mathrm{Gal}}
\def\Perm{\mathrm{Perm}}
\def\Alt{\mathrm{Alt}}
\def\End{\mathrm{End}}
\def\Aut{\mathrm{Aut}}
\def\Mat{\mathrm{Mat}}
\def\Id{\mathrm{Id}}
\def\fchar{\mathrm{char}}
\def\GL{\mathrm{GL}}
\def\SL{\mathrm{SL}}
\def\PGL{\mathrm{PGL}}
\def\dim{\mathrm{dim}}
\def\delt{{\boldsymbol \delta}}
\newtheorem{thm}{Theorem}[section]
\newtheorem{lem}[thm]{Lemma}
\newtheorem{cor}[thm]{Corollary}
\newtheorem{prop}[thm]{Proposition}
\theoremstyle{definition}
\newtheorem{defn}[thm]{Definition}
\newtheorem{rem}[thm]{Remark}
\newtheorem{rems}[thm]{Remarks}
\title[Superelliptic  Jacobians]
{Superelliptic Jacobians and central simple representations}
\author[Yuri G. Zarhin]{Yuri G. Zarhin}
\address{Department of Mathematics, Pennsylvania State University,
University Park, PA 16802, USA}
\email{zarhin\char`\@math.psu.edu}
\thanks{Partially supported by the Simons Foundation Collaboration grant \# 585711.
Part  of this work was done in  December 2023 during my stay at the Max-Planck Institut f\"ur Mathematik (Bonn, Germany), whose hospitality and support are gratefully acknowledged.}
\begin{document}
\begin{abstract}
Let $f(x)$ be a polynomial of degree at least $5$ with complex coefficients and without repeated roots. Suppose that all the coefficients of $f(x)$ lie in a subfield $K$ of $\C$ such that: 
\begin{itemize}
\item $K$ contains a primitive p-th root of unity;
\item
 $f(x)$ is irreducible over $K$;
\item
  the Galois group $\Gal(f)$ of $f(x)$ acts doubly transitively on the set of roots of $f(x)$;
\item
 the index of every maximal subgroup of $\Gal(f)$ does {\sl not} divide $\deg(f)-1$. 
 \end{itemize}
Then the endomorphism ring of the Jacobian of the superelliptic curve $y^p=f(x)$ is isomorphic to the $p$th cyclotomic ring for all primes $p>\deg(f)$. 
\end{abstract}
\maketitle
\section{Introduction}
\label{Introd}
The aim of this paper is to explain how to compute the endomorphism algebra of Jacobians of smooth projective models of superelliptic curves $y^q=f(x)$ where $q=p^r$ is a prime power and $f(x)$ a polynomial of degree $n \ge 5$ with complex coefficients that is in ``general position''. Here
``general position'' means that there is a (sub)field $K$ such that all the coefficients of $f(x)$ lie in $K$ and the Galois group of $f(x)$ acts doubly transitively on the set of its roots (in particular, $f(x)$ is irreducible over $K$).  It turns out that for a broad class of the doubly transitive Galois groups (and under certain mild restrictions on $q$) the corresponding endomorphism algebra is ``as small as possible'', i.e., is canonically isomorphic to a product of cyclotomic fields $\Q(\zeta_{p^i})$  ($1 \le i \le r$).

 In order to state explicitly our results, let us start with the notation and some basic facts related to cyclotomic fields and cyclotomic polynomials.
As usual, $\Z,\Q,\C$ denote the ring of integers, the field of rational numbers and the field of complex numbers respectively.

Let $p$ be an odd prime and $\F_p$  the corresponding (finite) prime field of characteristic $p$. We write $\Z_p$ and $\Q_p$
for the ring of $p$-adic integers and the field $\Q_p$ of $p$-adic numbers respectively.
Let $r$ be a positive integer and $q=p^r$.
Let 
$$\zeta_q \in \C.$$
be a primitive $q$th root of unity. We write $\Q(\zeta_q)$ be the $q$th cyclotomic field and 
$$\Z[\zeta_q]=\sum_{i=0}^{\phi(q)-1}\Z\cdot \zeta_q^i$$
for its ring of integers. (Hereafter $\phi(q):=(p-1)p^{r-1}$ is the Euler function.)

Let us consider the polynomial
$$\mathcal{P}_q(t):=\sum_{j=0}^{q-1} t^j =\prod_{i=1}^r \Phi_{p^i}(t)\in \Z[t]$$
where 
$$\Phi_{p^i}(t)=\sum_{j=0}^{p-1} t^{i p^{r-1}}\in \Z[t]$$
is the $p^i$th {\sl cyclotomic} polynomial.

Let $f(x) \in \C[x]$ be a polynomial of degree $n\ge 4$ without repeated roots.  In what follows we always assume that {\sl either  $p$ does not divide $n$ or $q$ divides $n$}.

Let $C_{f,q}$ be a smooth projective model of the smooth affine curve
$$y^q=f(x).$$
It is well known
(\cite{Koo}, pp. 401--402, \cite{Towse}, Prop. 1 on p. 3359,
 \cite{Poonen}, p. 148)
that the genus $g(C_{f,p})$  of $C_{f,p}$ is $(q-1)(n-1)/2$ if $p$ does not divide $n$ and $(q-1)(n-2)/2$ if $q$ divides $n$.
The map
 $$(x,y) \mapsto (x, \zeta_p y)$$
gives rise to a non-trivial biregular automorphism
$$\delta_q: C_{f,q} \to C_{f,q}$$
of period $q$.

Let $J(C_{f,q})$  be the Jacobian of $C_{f,q}$;
it is a $g(C_{f,q})$-dimensional abelian variety.
We write $\End(J(C_{f,q}))$ for the ring of endomorphisms of $J(C_{f,q})$ and
$\End^0(J(C_{f,q}))=\End(J(C_{f,q}))\otimes\Q$ for the endomorphism algebra of $J(C_{f,q})$.
By functoriality,
$\delta_q$ induces an automorphism of $J(C_{f,q})$,
which we still denote by $\delta_q$.  It is known (\cite[p. 149]{Poonen}, \cite[p. 448]{SPoonen}, 
\cite[Lemma 4.8]{ZarhinM}) that
\begin{equation}
\label{deltaEquation}
 \mathcal{P}_q(\delta_q)=0
\end{equation}
in $\End(J(C_{f,q}))$. 
Then \eqref{deltaEquation} gives rise to the  ring homomorphism,
\begin{equation}
\label{embedQ}
{\bf i}_{q,f}: \Z[t]/\mathcal{P}_q(t)\Z[t] \hookrightarrow \Z[\delta_q] \subset \End(J(C_{f,q})), \ t+\mathcal{P}_q(t)\Z[t]  \mapsto \delta_q,
\end{equation}
which is a {\sl ring embedding} (\cite[p. 149]{Poonen}, \cite[p. 448]{SPoonen}, 
\cite[Lemma 4.8]{ZarhinM}). (The first map in \eqref{embedQ} is actually a ring isomorphism.)
 This implies  that the subring $\Z[\delta_q]$ of $\End(J(C_{f,q}))$ generated by $\delta_q$
is isomorphic to $\Z[t]/\mathcal{P}_q(t)\Z[t]$. It follows that  the  $\Q$-subalgebra 
\begin{equation}
\label{Qdeltaq}
\Q[\delta_q] \subset\End^0(J(C_{f,q}))
\end{equation}
 generated by $\delta_q$ has $\Q$-dimension $q-1$, 
is isomorphic to 
$$\Q[t]/\mathcal{P}_q(t)\Q[t]\cong \prod_{i=1}^r \Q(\zeta_{p^i})$$
and therefore has dimension $q-1$.

We will need the following elementary observation.

\begin{rem}
\label{pNdiv}
\begin{itemize}
\item[(i)]
Suppose that a prime $p$ is greater than $n$. Then $p$ does {\sl not} divide $n!$. Since every subgroup $H$ of $\Gal(f)$ is isomorphic to a subgroup of $\mathbf{S}_n$,   its order $|H|$ divides $n!$ and therefore is {\sl not} divisible by $p$. Hence, if $p>n$, then  $|H|$  is {\sl not} divisible by $p$.
\item[(ii)]
Suppose that $H$ is a transitive subgroup of $\Gal(f)$ with respect to the action on the roots of $f(x)$.  Then its order $|H|$ is divisible by $n$.
\end{itemize}
\end{rem}

Let us formulate our main results.

First, we start with the case $q=p$. Then $\mathcal{P}_p(t)$ coincides with  $\Phi_p(t)$ and there is a natural ring isomorphism
$$\Z[t]/\mathcal{P}_p(t)\Z[t] \cong \Z[\zeta_p]$$
that sends (the coset of) $t$ to $\zeta_p$.
This gives us the the {\sl ring embedding}
\begin{equation}
\label{embedP}
{\bf i}_{p,f}: \Z[\zeta_p] \hookrightarrow \Z[\delta_p] \subset \End(J(C_{f,p})), \ \zeta_p  \mapsto \delta_p.
\end{equation}
Notice also that the rings $\Z[\delta_p]$ and $\Z[\zeta_p]$ are isomorphic.
\begin{thm}
\label{primeToP}
Let $n \ge 5$ be an integer and $p$ an odd prime such that   $K$ contains a primitive $p$th root of unity.

Suppose that the Galois group $\Gal(f)$ of $f(x)$ contains a subgroup $H$  that acts doubly transitively
on the  $n$-element set $\R_f$ of roots of the polynomial $f(x)$ and enjoys the following properties.
\begin{itemize}
\item[(i)]
 The index of every maximal subgroup of  $H$  does not divide $n-1$.
\item[(ii)]
 $p$   does not divide  $|H|$. (E.g., $p>n$.)
\end{itemize}
Then $\End^0(J(C_{f,p}))=\Q[\delta_p]$ and $\End(J^{(f,p)})=\Z[\delta_p]$.
\end{thm}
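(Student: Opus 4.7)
The plan is to compute $\End^0(J(C_{f,p}))$ by analysing the Galois action on the $\mathfrak{p}$-torsion $J(C_{f,p})[\mathfrak{p}]$, where $\mathfrak{p}=(1-\zeta_p)$ is the unique prime of $\Z[\zeta_p]$ above $p$, and then bootstrap to the integral statement using the fact that $\Z[\zeta_p]$ is the maximal order in $\Q(\zeta_p)$.

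First, transitivity of $H$ on $\R_f$ forces $n\mid |H|$, so hypothesis (ii) gives $p\nmid n$ and hence $\dim J(C_{f,p})=(p-1)(n-1)/2$. The embedding \eqref{embedP} identifies $\Z[\zeta_p]$ with the subring $\Z[\delta_p]\subset\End(J(C_{f,p}))$. Since $\Z[\zeta_p]$ is the maximal order of $\Q(\zeta_p)=\Q[\delta_p]$, any order in $\Q[\delta_p]$ containing it must equal it, and the integral conclusion reduces to the rational one; it therefore suffices to prove $\End^0(J(C_{f,p}))=\Q[\delta_p]$.

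Next, consider $V:=J(C_{f,p})[\mathfrak{p}]$. A standard calculation (via the $(1-\delta_p)$-action on rational homology, or via the ramification structure of the superelliptic cover over the roots of $f$) yields $\dim_{\F_p}V=n-1$. Because $K$ contains $\zeta_p$ and $f\in K[x]$, the $\Gal(\bar K/K)$-action on $V$ factors through $\Gal(f)$, and one identifies the restriction to $H$ with the sum-zero ``heart'' $\{x\in\F_p^{\R_f}:\sum_i x_i=0\}$ of the natural permutation representation (a direct summand because $p\nmid n$). Hypothesis (ii) together with Maschke gives that $\F_p[H]$ is semisimple, and double transitivity then forces $V$ to be absolutely irreducible as an $\F_p[H]$-module.

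Hypothesis (i) is then used to upgrade absolute irreducibility to \emph{very simplicity} in the sense of Zarhin: no $H$-stable $\bar\F_p$-subalgebra of $\End_{\bar\F_p}(V\otimes\bar\F_p)$ lies strictly between the scalars and the full matrix algebra. The mechanism is that any such intermediate subalgebra would induce a non-trivial system of imprimitivity (or tensor decomposition) on the heart, producing a transitive $H$-action on a set of size $k>1$ with $k\mid (n-1)$, whose point-stabiliser lies in some maximal subgroup of $H$ of index dividing $n-1$, contradicting (i). Once very simplicity is in hand, Zarhin's criterion for centralisers of cyclotomic multiplication applies: any $\Z[\zeta_p]$-linear endomorphism of $J(C_{f,p})$ acts on $V$ commuting with the image of $H$ and is therefore scalar modulo $\mathfrak{p}$, and a standard lifting argument via the $\mathfrak{p}$-adic Tate module forces it into $\Z[\zeta_p]$ itself. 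This yields $\End^0(J(C_{f,p}))=\Q[\delta_p]$ and, combined with the reduction above, completes the proof. The main obstacle is precisely this passage from (i) to very simplicity -- translating the numerical hypothesis on indices of maximal subgroups of $H$ into the representation-theoretic non-existence of proper invariant subalgebras of $\End(V)$; this is the technical core of Zarhin's approach, and one expects the author to invoke his prior structural results on doubly transitive permutation modules rather than argue from scratch.
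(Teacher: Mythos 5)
Your overall strategy matches the paper's: identify the Galois module $J(C_{f,p})[\mathfrak p]$ with the heart $(\F_p^{\R_f})^0$ of the permutation representation (Theorem \ref{kereta}), use $p\nmid|H|$ together with double transitivity to get absolute irreducibility of that $\F_p[H]$-module, and then transfer representation-theoretic rigidity into a statement about the centralizer of $\delta_p$ in $\End^0(J(C_{f,p}))$. The reduction from the integral to the rational statement via maximality of $\Z[\zeta_p]$ is also the same.

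The gap is in the step from hypothesis (i) to \emph{very} simplicity of the heart. This implication is false. Hypothesis (i) only rules out one of the two ways a $G$-normal subalgebra can be non-obvious. If $R\subset\End_{\F_p}(V)$ is $G$-normal, Lemma \ref{74} splits the analysis into two branches: either $V$ fails to be $R$-isotypic, which produces a proper subgroup of index dividing $N=n-1$ and is excluded by (i); or $V$ is isotypic, in which case $R\cong\Mat_a(\F_p)$ and $\End_R(V)\cong\Mat_b(\F_p)$ with $ab=N$, and hypothesis (i) only forces the \emph{center} of $R$ to be $\F_p$ (this is exactly Proposition \ref{centralsimpleIndex}), not $R$ to be trivial or everything. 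The tensor-decomposition case $a,b>1$ survives (i). A concrete counterexample to your claim is already in the paper: $n=5$, $H=\mathbf{A}_5$, $p>5$ with $p\equiv\pm1\bmod 5$. Every maximal subgroup of $\mathbf{A}_5$ has index $5$, $6$, or $10$, none of which divide $N=4$, so (i) holds; yet by Remark \ref{A5notVery} and Theorem \ref{A5}(ii)--(iii) the heart is \emph{not} very simple because it factors as $\bar V_1\otimes_{\F_p}\bar V_2$ with $\dim\bar V_i=2$. This is precisely the erratum case discussed in Remark \ref{errorA5} that motivated the paper's introduction of the weaker notion of central simplicity.

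The paper's actual proof therefore only establishes \emph{central} simplicity of the heart (Proposition \ref{centralsimpleIndex}) and then closes the gap with an extra arithmetic input that you did not use: since $H$ is doubly transitive, $n(n-1)\mid|H|$, so hypothesis (ii) gives $p\nmid(n-1)$ as well as $p\nmid n$. The condition $p\nmid(n-1)$ feeds into Remark \ref{ssLambda}(iv) (based on the weight multiplicities $n_{\sigma_i}=[ni/p]$ and their gcd being $1$), which shows that if the centralizer $\Lambda_\Q$ of $\Q[\delta_p]$ is a central simple $\Q[\delta_p]$-algebra of dimension $d^2$ then $d$ must divide $1$, hence $\Lambda_\Q=\Q[\delta_p]$. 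This is the content of Theorems \ref{handysup}(ii) and \ref{SimpleDT}, which the paper's proof of Theorem \ref{primeToP} invokes. Your argument is missing this numerical input, and without it the claimed very simplicity cannot be rescued; you would need either to restrict to $n\ne 5$, or to insert the weight-multiplicity step in place of very simplicity.

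A minor further point: you need $p\nmid(n-1)$ not just $p\nmid n$; deriving it from (ii) requires invoking \emph{double} transitivity (so that $n(n-1)$ divides $|H|$), not mere transitivity, and this should be stated.
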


\begin{thm}
\label{endo}
Let $K$ be a subfield of $\C$ such that all the coefficients of $f(x)$
lie in $K$. Assume also that $f(x)$ is an irreducible polynomial in $K[x]$
of degree $n \ge 5$
 and its Galois group over $K$ is either the full symmetric group $\Sn$ or
the alternating group $\An$.
Then
$$ \End(J(C_{f,p}))=\Z[\delta_p] \cong\Z[\zeta_p].$$
\end{thm}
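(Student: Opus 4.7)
The plan is to deduce Theorem \ref{endo} from Theorem \ref{primeToP} applied with $H = \An$, assuming $p > n$ so that $p$ does not divide $|\An|$ (cf.\ Remark \ref{pNdiv}(i)). Since $\End(J(C_{f,p}))$ consists of $\C$-endomorphisms, it is insensitive to the field of definition, so I would freely replace $K$ by the larger field $K' := K(\zeta_p)$; the hypotheses of Theorem \ref{primeToP} will then be verified for the Galois group over $K'$.

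First I would check that $\Gal(f/K') \supseteq \An$. Let $L$ be the splitting field of $f$ over $K$. Because $K'/K$ is abelian, so is $(L \cap K')/K$, and consequently $\Gal(L/L\cap K') \cong \Gal(LK'/K')$ is a normal subgroup of $\Gal(L/K)$ with abelian quotient. For $n \ge 5$ the commutator subgroup of $\Sn$ is $\An$, and $\An$ itself is simple, so the only normal subgroups with abelian quotient are $\An$ and the whole group. Hence $\An \subseteq \Gal(f/K')$ in both cases $\Gal(L/K) = \Sn$ and $\Gal(L/K) = \An$.

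Next I would apply Theorem \ref{primeToP} with $H = \An$. The group $\An$ is doubly transitive on the $n$ roots of $f$ for $n \ge 4$. For hypothesis (i), I would invoke the classical fact that, for $n \ge 5$, every proper subgroup of $\An$ has index at least $n$ (the minimal faithful permutation degree of $\An$ is $n$); thus every maximal subgroup of $\An$ has index $\ge n > n-1$, and therefore cannot divide $n-1$. Hypothesis (ii) is Remark \ref{pNdiv}(i) combined with the assumption $p > n$. Theorem \ref{primeToP} then yields $\End(J(C_{f,p})) = \Z[\delta_p]$, and the embedding \eqref{embedP} identifies this ring with $\Z[\zeta_p]$.

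The main point requiring attention is the group-theoretic input in hypothesis (i), namely the classical lower bound $n$ on the index of any proper subgroup of $\An$ (for $n \ge 5$). Everything else is routine: the field-of-definition reduction is elementary Galois theory, and the final identification of $\Z[\delta_p]$ with $\Z[\zeta_p]$ is already built into \eqref{embedP}.
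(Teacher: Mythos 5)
Your reduction to Theorem~\ref{primeToP} is logically valid, the enlargement of $K$ to $K(\zeta_p)$ is correct (and is exactly what the paper does via the simplicity of $\Alt(\R_f)$), and the group-theoretic input -- that every proper subgroup of $\An$ has index at least $n > n-1$ for $n\ge 5$ -- is accurate and correctly verifies hypothesis~(i) of Theorem~\ref{primeToP}. The deduction is also non-circular, since Theorem~\ref{primeToP} rests only on Remark~\ref{DoubleprimeToP}, Theorem~\ref{SimpleDT}, and Theorem~\ref{handysup}.

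However, there is a genuine gap in coverage: you prove the statement only under the extra assumption $p > n$, which is not part of the hypotheses of Theorem~\ref{endo} (the theorem is for an arbitrary odd prime $p$). Hypothesis~(ii) of Theorem~\ref{primeToP} requires $p \nmid |H|$; with $H = \An$ this forces $p > n$, so for odd primes $p \le n$ (e.g.\ $n=6$, $p=5$, or $n=7$, $p=7$) your argument simply does not apply. The paper handles all odd $p$ by going through Theorem~\ref{handysup} instead of Theorem~\ref{primeToP}: it uses Theorem~\ref{A5} to show the $\Alt(\R_f)$-module $(\F_p^{\R_f})^{00}$ is \emph{very simple} when $n>5$ or $p\le 5$, and \emph{central simple} with $p \nmid (n-1)$ in the remaining case $n=5$, $p>5$. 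The crucial difference is that the absolute simplicity of the heart when $p \mid |\An|$ rests on Dickson's theorem (Lemma~\ref{Anp}), not on the characteristic-zero lifting argument of Remark~\ref{DoubleprimeToP}, which is all that Theorem~\ref{primeToP} has access to. To complete your proof you would need a separate argument for $p \le n$, which in effect is the content of Theorem~\ref{A5} and Theorem~\ref{handysup}.
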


\begin{rem}
\label{errorA5}
Theorem \ref{endo} was stated in \cite[Th. 4.2]{ZarhinCambridge}. Its proof was based on an assertion that  
a certain  ``permutational'' representation $(\F_p^B)^{00}$ (that is called the {\sl heart}
\footnote{See \cite{Mortimer,Klemm} and Section \ref{permute} below for the definition of the {\sl heart}.})
of the alternating group $\Alt(B)=\An$ over $\F_p$ is  {\sl very simple}
 \footnote{See \cite{ZarhinM,ZarhinClifford} and Section \ref{verySR} below for the definition and basic properties of very simple representations.}
  \cite[Th. 4.7]{ZarhinCrelle}. Unfortunately, 
there is an error in the proof of \cite[Th. 4.7]{ZarhinCrelle}  when $n=5, p>5$,  caused by an improper use of \cite[Cor. 4.4]{ZarhinCrelle} (see \cite[p. 108, lines 4-5]{ZarhinCrelle}).
So, the proof in \cite{ZarhinCambridge} works only under an additional assumption that either $n>5$ or $p \le 5$.

In this note we handle the remaining case when $n=5, \ p>5$. It turns out that if $p \not \equiv \pm 1 \ \bmod 5$
then the representation of the group $\mathbf{A}_5$ is very simple, which  allows us to salvage in this case the  arguments of \cite{ZarhinCrelle}.

However,  if $p\equiv \pm 1 \ \bmod 5$ then  the $4$-dimensional representation
$\left(\F_p^B\right)^{00}$ of  $\mathbf{A}_5$, viewed as the representaton of the covering group $\mathrm{SL}(2,\F_5)$, splits into a tensor
product of two $2$-dimensional representations. In particular, $\left(\F_p^B\right)^{00}$ is {\sl not} very simple, and we use a notion of a {\sl central simple representation}
(see Section \ref{verySR} below), in order 
to prove Theorem \ref{endo} in this case.
\end{rem}

\begin{rems}
\label{ribcom}

If $f(x) \in K[x]$ then the curve
$C_{f,p}$ and its Jacobian $J(C_{f,p})$ are defined over $K$. Let $K_a\subset \C$ be the algebraic closure of $K$. 
Then all endomorphisms of $J(C_{f,p})$ are defined over $K_a$. Hence,  
 in order to prove Theorems \ref{endo} and \ref{SimpleDT}, it suffices to
check that the ring of all $K_a$-endomorphisms of  $J(C_{f,p})$ coincides with
 $\Z[\delta_p]$. 
 \end{rems}
 
 Now let us try to relax the restrictions on $p$, keeping the double transitivity of $\Gal(f)$. 
  Our next result deals with doubly transitive sporadic simple  (Galois) groups,
 whose description may be found in \cite{PraegerS}, \cite[Ch. 6 and Ch. 7, Sect. 7.7, p. 252-253]{DM}.

 \begin{thm}
 \label{mathieu}
 Let $p$ be an odd prime and $$\Gal(f)\subset \mathrm{Perm}(\R_f) \cong \mathbf{S}_n$$  a  permutation group that  acts doubly transitively on  the $n$-element set $\R_f$.
 Suppose that $(n,\Gal(f))$  enjoys one of the following properties.
 \begin{itemize}
 \item[({\bf M})]
 $n \in \{11,12,22,23,24\}$,  and $\Gal(f)$
 is isomorphic to the corresponding Mathieu group $\mathbf{M}_n$. 
 If $n=11$ then we assume additionally that $p>3$.
 \item[({\bf HS})]
 $n=176, \ p>7$, and $\Gal(f))$ is isomorphic to the sporadic simple Higman-Sims group $\mathrm{HS}$.
 \item[({\bf CO3})]
 $n=276$,   $p \not\in  \{3,5,11\}$, and $\Gal(f))$  is isomorphic to the sporadic simple Conway group $\mathrm{Co}_3$.
 \end{itemize}
 Then $\End^0(J(C_{f,p}))=\Q[\delta_p]$ and $\End(J(C_{f,p}))=\Z[\delta_p]$.
\end{thm}

\begin{rem}
The case ({\bf M}) of Theorem \ref{mathieu} gives a (partial) answer to a question of Ravi Vakil that was asked during my talk at 
Simons  Symposium ``Geometry Over Non-closed Fields'' (Puerto Rico, March 2015).
 \end{rem}

Now let us discuss the case when our Galois groups are doubly transitive finite simple {\sl Chevalley groups} - they are classified in \cite{Curtis} and their action described in details
in \cite[Sect. 7.7]{DM}. (For general results about Chevalley groups see \cite{Steinberg}.) 
  
   \begin{thm}
 \label{PSL2}
 Suppose that $p$ be an odd prime and $\Gal(f)$ contains a subgroup $H$ that  acts doubly transitively on  the $n$-element set $\R_f$ and  is isomorphic to a finite Chevalley group $\mathfrak{G}(\mathfrak{q})$, and the corresponding stabilizers correspond to  Borel subgroups of $\mathfrak{G}(\mathfrak{q})$, which are maximal subgroups of index $n$.
 
 Suppose that $(n,\mathfrak{G}(\mathfrak{q}),p)$  enjoys one of the following properties.
 
 \begin{itemize}
 \item[({\bf L2})]
  Let $\ell$ be a prime and $\mathfrak{r}$ a positive integer.
 Then $n=\mathfrak{q}+1$ where $\mathfrak{q}=\ell^{\mathfrak{r}}>11$ and
 $\mathfrak{G}(\mathfrak{q})$ is the projective special linear group $$\mathbf{L}_2(\mathfrak{q})=\mathrm{PSL}(2,\F_{\mathfrak{q}})$$
  where $\F_{\mathfrak{q}}$ is a finite $\mathfrak{q}$-element field.
Assume additionally that either $p \ne \ell$ or $\mathfrak{q}=\ell=p$.
 \item[({\bf Lmq})]
  Let $m \ge 3$ be an integer, $\ell$  a prime, $\mathfrak{r}$ a positive integer.
 Then $n=(\mathfrak{q}^m-1)/(\mathfrak{q}-1)$ where $\mathfrak{q}=\ell^{\mathfrak{r}}$ and
 $\mathfrak{G}(\mathfrak{q})$ is the projective special linear group $$\mathbf{L}_m(\mathfrak{q})=\mathrm{PSL}(m,\F_{\mathfrak{q}})$$
  where $\F_{\mathfrak{q}}$ is a finite $\mathfrak{q}$-element field.
Assume additionally that  $p \ne \ell$ and
$$(m, \mathfrak{q}) \ne (3,2), (3,4), (4,2), (4,3), (6,2), (6,3).$$

 \item[({\bf U3})]
 Let $\ell$ be a prime and $\mathfrak{r}$ a positive integer.
 Then $n=\mathfrak{q}^3+1$ where $\mathfrak{q}=\ell^{\mathfrak{r}}$ is a power  of a prime $\ell$,
 $$\mathfrak{q}\ne 2,5$$
and $\mathfrak{G}(\mathfrak{q})$ is  the projective special unitary group $\mathbf{U}_3(q)=\mathrm{PSU}_3(\F_{q})$.
\item[({\bf Sz})] Let $\mathfrak{r}$ be a positive integer, 
$$\mathfrak{q}=2^{2\mathfrak{r}+1}, \quad n=q^2+1, \ m =2^{\mathfrak{r}+1}.$$ 
 Then $H$ is the simple Suzuki group $\mathrm{Sz}(\mathfrak{q})=^2\mathrm{B}_2(q)$. In addition,  $p$ does not divide $(q+1+m)$.
 \item[({\bf Ree})]
 Let $\mathfrak{r}$ be a positive integer, 
$$\mathfrak{q}=3^{2\mathfrak{r}+1}, \quad n=q^2+1, \ m =3^{\mathfrak{r}+1}.$$ 
The group $H$ is the simple Ree group $\mathrm{Ree}(\mathfrak{q})=^2\mathrm{G}_2(q)$. In addition, 
$p$ does not divide $3(q+1)(q+m+1)(q-m+1)$.
 \end{itemize}
 Then $\End^0(J(C_{f,p}))=\Q[\delta_p]$ and $\End(J(C_{f,p}))=\Z[\delta_p]$.
 
\end{thm}

Now let us assume that $r$ is any positive integer (recall that $q=p^r$). In this case we obtain the results about the {\sl endomorphism algebra} $\End^0(J(C_{f,q}))=\End(J(C_{f,q}))\otimes \Q$
of $J(C_{f,q})$
that may be viewed as analogues of Theorems \ref{endo} and Theorem \ref{primeToP} for the endomorphism algebra  $\End^0(J(C_{f,q})$.

\begin{thm}
\label{primeToQ}
Suppose that $n \ge 5$ is an integer, $p$ an odd prime, $q$ divides $n$, and  $K$ contains a primitive $q$th root of unity.

Let us assume that the Galois group $\Gal(f)$ of $f(x)$ contains a subgroup $H$  that acts doubly transitively
on the  $n$-element set $\R_f$ of roots of the polynomial $f(x)$ and enjoys the following properties.
\begin{itemize}
\item[(i)]
 The index of every maximal subgroup of  $H$  does not divide $n-1$.
\item[(ii)]
 $p$   does not divide  $|H|$. (E.g., $p>n$.)
\end{itemize}

Then 
$$\End^0(J(C_{f,q}))=\Q[\delta_q]\cong \prod_{i=1}^r \Q(\zeta_{p^i}).$$
 \end{thm}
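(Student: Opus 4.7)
The plan is to deduce Theorem \ref{primeToQ} from Theorem \ref{primeToP} via the isogeny decomposition of $J(C_{f,q})$ induced by the factorization $\mathcal{P}_q(t)=\prod_{i=1}^{r}\Phi_{p^i}(t)$ into pairwise coprime cyclotomic polynomials over $\Q$. Because $\Q[\delta_q]\cong\prod_{i=1}^{r}\Q(\zeta_{p^i})$ is a product of fields, the corresponding orthogonal idempotents $e_i\in\Q[\delta_q]\subset\End^0(J(C_{f,q}))$ yield
$$J(C_{f,q})\;\sim\;\prod_{i=1}^{r}Y_i,$$
where $\delta_q$ acts on $Y_i$ as a primitive $p^i$-th root of unity. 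It then suffices to prove that $\End^0(Y_i)=\Q(\zeta_{p^i})$ for each $i$ and that $\Hom^0(Y_i,Y_j)=0$ for $i\ne j$.

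To identify $Y_i$, observe that the natural covering $\pi_i:C_{f,q}\to C_{f,p^i}$, $(x,y)\mapsto(x,y^{p^{r-i}})$, has deck group generated by $\delta_q^{p^i}$, so pullback identifies $J(C_{f,p^i})$ up to isogeny with $J(C_{f,q})^{\delta_q^{p^i}}$; under the above decomposition this is precisely $\prod_{j\le i}Y_j$, because $\delta_q^{p^i}$ acts trivially on $Y_j$ exactly when $\Phi_{p^j}(t)\mid t^{p^i}-1$, i.e.\ when $j\le i$. Hence $Y_i$ is isogenous to the ``new'' quotient of $J(C_{f,p^i})$: the cokernel of the pullback $J(C_{f,p^{i-1}})\hookrightarrow J(C_{f,p^i})$, equivalently the $\Phi_{p^i}(\delta_{p^i})=0$ isogeny factor of $J(C_{f,p^i})$. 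For $i=1$ this gives $Y_1\sim J(C_{f,p})$; since our hypotheses on $H$ and $p$ match those of Theorem \ref{primeToP} and $K\supset\Q(\zeta_q)\supset\Q(\zeta_p)$, Theorem \ref{primeToP} immediately yields $\End^0(Y_1)=\Q(\zeta_p)$.

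For $i\ge 2$ one must extend the same calculation to the new quotient of $J(C_{f,p^i})$. The argument parallels the proof of Theorem \ref{primeToP}: fix a prime $\mathfrak{p}$ of $\Z[\zeta_{p^i}]$ above $p$; then the Galois module $Y_i[\mathfrak{p}]$ is, after a twist, a subquotient of the heart $(\F_p^{\R_f})^{00}$ of the permutation $\F_p$-representation of $H$ on $\R_f$, which is semisimple because $p\nmid|H|$. The double transitivity together with the index condition on maximal subgroups of $H$ is exactly what is needed---via the central-simple-representation machinery of Section \ref{verySR}---to conclude that the heart is irreducible with scalar commutant, forcing $\End^0(Y_i)$ to coincide with the already-present $\Q(\zeta_{p^i})$. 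Once each $\End^0(Y_i)$ is a field, $Y_i$ is automatically simple (a commutative matrix algebra $M_m(D)$ requires $m=1$), and since the fields $\Q(\zeta_{p^i})$ have pairwise distinct $\Q$-dimensions $(p-1)p^{i-1}$ they are pairwise non-isomorphic, so the simple factors $Y_i$ are pairwise non-isogenous and $\Hom^0(Y_i,Y_j)=0$ for $i\ne j$. Assembling everything,
$$\End^0(J(C_{f,q}))\;=\;\prod_{i=1}^{r}\End^0(Y_i)\;=\;\prod_{i=1}^{r}\Q(\zeta_{p^i})\;=\;\Q[\delta_q].$$

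The principal obstacle is the $i\ge 2$ case of the endomorphism calculation. The proof of Theorem \ref{primeToP} only addresses the bottom layer; at higher levels the $p^i$-torsion of $Y_i$ is filtered by powers of the (totally ramified) prime $\mathfrak{p}=(1-\zeta_{p^i})$ above $p$, and this filtration must be unwound carefully before the heart $(\F_p^{\R_f})^{00}$ is seen to govern the residual Galois module $Y_i[\mathfrak{p}]$ and the central-simple-representation argument of Section \ref{verySR} can be invoked as in the prime case.
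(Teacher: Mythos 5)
Your overall architecture matches the paper's: decompose $J(C_{f,q})$ up to isogeny into the factors you call $Y_i$ (the paper's $J^{(f,p^i)}$, see Remark \ref{factorQ}), compute $\End^0(Y_i)$ for each $i$, and then argue the factors are pairwise non-isogenous because their endomorphism fields $\Q(\zeta_{p^i})$ have distinct $\Q$-dimensions. Your identification of $Y_i$ as the ``new'' piece of $J(C_{f,p^i})$ is correct, and your argument that a field-valued $\End^0$ forces simplicity is also sound. So far so good.

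The trouble is that the heart of the matter, $\End^0(Y_i)=\Q(\zeta_{p^i})$ for $i\ge 2$, is acknowledged as a gap rather than proved, and the way you gesture at filling it is off on two counts. First, you propose that $Y_i[\mathfrak{p}]$ is ``after a twist, a subquotient'' of $(\F_p^{\R_f})^{00}$. In fact the paper records something cleaner and stronger: for $\zeta_{p^i}\in K$, the $\Gal(K)$-module $\ker(1-\delt_{p^i})$ is \emph{isomorphic} to $V_{f,p}=(\F_p^{\R_f})^{00}$ as a $\Gal(K)$-module (Remark \ref{factorQ}(v), citing \cite[Lemma 4.11]{ZarhinM} and \cite[Th. 9.1]{ZarhinGanita}); no twist, no passage to a subquotient, and in particular no filtration to unwind beyond the uniformizer itself. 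Without this exact identification, the central-simple machinery cannot be turned on at level $i$, and without an argument for it you cannot invoke Theorem \ref{primeToP} ``as in the prime case,'' because Theorem \ref{primeToP} is literally a statement about $J(C_{f,p})$, not about the new piece of $J(C_{f,p^i})$.

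Second, even granting the central-simplicity of the module, you claim this ``forces $\End^0(Y_i)$ to coincide with $\Q(\zeta_{p^i})$.'' It does not, on its own: central simplicity of the residual representation only gives (via Lemma \ref{redMatrix}, Proposition \ref{normalLambdaDim}) that the centralizer $\Lambda_\Q$ of $\Q[\delt_{p^i}]$ is a central simple $\Q(\zeta_{p^i})$-algebra of some dimension $d^2$; pinning down $d=1$ requires the weight-multiplicity analysis on $\Omega^1$ (Lemma \ref{divOmega1} together with Remark \ref{factorQ}(vii): since $p\nmid|H|$ and $H$ is doubly transitive, $p\nmid n(n-1)$, hence the $\gcd$ of the weight multiplicities is $1$). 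That ingredient is entirely missing from your sketch. The paper packages all of this into Theorem \ref{SimpleDTnew} and then runs the dimension count you outline; your proposal describes the skeleton correctly but leaves the two load-bearing lemmas unsupplied.
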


\begin{thm}
\label{endoQ}
Let $K$ be a subfield of $\C$ such that all the coefficients of $f(x)$
lie in $K$. Suppose that $f(x)$ is an irreducible polynomial in $K[x]$
of degree $n \ge 5$
 and its Galois group over $K$ is either the full symmetric group $\Sn$ or
the alternating group $\An$. Assume also that either $p$ does not divide $n$ or $q$ divides $n$.
Then
$$\End^0(J(C_{f,q}))=\Q[\delta_q] \cong \prod_{i=1}^r \Q(\zeta_{p^i}).$$

\end{thm}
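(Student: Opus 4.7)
The plan is to prove Theorem~\ref{endoQ} by combining Theorem~\ref{endo} with the $\delta_q$-isotypic decomposition of $J(C_{f,q})$ induced by the factorization $\mathcal{P}_q(t) = \prod_{i=1}^r \Phi_{p^i}(t)$ in $\Q[t]$. The embedding \eqref{embedQ} already realises $\Q[\delta_q] \cong \prod_{i=1}^r \Q(\zeta_{p^i})$ inside $\End^0(J(C_{f,q}))$, so only the reverse inclusion remains. Applying the primitive idempotents of $\Q[\delta_q]$ produces an isogeny decomposition
\[
J(C_{f,q}) \sim \prod_{i=1}^r J^{(i)},
\]
on which $\delta_q$ acts on $J^{(i)}$ as a primitive $p^i$-th root of unity (equivalently, $\Phi_{p^i}(\delta_q)$ annihilates $J^{(i)}$).

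First I would check that $\Hom^0(J^{(i)}, J^{(j)}) = 0$ for $i \neq j$: any simple factor $A$ of $J^{(i)}$ has $\delta_q|_A$ with minimal polynomial the irreducible $\Phi_{p^i}(t)$, and the corresponding statement for $J^{(j)}$ gives the different irreducible $\Phi_{p^j}(t)$, so no simple $A$ can lie in both isotypic pieces. Hence
\[
\End^0(J(C_{f,q})) = \bigoplus_{i=1}^r \End^0(J^{(i)}),
\]
and it suffices to show $\End^0(J^{(i)}) = \Q(\zeta_{p^i})$ for each~$i$. Via the $\delta$-equivariant cover $C_{f,q} \to C_{f,p^i}$, $(x,y) \mapsto (x, y^{p^{r-i}})$ (which pushes $\delta_q$ forward to $\delta_{p^i}$), a dimension count using the two genus formulas furnished by the hypothesis ``$p \nmid n$ or $q \mid n$'' identifies $J^{(i)}$, up to isogeny, with the ``new'' part of $J(C_{f,p^i})$, i.e.\ the complement of the pullback of $J(C_{f,p^{i-1}})$. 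In particular $J^{(1)} \sim J(C_{f,p})$, and for $i=1$ the required equality $\End^0(J^{(1)}) = \Q(\zeta_p)$ is exactly Theorem~\ref{endo}.

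For $i>1$ I would repeat the representation-theoretic argument underlying Theorem~\ref{endo}: analyse the $\Gal(K_a/K)$-module $J^{(i)}[\lambda]$, where $\lambda$ is the prime of $\Z[\zeta_{p^i}]$ over $p$, and invoke the central-simple structure of the heart of the $\Sn$- or $\An$-action on this module to force $\End^0(J^{(i)})$ to coincide with $\Q(\zeta_{p^i})$. The principal obstacle is precisely this last step for $i>1$: one must extend, uniformly to every $\lambda$-torsion module $J^{(i)}[\lambda]$, the central-simplicity verification that the paper carries out for the prime-order case $J^{(1)}[p]$ in order to repair the $n=5,\ p\equiv\pm 1\ \bmod 5$ gap in~\cite{ZarhinCambridge}. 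Once central simplicity has been established in this form, the standard commutant argument combined with the vanishing of cross-Hom groups collapses $\bigoplus_i \End^0(J^{(i)})$ onto $\prod_{i=1}^r \Q(\zeta_{p^i}) = \Q[\delta_q]$, which is the content of Theorem~\ref{endoQ}.
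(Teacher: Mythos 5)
Your overall strategy --- split $J(C_{f,q})$ along the idempotents of $\Q[\delta_q]$, kill cross-homomorphisms, and show each piece has cyclotomic endomorphism algebra --- is the paper's strategy, but the order of your two middle steps is inverted and this creates a genuine gap. You argue that $\Hom^0(J^{(i)},J^{(j)})=0$ for $i\ne j$ directly from the fact that $\delta_q$ has different minimal polynomials $\Phi_{p^i}$ and $\Phi_{p^j}$ on the two pieces. This does not follow: a homomorphism $J^{(i)}\to J^{(j)}$ has no reason to intertwine the two $\delta_q$-actions, so the minimal-polynomial distinction does not preclude a common simple isogeny factor. For instance, both $\Q(\zeta_{p^i})$ and $\Q(\zeta_{p^j})$ can perfectly well embed in $\Mat_m(\End^0(A))$ for the same simple $A$. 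What actually kills the cross-Homs in the paper is established \emph{first}: Theorem~\ref{SimpleDTnew} shows that each $J^{(f,p^i)}$ is \emph{absolutely simple} with $\End^0 \cong \Q(\zeta_{p^i})$, and then two simple abelian varieties with non-isomorphic endomorphism algebras cannot be isogenous. You have the implication backwards.

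Two further points. For $i>1$ you say one must "extend, uniformly to every $\lambda$-torsion module $J^{(i)}[\lambda]$, the central-simplicity verification", as though new representation-theoretic work is required at each level. In fact the paper's Remark~\ref{factorQ}(v) shows $\ker(1-\delt_q)\cong V_{f,p}$ as a $\Gal(K)$-module \emph{for every} $q=p^i$; the Galois module is literally the same one, so the central-simple (or very-simple) input established for the prime case $q=p$ transfers verbatim, with no extension needed. Finally, your sketch does not address the hypothesis "$q\mid n$": the clean isogeny $J(C_{f,q})\sim\prod_i J^{(f,p^i)}$ from Remark~\ref{factorQ}(iv) is only available when $p\nmid n$, and the paper reduces the $q\mid n$ case to the $p\nmid n$ case via the substitution $x_1=1/(x-\alpha)$, $y_1=y/(x-\alpha)^m$, which realises $J(C_{f,q})$ as the jacobian of $C_{h_1,q}$ with $\deg h_1 = n-1$ prime to $p$. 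Without that reduction your argument covers only half the stated theorem.
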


\begin{rem}
\label{errorA5bis}
Theorem \ref{endoQ} was stated in \cite[Th. 1.1]{ZarhinM}. 
Similarly (see Remark \ref{errorA5}), 
the proof in \cite{ZarhinM} works only under an additional assumption that either $n>5$ or $p \le 5$. In this paper we handle the remaining case $n=5, p>5$.
\end{rem}

\begin{rem}
An analogue of  Theorem \ref{endoQ} when $p \mid n$ but $q$ does {\sl not} divide $n$ was proven in \cite{Jiangwei}.

\end{rem}

The paper is organized as follows. In Section \ref{weightS} we discuss complex abelian varieties  $Z$ with multiplications from cyclotomic fields, paying special attention 
to   the centralizers of these fields in $\End^0(Z)$ and their action on the differentials of the first kind when $Z$ is a superelliptic Jacobian In Section \ref{permute}
we discuss modular representations of permutation groups, paying a special attention to the {\sl hearts} of these representations.
In Section \ref{verySR} we introduce central simple representations and recall basic properties of very simple representations that first appeared in \cite{ZarhinM,ZarhinClifford},
paying a special attention to the very simplicity and central simplicity of hearts of permutational representations in the case of doubly transitive permutation groups.
In Section \ref{cyclotomicA} we return to our discussion of abelian varieties  $Z$ with multiplications from cyclotomic fields $\Q(\zeta_q)$, paying a special attention
to the Galois properties of the set of $\delta_q$-invariants. In Section \ref{Prelim} we review results of \cite{ZarhinCambridge} about endomorphism algebras
of superelliptic Jacobians. In Section \ref{mainPproof} we prove our main results that deal with the case $q=p$. The proofs for the case of arbitrary $q$ are contained in 
Section \ref{mainQproof}.

{\bf Acknowledgements}. I am grateful to the referee, whose comments helped to improve the exposition.

\section{Endomorphism fields of abelian varieties and their centralizers}
\label{weightS}
In what follows $E$ is a number field and $O_E$ the ring of algebraic integers in $E$. It is well known that $O_E$
is a Dedekind ring and therefore every finitely generated torsion-free $O_E$-module is projective/locally free and isomorphic 
to a direct sum of locally free $O_E$-modules of rank $1$. In addition, the natural map
$$O_E\otimes \Q \to E, \ e\otimes c \mapsto c\cdot e \ \forall e\in E, \ c \in \Q$$
is an isomorphism of $\Q$-algebras.

Let $Z$ be an abelian variety over $\C$ of positive dimension $g$, let  $\End(Z)$ be the ring of its endomorphisms.
  If $n$ is an integer then we we write $n_Z$ for the endomorphism
$$n_Z: Z \to Z, \ z \mapsto nz.$$
Clearly, $n_Z \in \End(Z)$. By definition, $1_Z$ is the identity selfmap of $Z$. In addition,
$n_Z: Z \to Z$ is an {\sl isogeny} if and only if $n \ne 0$.

We write 
$$\End^0(Z)=\End(Z)\otimes\Q$$
for the corresponding endomorphism algebra of $Z$, which is a finite-dimensional semisimple $\Q$-algebra. Identifying $=\End(Z)$ with 
$$\End(Z)\otimes 1\subset \End(Z)\otimes\Q=\End^0(Z),$$
we may view $\End(Z)$ as an {\bf order} in the $\Q$-algebra $\End^0(Z)$.

The action of $\End(Z)$  by functoriality  on the $g$-dimensional complex vector space $\Omega^1(Z)$ of differentials of the first kind on $Z$ 
 gives us the ring homomorphism  $\End(Z) \to  \End_{\C}(\Omega^1(Z)$ \cite[Ch. 1, Sect. 2.8]{Shimura},
which extends by $\Q$-linearity to the homomorphism of $\Q$-algebras
\begin{equation}
\label{dif1kind}
j_Z:\End^0(Z)\to \End_{\C}(\Omega^1(Z)),
\end{equation}
which sends $1_Z$ to the identity automorphism of the $\C$-vector space $\Omega^1(Z)$.
Let $E$ be a  number field that is a $\Q$-subalgebra
 of $\End^0(Z)$ with the same $1=1_Z$. Let $\Sigma_E$ be the $[E:\Q]$-element set of field embeddings $\sigma: E  \hookrightarrow \C$.
Let us define for each $\sigma\in \Sigma_E$ the corresponding {\sl weight subspace}
$$\Omega^1(Z)_{\sigma}=\{\omega \in \Omega^1(Z)\mid j_Z(e)\omega=\sigma(e)\omega \ \forall e \in E\}\subset \Omega^1(Z).$$
The well known splitting
$$E\otimes_{\Q}\C =\oplus_{\sigma\in \Sigma_E}E\otimes_{E,\sigma}\C=\oplus_{\sigma}\C_{\sigma} \ \text{ where  }\C_{\sigma}=E\otimes_{E,\sigma}\C=\C$$
implies that
$$\Omega^1(Z)=\oplus_{\sigma\in \Sigma_E}\Omega^1(Z)_{\sigma}.$$
Let us put
$$n_{\sigma}=\dim_{\C}(\Omega^1(Z)_{\sigma}).$$

Let $D$ be the centralizer of $E$ in $\End^0(Z)$.  Clearly, $E$ lies in the center of $D$, which makes $D$ is a finite-dimensional $E$-algebra.
It is also clear that each subspace  $\Omega^1(Z)_{\sigma}$ is $j_Z(D)$-invariant, which gives us a $\Q$-algebra homomorphism
\begin{equation}
\label{difSigma}
j_{Z,\sigma}:D\to \End_{\C}(\Omega^1(Z)_{\sigma}),
\end{equation}
that sends $1=1_Z\in D$ to the identity automorphism of the $\C$-vector space 
$\Omega^1(Z)_{\sigma}$.

\begin{lem}
\label{divOmega1}
Let $D$ be as above. Suppose that $D$ is a central simple $E$-algebra of dimension $d^2$ where $d$ is a positive integer. Then: 

\begin{itemize}
\item[(i)]
$d$ divides
all the multiplicities $n_{\sigma}$.
In particular, if $n_{\sigma}=1$ for some $\sigma \in \Sigma_E$ then $d=1$ and $D=E$.
\item[(ii)]
Let $M$ be the the number of $\sigma$'s with $n_{\sigma}\ne 0$.
Then 
$$dM \le  \dim(Z).$$
In particular, if $d=2\dim(Z)/[E:\Q]$ then $M\le [E:\Q]/2$. 
\end{itemize}
\end{lem}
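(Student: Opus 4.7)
The plan is to exploit Wedderburn's theorem: since $D$ is central simple of dimension $d^2$ over $E$, after extending scalars along any embedding $\sigma: E\hookrightarrow \C$ it becomes a central simple $\C$-algebra of the same dimension, hence isomorphic to $\Mat_d(\C)$; all finite-dimensional modules over $\Mat_d(\C)$ have dimension divisible by $d$, and this applies directly to each weight space $\Omega^1(Z)_{\sigma}$.

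For part (i), I would observe first that by the very definition of the weight space, $E$ acts on $\Omega^1(Z)_{\sigma}$ through the embedding $\sigma$, i.e.\ $j_Z(e)$ acts as multiplication by the scalar $\sigma(e)\in \C$ for every $e\in E$. Since $D$ centralizes $E$, each element of $D$ preserves $\Omega^1(Z)_{\sigma}$, giving the $\C$-algebra homomorphism $j_{Z,\sigma}$ described in \eqref{difSigma}. The compatibility of the $E$- and $D$-actions lets us factor $j_{Z,\sigma}$ through the scalar extension
$$D\otimes_{E,\sigma}\C \longrightarrow \End_{\C}(\Omega^1(Z)_{\sigma}).$$
Because $D$ is central simple over $E$ of dimension $d^{2}$, the $\C$-algebra $D\otimes_{E,\sigma}\C$ is central simple over $\C$ of the same dimension $d^{2}$, and hence is isomorphic to $\Mat_d(\C)$ by the Artin--Wedderburn theorem. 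Every finite-dimensional module over $\Mat_d(\C)$ is a direct sum of copies of the unique simple module $\C^{d}$, so its $\C$-dimension is a multiple of $d$. Therefore $d\mid n_{\sigma}$ for every $\sigma\in\Sigma_E$. If $n_{\sigma}=1$ for some $\sigma$, then $d\mid 1$, forcing $d=1$ and $D=E$.

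For part (ii), I would simply add the dimensions of the weight subspaces. The direct sum decomposition $\Omega^1(Z)=\oplus_{\sigma\in\Sigma_E}\Omega^1(Z)_{\sigma}$ yields
$$\sum_{\sigma\in \Sigma_E} n_{\sigma}=\dim_{\C}\Omega^1(Z)=\dim(Z).$$
By (i), each nonzero $n_{\sigma}$ is a positive multiple of $d$, in particular $n_{\sigma}\ge d$. Summing over the $M$ indices $\sigma$ with $n_{\sigma}\ne 0$ gives $dM\le \dim(Z)$. In the special case $d=2\dim(Z)/[E:\Q]$, dividing by $d$ yields $M\le \dim(Z)/d=[E:\Q]/2$, as asserted.

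There is no real obstacle here; the one subtle point to verify carefully is the compatibility of the $E$-action on $\Omega^1(Z)_{\sigma}$ with the $E$-algebra structure on $D$ along the embedding $\sigma$, so that the map $D\to \End_{\C}(\Omega^1(Z)_{\sigma})$ really does descend to a map out of $D\otimes_{E,\sigma}\C$. Once that is in place, Artin--Wedderburn does the rest and both divisibility statements fall out immediately.
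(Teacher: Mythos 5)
Your proof is correct and follows essentially the same route as the paper: extend $j_{Z,\sigma}$ by scalars to $D\otimes_{E,\sigma}\C\cong \Mat_d(\C)$, use that every $\Mat_d(\C)$-module has dimension divisible by $d$ to get (i), and sum the $n_\sigma$ to get (ii). The extra care you take to check that $j_{Z,\sigma}$ really factors through $D\otimes_{E,\sigma}\C$ is a welcome clarification of a step the paper passes over quickly, but it is not a different argument.
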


\begin{proof}
Our condition on $D$ implies that 
$D_{\sigma}=D\otimes_{E,\sigma}\C$ is isomorphic to the matrix algebra $\Mat_d(\C)$ of size $d$ over $\C$ for all $\sigma \in \Sigma_E$.

We may assume that $n_{\sigma}>0$. Then $\Omega^1(Z)_{\sigma}\ne \{0\}$ and
$j_{Z,\sigma}(D) \ne \{0\}$. Extending $j_{Z,\sigma}$ by $\C$-linearity, we get a $\C$-algebra homomorphism
$$D_{\sigma} \to \End_{\C}((\Omega^1(Z)_{\sigma}),$$
which provides  $\Omega^1(Z)_{\sigma}$  with the structure of a $D_{\sigma}=\Mat_d(\C)$-module. This implies that each
$n_{\sigma}$
is divisible by $d$. This proves (i). In order to prove  (ii), it suffices to notice that
$$\dim(Z)=\sum_{\sigma}\dim_{\C}(\Omega^1(Z)_{\sigma})=\sum_{\sigma} n_{\sigma} \ge dM.$$

\end{proof}
\begin{rem}
 \label{ssLambda}
 \begin{itemize}
 \item[(i)]
 Let $\Lambda$ be the centralizer of $\delta_p$ in  $\End(J(C_{f,p}))$, which coincides with the centralizer
 of $\Z[\delta_p]$ in  $\End(J(C_{f,p}))$.
 Let us consider the $\Q$-subalgebra
 $$\Lambda_{\Q}=\Lambda\otimes\Q\subset \End(J(C_{f,p}))\otimes\Q=\End^0(J(C_{f,p})).$$
  Clearly, $\Lambda_{\Q}$ coincides with the centralizer of
$\Q[\delta_p]$ in $\End^0(J(C_{f,p}))$. Since $\delta_p$ respects the {\sl theta
 divisor} on the Jacobian $J(C_{f,p})$, the algebra $\Lambda_{\Q}$ is stable
under the corresponding {\sl Rosati involution} and therefore is
semisimple as a $\Q$-algebra. It is also clear that  the number field $\Q[\delta_p]\cong \Q(\zeta_p)$ lies in the center of $\Lambda_{\Q}$.
Hence, $\Lambda_{\Q}$ becomes a semisimple $\Q[\delta_p]$-algebra.
\item[(ii)]
Let $i$ be an integer such that $1 \le i \le p-1$. We write $\sigma_i$ for the field embedding
$$\sigma_i:\Q[\delta_p]\hookrightarrow \C$$
that sends $\delta_p$ to $\zeta_p^{-i}$. Let us consider the corresponding subspace $\Omega^1(J(C_{f,p}))_{\sigma_i}$ of differentials of the first kind on $J(C_{f,p})$.
It is known \cite[Remark 3.7]{ZarhinCambridge} that if $p$ does {\sl not} divide $n$ then
\begin{equation}
\label{niSigma}
n_{\sigma_i}=\dim_{\C}(\Omega^1(J(C_{f,p}))_{\sigma_i})=\left[\frac{ni}{p}\right].
\end{equation}
\item[(iii)] It follows from Lemma \ref{divOmega1} applied to $Z=J(C_{f,p})$ and $E=\Q[\delta_p]$ that
if $p$ does {\sl not} divide $n$ and
$\Lambda_{\Q}$ is a central simple $\Q[\delta_p]$-algebra of dimension $d^2$ then $d$ divides all $[ni/p]$ for all
integers $i$ with $1\le i \le p-1$.
\item[(iv)]    Suppose that either $n=p+1$,  or $n-1$ is {\sl not} divisible by $p$. Then the greatest common divisor of all $n_{\sigma_i}$'s is $1$
\cite[Lemma 8.1(D) on p. 516--517]{ZarhinGanita}.
 It follows that if $\Lambda_{\Q}$ is a central simple $\Q[\delta_p]$-algebra
then $\Lambda_{\Q}=\Q[\delta_p]$.
\item[(v)]    Suppose that $p$ divides $n-1$, say, $n=kp+1$ where $k$ is an integer.
Then the greatest common divisor of all $n_{\sigma_i}$'s is $k$.
\cite[Lemma 8.1(D) on p. 516--517]{ZarhinGanita}
 It follows that if $\Lambda_{\Q}$ is a central simple $\Q[\delta_p]$-algebra of dimension $d^2$ then $d$ divides $k$.

\item[(vi)]  The number of $i$ with $n_{\sigma_i}>0$ is at least $(p+1)/2$ \cite[p. 101]{ZarhinCrelle}.  It follows that if $\Lambda_{\Q}$ is a central simple $\Q[\delta_p]$-algebra of dimension $d^2$
then, in light of Proposition \ref{divOmega1},
$$d\cdot \frac{p+1}{2}\le g$$
where $g=\dim(J(C_{f,p}))$ is the genus of $C_{f,p}$.  This implies that
$$d\le \frac{2g}{p+1}< \frac{2g}{p-1}.$$
\end{itemize}
 \end{rem}

\begin{lem}
\label{redMatrix}
Let $\mathcal{H}$ be a finite-dimensional 
$E$-algebra, and $\Lambda$  an order in  $\mathcal{H}$ that contains $O_E$.
(In particular,  $\Lambda$ is a finitely generated torsion-free $O_E$-module and 
 the natural map
$\Lambda\otimes  \Q\to \mathcal{H}$ is an isomorphism of finite-dimensional $\Q$-algebras.)

Suppose that there are a positive integer $d$ and   a maximal ideal $\mathfrak{m}$  of $O_E$ with residue field $k=O_E/\mathfrak{m}$ such that the $k$-algebra
$\Lambda/\mathfrak{m}\Lambda$ is isomorphic to the matrix algebra $\Mat_d(k)$ of size $d$ over $k$.

Then $\mathcal{H}$ is a central simple $E$-algebra of dimension $d^2$.
\end{lem}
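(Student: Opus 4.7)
My plan is to localize the order $\Lambda$ at the maximal ideal $\mathfrak{m}$ and then exploit the Azumaya algebra characterization over a local ring. Set $R=(O_E)_{\mathfrak{m}}$, a discrete valuation ring with residue field $k$ and field of fractions $E$, and put $\Lambda_R=\Lambda\otimes_{O_E}R$. Since $\Lambda$ is a finitely generated torsion-free $O_E$-module, $\Lambda_R$ is a finitely generated torsion-free, and therefore free, module over the DVR $R$. Because localization is exact, its reduction modulo $\mathfrak{m}R$ coincides with $\Lambda/\mathfrak{m}\Lambda\cong\Mat_d(k)$, which has $k$-dimension $d^2$. So $\Lambda_R$ is free of rank $d^2$ over $R$, and then the dimension count
$$\dim_E(\mathcal{H})=\mathrm{rank}_R(\Lambda_R)=d^2$$
is immediate.

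To prove central simplicity I would show that $\Lambda_R$ is an Azumaya $R$-algebra and then extend scalars to $E$. The cleanest way is to examine the enveloping map
$$\mu_R:\Lambda_R\otimes_R\Lambda_R^{op}\longrightarrow\End_R(\Lambda_R),$$
whose source and target are free $R$-modules of rank $d^4$. Reducing modulo $\mathfrak{m}R$, $\mu_R$ becomes the analogous map for $\Mat_d(k)$, which is a classical isomorphism because $\Mat_d(k)$ is central simple over $k$. Nakayama's lemma then forces $\mu_R$ to be surjective, and equality of free ranks promotes the surjection to an isomorphism. Hence $\Lambda_R$ is an Azumaya algebra over $R$, and tensoring with $E$ shows that $\mathcal{H}=\Lambda_R\otimes_R E$ is a central simple $E$-algebra of dimension $d^2$, as required.

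The only non-trivial point in this argument is the passage from the residue-field statement $\Lambda/\mathfrak{m}\Lambda\cong\Mat_d(k)$ to a global statement about $\mathcal{H}$; that is precisely what the local-ring Azumaya criterion (or, equivalently, the direct Nakayama argument sketched above) supplies. Everything else—Dedekind-to-DVR localization, freeness of finitely generated torsion-free modules over a DVR, exactness of localization, and rank bookkeeping—is routine. If one preferred to avoid the name ``Azumaya'' altogether, an equally valid variant is to show directly that $\mathcal{H}$ has no non-trivial two-sided ideal by intersecting any such ideal with $\Lambda_R$ and reducing modulo $\mathfrak{m}R$ to exploit the simplicity of $\Mat_d(k)$; this yields the same conclusion with essentially the same effort.
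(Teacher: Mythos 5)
Your proof is correct, and it takes a genuinely different route from the paper's. The paper argues directly over the Dedekind ring $O_E$: it uses the fact that a saturated submodule of a finitely generated $O_E$-module is a direct summand to split off the center $C$ (and any two-sided ideal $J \cap \Lambda$), reduces the splittings modulo $\mathfrak{m}$, and invokes the fact that $\Mat_d(k)$ has center $k$ and no nontrivial two-sided ideals to force the complementary summands to vanish. Your argument instead localizes at $\mathfrak{m}$ to pass to the DVR $R=(O_E)_{\mathfrak m}$, where $\Lambda_R$ is free, and then verifies the Azumaya condition by reducing the enveloping map modulo $\mathfrak{m}R$ and applying Nakayama plus a rank count; centrality and simplicity of $\mathcal{H}=\Lambda_R\otimes_R E$ then follow in one stroke from the base-change stability of Azumaya algebras. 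Your approach is more conceptual and packages center and simplicity together, at the cost of invoking the Azumaya formalism (or at least the enveloping-map characterization of central simple algebras); the paper's approach is more elementary and self-contained, but has to treat the center and the ideal-theoretic simplicity as two separate Nakayama-style arguments. One small remark: the alternative you sketch at the end (intersecting a two-sided ideal of $\mathcal{H}$ with $\Lambda_R$ and reducing mod $\mathfrak{m}R$) only yields simplicity, not centrality, so it would need to be supplemented by a separate argument that the center of $\mathcal{H}$ is $E$ — exactly as the paper does — whereas your main Azumaya argument already delivers both.
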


\begin{proof} Let $C_{\Q}$ the center of $\mathcal{H}$ that is a finite-dimensional commutative $E$-algebra.
Then $C:=C_{\Q}\cap \Lambda$ is the center of $\Lambda$.

Clearly, $C$  contains $O_E$ and is a saturated $O_E$-submodule of $\Lambda$. The latter means that
if $e u \in C$ for some $u\in \Lambda$ and   nonzero $e\in O_E$ then $u \in \Lambda$. This implies that the quotient $\Lambda/C$ is torsion-free (and finitely generated)
$O_E$-module and therefore is projective. 
It follows that $C$ is a direct summand of the $O_E$-module  $D$ and therefore there is an $O_E$-submodule $\mathfrak{P}$ of $\Lambda$ such that
$$\Lambda=C\oplus \mathfrak{P}.$$
Similarly, $O_E$ is a saturated $O_E$-submodule of $C$ and, by the same token, there is a locally free $O_E$-submodule $\mathfrak{Q}$ of $C$ such that
$$C=O_E\oplus \mathfrak{Q} \ \text{ and } \Lambda=C\oplus \mathfrak{P}=O_E\oplus \mathfrak{Q}\oplus  \mathfrak{P}.$$
Then the the natural map of $O_E/\mathfrak{m}=k$-modules,
$$O_E/ \mathfrak{m}\oplus  \mathfrak{Q}/ \mathfrak{m}   \mathfrak{Q} = C/\mathfrak{m}C   \to \Lambda/\mathfrak{m}\Lambda\cong \Mat_d(k)$$
 is {\sl injective}
and its image lies in the center $k$ of $\Mat_d(k)$.
The $k$-dimension arguments imply that  $\mathfrak{Q}/ \mathfrak{m}   \mathfrak{Q}=\{0\}$. Since $\mathfrak{Q}$ is finitely generated projective,
$\mathfrak{Q}=\{0\}$, i.e.,  
$C=O_E$ and the center of $\mathcal{H}$ is 
$$C_{\Q}=C\otimes \Q=O_E\otimes\Q=E.$$
Hence, $\mathcal{H}$ is a finite-dimensional
$E$-algebra with center $E$.

Let us check the simplicity of $\mathcal{H}$. Let $J_{\Q}$ be a proper two-sided ideal of $\mathcal{H}$. We need to check that $J_{\Q}=\{0\}$.
In order to do that, let us consider the intersection $J:=J_{\Q}\cap \Lambda$, which is obviously a two-sided ideal of $\Lambda$.
It is also clear that $J$ is a saturated $O_E$-submodule of $\Lambda$, i.e., the quotient $\Lambda/J$ is a torsion free (finitely generated)
$O_E$-module. Hence, $\Lambda/J$ is a projective $O_E$-module. It follows that $J$ is a direct summand of the $O_E$-module $\Lambda$,
i.e., there exists an $O_E$-submodule $\mathfrak{Q}$ of $\Lambda$ such that $\Lambda=J\oplus \mathfrak{Q}$. If $\mathfrak{Q}=\{0\}$
then $\Lambda=J$ and
$$\mathcal{H}=\Lambda\otimes \Q=J\otimes\Q=J_{\Q};$$
so $J_{\Q}=\mathcal{H}$, which is not true, because $J_{\Q}$ is a {\sl proper} ideal of $\mathcal{H}$.
This implies that
$\mathfrak{Q}\ne \{0\}$ and therefore $\mathfrak{Q}/\mathfrak{m}\mathfrak{Q}\ne \{0\}$. We have 
$$\Lambda/\mathfrak{m}\Lambda=J/\mathfrak{m}J \oplus \mathfrak{Q}/\mathfrak{m}\mathfrak{Q}.$$
Clearly, $J/\mathfrak{m}J$ is a proper two-sided ideal of the simple  algebra $\Lambda/\mathfrak{m}\Lambda\cong \Mat_d(k)$.
This implies that $J/\mathfrak{m}J=\{0\}$ , which implies that $J=\{0\}$ and therefore $J_{\Q}=\{0\}$.

To summarize: $\mathcal{H}$ is a simple finite-dimensional $E$-algebra with center $E$, i.e., a finite-dimensional central simple $E$-algebra.

On the other hand, the $E$-dimension of $\mathcal{H}$ equals the rank of the locally free $O_E$-module $\Lambda$, which, in turn,
equals the $k=O_E/\mathfrak{m}$-dimension of $\Lambda/\mathfrak{m}\Lambda$. Since
 $\Lambda/\mathfrak{m}\Lambda \cong \Mat_d(k)$ has $k$-dimension $d^2$, the $E$-dimension of $H$ is also $d^2$.
 It follows that $\mathcal{H}$ is a central simple $E$-algebra of dimension $d^2$.

\end{proof}

\section{Permutation groups and permutation modules}
\label{permute}
Our exposition in this section follows closely \cite[Sect. 2]{ZarhinCrelle}, see also \cite{Mortimer}.

Let $n \ge 5$ be an integer, $B$ a $n$-element set, and
 $\Perm(B)$  the group of permutations of $B$, which is isomorphic to the full symmetic group $\mathrm{S}_n$.
 The group  $\mathrm{S}_n$ has order $n!$ and contains precisely one (normal) subgroup of index $2$ that we denote by 
 $\Alt(B)$. Any isomorphism  between $\Perm(B)$ and $\mathrm{S}_n$ induces an isomorphism between $\Alt(B)$ and  the alternating group $\An$. 
 Since $n \ge 5$, the group  $\Alt(B)$ is simple non-abelian; its order is   $n!/2$.
Let $G$ be a  subgroup of $\Perm(B)$.

Let $F$ be a field. We write $F^B$
for the $n$-dimensional $F$-vector space of maps $h:B \to F$.
The space $F^B$ is  provided with a natural action of $\Perm(B)$ defined
as follows. Each $s \in \Perm(B)$ sends a map
 $h:B\to F$ into  $sh:b \mapsto h(s^{-1}(b))$. The permutation module $F^B$
contains the $\Perm(B)$-stable hyperplane
$$(F^B)^0=
\{h:B\to F\mid\sum_{b\in B}h(b)=0\}$$
and the $\Perm(B)$-invariant line $F \cdot 1_B$ where $1_B$ is the constant function $1$. The quotient $F^B/(F^B)^0$ is a trivial $1$-dimensional $\Perm(B)$-module.

Clearly, $(F^B)^0$ contains $F \cdot 1_B$ if and only if $\fchar(F)$ divides $n$.
 If this is {\sl not} the case then there is a $\Perm(B)$-invariant splitting
$$F^B=(F^B)^0 \oplus F \cdot 1_B.$$

Let $G$ be a subgroup of  $\Perm(B)$.
Clearly, $F^B$ and $(F^B)^0$  carry natural structures of  $G$-modules or (which is the same) of $F[G]$-modules. (Hereafter $F[G]$ stands for the {\sl group algebra} of $G$.)

If $F=\Q$ then
 the character of $\Q^B$ sends each $g \in G$ to the number of fixed points of $g$ in $B$ (\cite{SerreRep}, ex. 2.2, p.12); it takes on values in $\Z$ and called the
 {\sl permutation character} of $B$. Let us denote by
$\bphi=\bphi_B:G \to \Q$
 the character of $(\Q^B)^0$.

If $\fchar(F)=0$ then the  $F[G]$-module $(F^B)^0$ is absolutely simple
\footnote{Recall that a simple $F[G]$-module $V$ is called {\sl absolutely simple} if the centralizer of $G$ in $\End_F(V)$ coincides with $F$ or equivalently
the natural homomorphism $F[G] \to \End_F(V)$ of $F$-algebras is surjective.}
 if and only if the action of  $G$ on $B$  is doubly transitive (\cite[ex. 2.6, p. 17]{SerreRep}, \cite{Mortimer}).
(Notice that $1+\bphi$ is the permutation character. This implies that the character $\bphi$ also takes on values in $\Z$.) In particular,   $\Q_p[G]$-module $(\Q_p^B)^0$ is absolutely simple if and only if the action of  $G$ on $B$  is doubly transitive.

In what follows we concentrate on the case of $F=\F_p$.

\begin{rem}
\label{DoubleprimeToP}
\begin{itemize}
\item
Let $p$ be a prime that does {\sl not} divide the order of $G$. This condition is automatically fulfilled if $p>n$, because $G$, being isomorphic to a subgroup of $\mathbf{S}_n$, has order that divides $n!$.  
\item
Suppose that the action of  $G$ on $B$  is {\sl doubly transitive}. 
Taking into account that the representation theory  of $G$ over $\Q_p$ is ``the same over $\F_p$ as over $\Q_p$'' (\cite[Sect. 15.5, Prop.43]{SerreRep}, \cite{Mortimer}), we conclude that the
$\F_p[G]$-module $(\F_p^B)^0$ is {\sl absolutely simple} (see also \cite[Cor. 7.5 on p. 513]{ZarhinGanita}).
\end{itemize}
\end{rem}

\begin{defn}
\label{heartDef}
Let $G$ be a subgroup of $\Perm(B)$.

If  $p\mid n$  then let us define the $G$-module
$$\left(\F_p^B\right)^{00}:=(\F_p^B)^0/(\F_p \cdot 1_B).$$
If $p$ does not divide $n$ then let us put
$$\left(\F_p^B\right)^{00} := (\F_p^B)^0.$$
The $G$-module $(\F^B)^0$
is called the {\sl heart} of the permutation  representation of $G$ on $B$ \cite{Mortimer}.
It follows from the definition that
$\dim_{\F_p}(\left(\F_p^B)^{00}\right))=n-1$ if $n$ is not divisible by $p$
 and $\dim_{\F_p}(\left(\F_p^B)^{00}\right)=n-2$ if $p\mid n$.
\end{defn}

\begin{lem}
\label{Anp}
Assume that $G=\Perm(B)$ or $\Alt(B)$.
Then the $G$-module $\left(\F_p^{B}\right)^{00}$ is absolutely simple.
\end{lem}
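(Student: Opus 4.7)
The plan is to prove absolute simplicity by running the argument over the algebraic closure $\overline{\F}_p$, which reduces the statement to simplicity of the analogous heart $(\overline{\F}_p^B)^{00}$ as an $\overline{\F}_p[G]$-module; I would handle the two cases $G=\Perm(B)$ and $G=\Alt(B)$ separately.

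For $G=\Perm(B)$ a direct argument works. Let $V$ be a nonzero $G$-submodule of $(\overline{\F}_p^B)^{00}$ and let $\tilde V\subseteq(\overline{\F}_p^B)^0$ be its preimage. Since $V\ne 0$, there is $v=\sum_{b\in B}a_b e_b\in\tilde V$ whose coefficients are not all equal. Pick $i\ne j$ in $B$ with $a_i\ne a_j$; applying the transposition $(i\,j)\in\Perm(B)$ to $v$ and subtracting gives $(i\,j)\cdot v-v=(a_j-a_i)(e_i-e_j)\in\tilde V$, so $e_i-e_j\in\tilde V$. The $2$-transitivity of $\Perm(B)$ on $B$ sends $e_i-e_j$ to every $e_k-e_l$ with $k\ne l$, and these span $(\overline{\F}_p^B)^0$. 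Hence $\tilde V=(\overline{\F}_p^B)^0$ and $V=(\overline{\F}_p^B)^{00}$.

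The case $G=\Alt(B)$ is the main obstacle, because a single transposition is no longer available inside $G$. Here I would invoke Clifford theory at the index-$2$ inclusion $\Alt(B)\triangleleft\Perm(B)$, which applies because $p$ is odd (so $|\Perm(B)/\Alt(B)|=2$ is invertible). Set $W=(\overline{\F}_p^B)^{00}$, which is simple as a $\Perm(B)$-module by the previous step; the Clifford criterion says that the restriction of $W$ to $\Alt(B)$ is simple if and only if $W\not\cong W\otimes \mathrm{sgn}$ as $\Perm(B)$-modules, where $\mathrm{sgn}$ is the sign character. To verify the non-isomorphism I would compare Brauer characters on odd conjugacy classes, using that the character of $W$ at $g$ equals $\mathrm{fix}_B(g)-1$ when $p\nmid n$ and $\mathrm{fix}_B(g)-2$ when $p\mid n$. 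Evaluated on a transposition the value is $n-3$ (resp.\ $n-4$), and on a $4$-cycle it is $n-5$ (resp.\ $n-6$); both conjugacy classes exist since $n\ge 5$. If $W\cong W\otimes\mathrm{sgn}$ then the character of $W$ would equal its own negative on both classes, which would force $p$ to divide both $2(n-3)$ and $2(n-5)$ (respectively $2(n-4)$ and $2(n-6)$), and hence to divide their difference $4$; this contradicts $p$ being odd.

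Once simplicity is established over $\overline{\F}_p$, the absolute simplicity of $(\F_p^B)^{00}$ is immediate from $(\F_p^B)^{00}\otimes_{\F_p}\overline{\F}_p=(\overline{\F}_p^B)^{00}$.
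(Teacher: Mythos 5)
Your argument is correct and, unlike the paper, which simply cites the literature (Farahat, Wagner, Klemm, Mortimer, and the author's own earlier paper), it gives a genuine self-contained proof; this is a useful reconstruction. The $\Perm(B)$ step (transposition trick plus $2$-transitivity) is the standard Dickson-style argument, and the reduction of the $\Alt(B)$ case to the non-isomorphism $W\not\cong W\otimes\mathrm{sgn}$ via Clifford theory at an index-$2$ normal subgroup, valid because $p$ is odd, is exactly the right idea.

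One remark on the last paragraph of the $\Alt(B)$ step: you announce that you will compare \emph{Brauer} characters, but then reason via divisibility by $p$, which is really an argument with the $\overline{\F}_p$-valued trace functions. Both routes work, but they should not be conflated. If you use Brauer characters, their values lie in $\Z$ and the identity $\beta_W=\beta_{W\otimes\mathrm{sgn}}$ on a transposition forces $n-3=0$ (resp.\ $n-4=0$), contradicting $n\ge 5$ outright, with no need for the $4$-cycle or for $p$ at all. If you instead compare $\overline{\F}_p$-valued traces, you need the fact that non-isomorphic simple $\overline{\F}_p[G]$-modules have distinct trace functions (a consequence of Artin--Wedderburn/Jacobson density for $\overline{\F}_p[G]/\mathrm{rad}$, worth a sentence); then the two congruences $p\mid 2(n-3)$ and $p\mid 2(n-5)$ (resp.\ $p\mid 2(n-4)$, $p\mid 2(n-6)$) do indeed give $p\mid 4$, contradicting $p$ odd, as you say. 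Either version closes the argument; just pick one and state it cleanly.
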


\begin{proof}
 This  result is well known (and goes back to Dickson).  See   \cite[Th. 5.2 on p. 133]{Glasgow}, \cite{Wagner}, \cite{Mortimer}, \cite[Lemma 2.6]{ZarhinCrelle}.
\end{proof}

\begin{rem}
\label{A5notVery}
It turns out that the case of $n=5$ and 
$$G=\Alt(B)\cong \mathbf{A}_5\cong \mathrm{PSL}(2,\F_5)=\mathrm{SL}(2,\F_5)/\{\pm 1\}$$ 
is rather special when
\begin{equation}
\label{pMod5} 
p \equiv \pm 1 \bmod 5.
\end{equation}
 Namely,  in this case $p>5$ and 
the $G=\mathrm{PSL}(2,\F_5)$-module $(\F_p^{B})^{00}=(\F_p^{B})^0$  viewed as the $\SL(2, \F_5)$-module splits into a nontrivial tensor 
product. In order to see this, recall \cite[Sect. 38]{Dornhoff} that $\mathrm{SL}(2,\F_5)$ has the ordinary character $\theta_2$ of degree $4$ (which, is the lift
of $\phi_5$  from $\mathbf{A}_5$)
and
 two {\sl ordinary irreducible} characters $\eta_1$ and $\eta_2$ of degree $2$ with
$$\Q(\eta_1)=\Q(\eta_2)=\Q(\sqrt{5}),$$
whose product $\eta_1 \eta_2$ coincides with $\theta_2$.
By the quadratic reciprocity law,
the congruence \eqref{pMod5}  implies that $\sqrt{5} \in \F_p$ and therefore $\sqrt{5}$ lies in the field $\Q_p$ of $p$-adic numbers,
because $p \ne 2,5$ is {\sl odd}. This means that
$$\Q_p(\eta_1)=\Q_p(\eta_2)=\Q_p.$$
By a theorem of Janusz \cite[Theorem (d) on p. 3-4]{Janusz}, characters of both $\eta_1$  and $\eta_2$ can be realized over $\Q_p$,
i.e., there are two-dimensional $\Q_p$-vector spaces $V_1$ and $V_2$ and
linear representations
\begin{equation}
\label{rho1rho2}
\rho_1: \mathrm{SL}(2,\F_5) \to \Aut_{\Q_p}(V_1) \cong \GL(2,\Q_p), 
\end{equation}
$$ \rho_2: \mathrm{SL}(2,\F_5) \to \Aut_{\Q_p}(V_2)\cong \GL(2,\Q_p),$$
whose {\sl characters} are $\eta_1$ and $\eta_2$ respectively.  Let $T_1$ and $T_2$ be any $\mathrm{SL}(2,\F_5)$-invariant $\Z_p$-lattices of rank $2$ in $V_1$ and $V_2$ respectively.
Since the order $120$ of the group $\mathrm{SL}(2,\F_5)$ is prime to $p$ and  the $\Q_p[\mathrm{SL}(2,\F_5)]$-modules $V_1$ and $V_2$ are simple, 
it follows from  \cite[Sect. 15.5, Prop. 43]{SerreRep}) that
their {\sl reductions} modulo $p$
$$\bar{V}_1=T_1/pT_1, \ \bar{V}_2=T_2/pT_2$$
are simple $\F_p[\mathrm{SL}(2,\F_5)]$-modules.
On the other hand, the tensor product
$$T:=T_1 \otimes_{\Z_p} T_2\subset V_1 \otimes_{\Q_p} V_2$$
 is a $\mathrm{SL}(2,\F_5)$ -invariant $\Z_p$-lattice of rank $4$ in $V_1\otimes_{\Q_p}V_2=:V$.
 The equality 
 \begin{equation}
 \label{eta1eta2phi}
 \eta_1 \eta_2=\theta_2
 \end{equation}
   of the corresponding class functions on  $\mathrm{SL}(2,\F_5)$ implies (if we take into account that $\phi_B$ is irreducible)
  that the $\Q_p[\mathrm{SL}(2,\F_5)]$-module
 $V$ is simple and the $\F_p[\mathrm{SL}(2,\F_5)]$-module
 \begin{equation}
 \label{V1tensorV2}
 T/pT=\big(T_1 \otimes_{\Z_p} T_2\big)/p= (T_1/pT_1)\otimes_{\F_p}(T_1/pT_1)=\bar{V}_1\otimes_{\F_p}\bar{V}_2
 \end{equation}
 is simple.   On the other hand, the equality \eqref{eta1eta2phi} implies the existence of an isomorphism
 $$u: V=V_1 \otimes_{\Q_p} V_2 \cong (\Q_p^{B})^0$$
 of the $\Q_p[\mathrm{SL}(2,\F_5)]$-modules. 
 
 Obviously, 
 $$(\Z_p^{B})^0:=\{h: B \to \Z_p\mid \sum_{b\in B}h(b)=0\}$$
 is  a $\mathrm{SL}(2,\F_5)$-invariant $\Z_p$-lattice of rank $4$ in $(\Q_p^{B})^0$.
 (Here $\mathrm{SL}(2,\F_5)$ acts on $(\Q_p^{B})^0$ through the quotient $\mathrm{SL}(2,\F_5)/\{\pm 1\}=\mathbf{A}_5$.)
 Notice that $u(T)$ is a (may be, another)   $\mathrm{SL}(2,\F_5)$-invariant $\Z_p$-lattice of rank $4$ in $(\Q_p^{B})^0$ and
 the $\F_p[\mathrm{SL}(2,\F_5)]$-module $u(T)/p \ u(T)$ is obviously isomorphic to $T/pT$.  In light of \cite[Sect. 15.1,  Th. 32]{SerreRep}, the {\sl simplicity} of  the $\F_p[\mathrm{SL}(2,\F_5)]$-modules
 $T/pT$ (and, hence, of $u(T)/p \ u(T)$) implies that the  $\F_p[\mathrm{SL}(2,\F_5)]$-modules $T/pT$ and $(\Z_p^{B})^0/p (\Z_p^{B})^0$ are isomorphic.
 Taking into account  \eqref{V1tensorV2} and  that $(\Z_p^{B})^0/p \cdot (\Z_p^{B})^0=(\F_p^{B})^0$, we conclude that
  that the $\mathrm{SL}(2,\F_5)$-modules
 $\bar{V}_1\otimes_{\F_p}\bar{V}_2$ and $(\F_p^{B})^0$ are isomorphic.
\end{rem}

\begin{rem}
One may find an explicit construction of the group embeddings $\SL(2,\F_5) \to \GL(2,\F_p)$ (when $p$ satisfies \eqref{pMod5})
in the book of M. Suzuki \cite[Ch.  3, Sect. 6]{Suzuki}.

\end{rem}

\section{Very simple and central simple representations}
\label{verySR}
\begin{defn}
Let $V$ be a vector space of over a field $F$, let $G$ be a group and
$\rho: G \to \Aut_F(V)$ a linear representation of $G$ in $V$.
Let $R \subset \End_F(V)$ be a $F$-subalgebra containing the
identity map $$\Id: V \to V.$$
\begin{itemize}
\item[(i)]
We say that $R$ is $G$-normal if
 $$\rho(\sigma) R \rho(\sigma)^{-1} \subset R \quad \forall
 \sigma \in G.$$
 \item[(ii)]
 We say that a normal $G$-subalgebra is {\sl obvious} if it coincides either
 with $F \cdot  \Id$ or with $\End_F(V)$.
  \item[(iii)]
 We say that  the $G$-module $V$ is {\sl very simple} if 
 every  $G$-normal subalgebra of $\End_F(V)$ is obvious.
  \item[(iii)]
 We say that  the $G$-module $V$ is {\sl cental simple} if 
 every  $G$-normal subalgebra of $\End_F(V)$  is a central simple $F$-algebra.
   \item[(iv)]
 We say that  the $G$-module $V$ is {\sl strongly simple} if 
 every  $G$-normal subalgebra of $\End_F(V)$  is a  simple $F$-algebra.
 \end{itemize}
\end{defn}

\begin{rem}
\label{image}
\begin{enumerate}
\item[(i)]
Clearly, a very simple $G$-module is central simple and strongly simple. It is also clear that
a central simple $G$-module is strongly simple.

\item[(ii)]
 Clearly,  a subalgebra $R \subset \End_F(V)$ is $G$-normal  if and only if it is $\rho(G)$-normal. It follows readily that
 the $G$-module $V$ is very simple (resp. central simple) (resp. strongly simple) if and only if the corresponding $\rho(G)$-module $V$ is very simple (resp. central simple)  (resp. strongly simple). 
 It is known \cite[ Rem. 2.2(ii)]{ZarhinM} that a very simple module is absolutely simple.
 \item[(iii)] If $R$ is a $G$-normal subalgebra of $\End_F(V)$ then
$$\rho(\sigma) R \rho(\sigma)^{-1} = R \quad \forall
 \sigma \in G.$$
 Indeed, suppose that there is $u \in R$ such that for some $\sigma \in G$
 $$u \not\in \rho(\sigma) R \rho(\sigma)^{-1}.$$
 This implies that
 $$ \rho(\sigma^{-1}) u  \rho(\sigma^{-1})^{-1}=
 \rho(\sigma)^{-1}u \rho(\sigma)
  \not\in  \rho(\sigma)^{-1}\big(\rho(\sigma) R \rho(\sigma)^{-1}\big)\rho(\sigma)=R.$$
  It follows that
  $$ \rho(\sigma^{-1}) R  \rho(\sigma^{-1})^{-1}\not\subset R,$$
  which contradicts the normality of $R$, because $\sigma^{-1}\in G$. (Of course, if $\dim_F(V)$ is finite, the desired equality follows readily from the coincidence of $F$-dimensions of $R$ and 
 $\rho(\sigma) R \rho(\sigma)^{-1}$.)
 \item[(iv)]
If $G^{\prime}$ is a subgroup of $G$ then every $G$-normal subalgebra is also a normal $G^{\prime}$-subalgebra. It follows that
 if the $G^{\prime}$-module $V$ is very simple then the $G$-module $V$ is also very simple.
 \item[(v)]
Let us check that a strongly simple $G$-module $V$ is simple. Indeed, if it is not then there is a {\sl proper} $G$-invariant $F$-vector subspace $W$ of $V$. Then
 the $F$-subalgebra
 $$R:=\{u \in \End_F(V)\mid u(W) \subset W\}$$
 is $G$-normal but  even {\sl not semisimple}, because it contains a proper two-sided ideal
 $$I(W,V):= \{u \in \End_F(V)\mid u(V) \subset W\}.$$
 This proves the simplicity of $V$.  
 
The centralizer $\End_G(V)$ is obviously $G$-normal. This implies that it is a division algebra over $F$.
(Actually, it follows from the simplicity of the $G$-module $V$.
 
 If the $G$-module $V$ is central simple (resp.  very simple) then  normal $\End_G(V)$ is  a central division $F$-algebra
 (resp. coincides with $F \cdot \Id$).
 
  \item[(vi]) If $R$ is a $G$-normal subalgebra of $\End_F(V)$ then for each $\sigma\in G$ the map
  $$R \to R, \  u \mapsto \rho(\sigma)u \rho(\sigma)^{-1}$$
  is an {\sl automorphism} of the $F$-algebra $R$ (in light of (iii)).  This implies that if $C$ is the center of $R$ then
  $$\rho(\sigma)C \rho(\sigma)^{-1}=C$$
  for all $\sigma \in G$. This means that $C$ is a $G$-normal subalgebra of $\End_F(V)$.
 
 \end{enumerate}
 
\end{rem}

Recall that a module $V$ over a ring $R$ is called {\sl isotypic} if either $V$ is simple or is isomorphic to direct sum of finitely many copies
of a simple $R$-module $W$.
The following assertion is contained in \cite[Lemma 7.4]{ZarhinM}

\begin{lem}
\label{74}
Let $H$ be a group, $F$ a field, $V$ a vector space  of finite positive dimension $N$ over $F$.
Let $\rho: H \to \Aut_F(V)$ be an irreducible linear representation of $H$.  Let $R$ be a $H$-normal subalgebra of $\End_F(V)$. Then:
\begin{itemize}
\item[(i)]
The faithful $R$-module $V$ is semisimple.
\item[(ii)]
Either the $R$-module $V$ is isotypic or there is a subgroup $H^{\prime}$ of finite index $r$ in $H$ such that $r>1$ and $r$ divides $N$.
\end{itemize}
\end{lem}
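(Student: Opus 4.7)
The plan is to exploit the fact that $R$ being $H$-normal means $H$ acts on $R$ by the automorphisms $r\mapsto \rho(\sigma)r\rho(\sigma)^{-1}$, so the lattice of $R$-submodules of $V$ is permuted by $\rho(H)$. More precisely, for any $R$-submodule $U\subset V$ and any $\sigma\in H$, the subspace $\rho(\sigma)U$ is an $R$-submodule: if $r\in R$ and $u\in U$, then $r\,\rho(\sigma)u=\rho(\sigma)\bigl(\rho(\sigma)^{-1}r\rho(\sigma)\bigr)u\in\rho(\sigma)U$, using $R=\rho(\sigma)^{-1}R\rho(\sigma)$ by Remark \ref{image}(ii). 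Moreover, $U\mapsto \rho(\sigma)U$ preserves simplicity and isomorphism type of simple $R$-modules.

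For part (i), consider the socle $W\subset V$, i.e.\ the sum of all simple $R$-submodules of $V$. By the observation above, $W$ is stable under $\rho(H)$. Since $\dim_F V$ is finite and positive, $V$ contains at least one simple $R$-submodule, so $W\ne\{0\}$. Because $\rho$ is an irreducible representation of $H$, we must have $W=V$, and a module equal to its socle is semisimple. The faithfulness is automatic, since $R\subset\End_F(V)$ acts in the tautological way.

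For part (ii), decompose the semisimple $R$-module $V$ into its isotypic components
\[
V=V_1\oplus V_2\oplus\cdots\oplus V_r,
\]
where $V_i$ is the sum of all simple $R$-submodules belonging to a fixed isomorphism class $[S_i]$, and the $[S_i]$ are pairwise distinct. Each $\rho(\sigma)$ permutes the set $\{V_1,\dots,V_r\}$: indeed, $\rho(\sigma)$ sends simple $R$-submodules to simple $R$-submodules and respects isomorphism (twisted by the automorphism $r\mapsto\rho(\sigma)^{-1}r\rho(\sigma)$ of $R$), so it takes each $V_i$ onto some $V_{\pi(i)}$. This gives a homomorphism $H\to\Sigma_r$. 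If $r=1$, the $R$-module $V$ is isotypic and we are done. If $r>1$, then the $H$-orbit of $V_1$ is some subset of $\{V_1,\dots,V_r\}$, and its $H$-span is a nonzero $H$-invariant subspace of $V$; by irreducibility of $\rho$, this orbit must be all of $\{V_1,\dots,V_r\}$, so $H$ acts transitively on the $r$ isotypic components. The stabilizer $H'$ of $V_1$ in $H$ then has index exactly $r$ in $H$, and since the components are transitively permuted they all have the same $F$-dimension, whence $r\mid\dim_F V=N$.

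The only point that needs a small amount of care is checking that $\rho(\sigma)$ really does carry isotypic components to isotypic components (and not just simple submodules to simple submodules); this uses that $\sigma$ acts on $R$ by an $F$-algebra automorphism, so it induces a bijection on isomorphism classes of simple $R$-modules, compatibly with the pushforward $U\mapsto\rho(\sigma)U$. No further obstacle is anticipated, as everything reduces to standard facts about isotypic decompositions of semisimple modules, combined with the irreducibility of $\rho$.
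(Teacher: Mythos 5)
Your proof is correct, and it is precisely the classical Clifford-theoretic argument that this lemma encodes; the paper itself gives no proof but cites Lemma 7.4 of \cite{ZarhinM}, and your argument — socle is $\rho(H)$-stable, hence all of $V$, then $\rho(H)$ permutes the isotypic components, irreducibility forces transitivity, and orbit-stabilizer together with equality of dimensions yields the index divisibility — is the standard route such a proof takes. The one subtlety, that $\rho(\sigma)$ carries an isotypic component onto an isotypic component because conjugation by $\rho(\sigma)$ is an algebra automorphism inducing a bijection on isomorphism classes of simple $R$-modules, is handled explicitly, so there is no gap.
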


\begin{prop}
\label{centralsimpleIndex}
Let $F$ be  a field, whose Brauer group $\mathrm{Br}(F)=\{0\}$. (E.g., $F$ is either finite or an algebraically closed field.)
Let $V$ be a vector space of finite positive dimension $N$ over $F$. Let $H$ be a group and
$\rho: H \to \Aut_F(V)$ a linear absolutely irreducible representation of $H$ in $V$. Suppose that 
every maximal subgroup of $H$ has index that does not divide $N$.


Then the $H$-module $V$ is central simple.  
\end{prop}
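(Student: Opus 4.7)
The plan is to take an arbitrary $H$-normal subalgebra $R\subset \End_F(V)$ and establish, in two stages, that $R$ is simple as an $F$-algebra and that its center equals $F$.

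First I would apply Lemma \ref{74} to $R$. Part (i) gives that $V$ is semisimple as an $R$-module. Part (ii) presents two alternatives: either $V$ is $R$-isotypic, or there is a proper subgroup $H'\subset H$ of finite index $r>1$ with $r\mid N$. In the second case, any maximal subgroup $M$ of $H$ containing $H'$ would satisfy $[H:M]\mid [H:H']=r$, so $[H:M]\mid N$, violating the hypothesis; hence $V$ must be $R$-isotypic. Combined with $V$ being a faithful, finite-dimensional, semisimple $R$-module, the Jacobson density theorem then identifies $R$ with $\End_\Delta(W)$, where $W$ is the (unique up to isomorphism) simple $R$-module and $\Delta=\End_R(W)$ is a division $F$-algebra. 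In particular $R$ is a simple Artinian $F$-algebra, with center $E:=Z(R)$ a finite field extension of $F$.

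Next I would show $E=F$. By Remark \ref{image}, $E$ is itself an $H$-normal subalgebra of $\End_F(V)$. Let $H_0$ be the kernel of the conjugation action $H\to\Aut_F(E)$; this is a normal subgroup of $H$ with $H/H_0$ embedded into $\Aut_F(E)$. Suppose $E\ne F$. If $H_0=H$, then every element of $E$ commutes with $\rho(H)$, so absolute irreducibility yields $E\subset \End_{F[H]}(V)=F\cdot\Id$, a contradiction. So $H_0$ is a proper subgroup of $H$. The goal now is to show $[H:H_0]\mid N$: then any maximal subgroup of $H$ containing $H_0$ has index dividing $[H:H_0]$, hence dividing $N$, contradicting the hypothesis. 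To that end I would combine absolute irreducibility with the decomposition of $V\otimes_F\bar F$ under $E\otimes_F\bar F\cong \bar F^{[E:F]}$ (separability of $E/F$ is automatic when $F$ is finite or algebraically closed). The resulting eigenspace decomposition $V\otimes\bar F=\bigoplus_{\sigma}V_\sigma$, indexed by the $F$-embeddings $\sigma:E\hookrightarrow \bar F$, has every $V_\sigma$ nonzero because $V$ is free over the field $E$; absolute simplicity of the $F[H]$-module $V$ then forces $H$ to permute the $V_\sigma$ transitively. Since $\Aut_F(E)$ acts freely on the set of $F$-embeddings of $E$ into $\bar F$, transitivity of $H/H_0\hookrightarrow \Aut_F(E)$ on a set of size $[E:F]$ forces $|H/H_0|=[E:F]$. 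Finally, $V$ is an $E$-vector space of $F$-dimension $N$, so $[E:F]\mid N$, whence $[H:H_0]\mid N$, the desired contradiction.

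The main obstacle is the centrality step. Simplicity of $R$ follows rather mechanically from Lemma \ref{74} and the density theorem; the delicate part is translating absolute irreducibility into the numerical identity $|H/H_0|=[E:F]$ and then invoking both $[E:F]\mid N$ and the maximal-subgroup hypothesis. The Galois-theoretic bookkeeping---transitivity of $H$ on embeddings of $E$, free action of $\Aut_F(E)$ on those embeddings, and freeness of $V$ over $E$---is where all the hypotheses interact simultaneously, and where the restriction on $F$ (through separability of $E/F$) enters. A minor technical point is that a maximal subgroup of $H$ containing $H_0$ must exist; this holds because the subgroups of $H$ above $H_0$ correspond bijectively to the subgroups of the finite group $H/H_0$, among which a maximal proper subgroup always exists.
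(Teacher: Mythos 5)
Your proposal is correct and its skeleton coincides with the paper's: invoke Lemma \ref{74} together with the maximal-subgroup hypothesis to conclude that $V$ is $R$-isotypic, deduce that $R$ is simple with center a finite field extension $E/F$ of degree dividing $N$, form the conjugation action $H\to\Aut(E/F)$ with kernel $H_0$, prove $[H:H_0]\mid N$, conclude $H_0=H$ from the hypothesis, and use absolute irreducibility to force $E=F$. The one step you handle differently is the key divisibility $[H:H_0]\mid N$. The paper gets it in one line: $[H:H_0]$ equals the order of the image of $H$ in $\Aut(E/F)$, and by Artin's theorem $|\Aut(E/F)|$ divides $[E:F]$, which divides $N$; this requires no separability hypothesis. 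You instead re-derive it through the eigenspace decomposition of $V\otimes_F\bar F$ under $E\otimes_F\bar F$, transitivity of the induced $H$-action on the (nonzero) eigenspaces $V_\sigma$, and freeness of the $\Aut_F(E)$-action on embeddings, arriving at the stronger equality $|H/H_0|=[E:F]$. This works but costs extra machinery, and it tacitly assumes $E/F$ separable in order for $E\otimes_F\bar F$ to split into $[E:F]$ copies of $\bar F$; you note that separability is automatic for $F$ finite or algebraically closed, which covers the paper's examples, but the proposition as stated only assumes $\mathrm{Br}(F)=\{0\}$, and there are imperfect fields with trivial Brauer group (e.g.\ $\bar{\F}_p(t)$, by Tsen). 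The paper's direct Galois-theoretic appeal avoids this issue entirely, since $|\Aut(E/F)|$ divides $[E:F]$ whether or not $E/F$ is separable. Otherwise your proof tracks the paper's exactly, and you make explicit (which the paper leaves implicit) why the second alternative in Lemma \ref{74}(ii) is excluded.
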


\begin{proof}
Slightly abusing the notation, we write $F$ instead of $F\cdot \Id$.

Let $R$ be a $H$-normal subalgebra of $\End_F(V)$. It follows from Lemma \ref{74} that the faithful $R$-module $V$ is {\sl isotypic}, i.e., there is a simple faithful  $R$-module $W$  and a positive integer $a$ such that the $R$-modules $V$ and $W^a$ are isomorphic. The existence of a faithful simple $R$-module implies that $R$ is a simple $F$-algebra. In particular, the center $k$ of $R$ is a field. We have
$$F=F\cdot \Id\subset k\subset R\subset \End_F(V).$$
Then $V$ carries the natural structure of a $F$-vector space. This implies that the degree
$[k:F]$ divides $\dim_F(V)=N$.

The center $k$ of $H$-normal $R$ is also $H$-normal (see Remark \ref{image}(v)). This gives rise to the group homomorphism
\begin{equation}
\label{centerConj}
H \to \Aut(k/F),  \ \sigma \mapsto \{c \mapsto \rho(\sigma)u \rho(\sigma)^{-1}\}.
\end{equation}
Here $\Aut(k/F)$ is the automorphism group of the field extension $k/F$. By Galois theory,
the order of $ \Aut(k/F)$ divides $[k: F]$, which in turn, divides $N$.  This implies
that the kernel of the homomorphism \eqref{centerConj} is a subgroup of $H$, whose index divides $N$.
Our condition on indices of subgroups of $H$ implies that the kernel coincides with the whole $H$,
i.e., the homomorphism \eqref{centerConj} is trivial. This means that all elements of $k$ commute with  $\rho(\sigma)$
for all $\sigma\in H$.  The absolute irreducibility of $\rho$ implies that $k\subset F$ and therefore
$$k= F=F\cdot \Id.$$
So, $R$ is a simple $F$-algebra with center $ F\cdot \Id$, i.e., is a central simple $F$-algebra. This ends the proof.
\end{proof}

\begin{thm}
\label{AdGH}
Let $F$ be a field, whose Brauer group $\mathrm{Br}(F)=\{0\}$. (E.g., $F$ is either finite or an algebraically closed field.)
Let
$V$ be an $F$-vector space of finite  dimension $N>1$.
Let $G$ be a group and
$$\rho: G \to \Aut_F(V)$$
be a group homomorphism.
Let $H$ be a  normal subgroup of  $G$ that enjoys the following properties.
\begin{itemize}
\item[(i)]
If $H^{\prime}$ is a subgroup of $H$ of finite index $N^{\prime}$ and $N^{\prime}$ divides $N$
then $H^{\prime}=H$.
\item[(ii)]
$H$ is a simple non-abelian group. Assume additionally that either $H=G$,  or $H$  is the only proper normal subgroup of $G$.
\item[(iii)]
The $H$-module $V$ is absolutely simple, i.e., the representation of $H$ in $V$ is irreducible and the centralizer
$\End_H(V)=F \cdot \Id$.
\end{itemize}
Let $R \subset \End_F(V)$ be a $G$-normal subalgebra.   Then there are positive integers $a$ and $b$ that enjoy the following properties.
\begin{enumerate}
\item[(a)]
$N=ab$;
\item[(b)]
The $F$-algebra $R$ is isomorphic to the matrix algebra $\Mat_a(F)$ of size $a$ over $F$.
In particular, the $G$-module $V$ is central simple.
\item[(c)]
The $R$-module  $V$ is semisimple, isotypic and isomorphic to $R^b$.
In addition, 
the centralizer $\tilde{R}=\End_R(V)$ is a normal $G$-subalgebra that is isomorphic to  the matrix algebra $\Mat_b(F)$ of size $b$ over $F$.
\item[(d)]
Suppose that $a \ne 1, b \ne 1$ (i.e., $R$ is not obvious).  Then both homomorphisms
$$ \mathrm{Ad}_R: G \to \Aut(R)=R^{*}/F^{*}\Id\cong \mathrm{GL}(a,F)/F^{*}=\mathrm{PGL}(a,F),$$
$$\mathrm{Ad}_R(\sigma) (u)=\rho(\sigma)u \rho(\sigma)^{-1} \ \forall u \in R$$
and
$$ \mathrm{Ad}_{\tilde{R}}: G \to \Aut(\tilde{R})=\tilde{R}^{*}/F^{*}\Id\cong \mathrm{GL}(b,F)/F^{*}=\mathrm{PGL}(b,F),$$
$$\mathrm{Ad}_{\tilde{R}}(\sigma) (u)=\rho(\sigma)u \rho(\sigma)^{-1} \ \forall u \in \tilde{R}$$
(with $\sigma \in G$) are injective. In addition,
$$ \mathrm{Ad}_R(H)\subset \mathrm{PSL}(a,F), \ \mathrm{Ad}_{\tilde{R}}(H)\subset \mathrm{PSL}(b,F).$$
\item[(e)]
The $H$-module $V$ is central simple.
\end{enumerate}
\end{thm}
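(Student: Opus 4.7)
The plan is to apply the already-established Proposition~\ref{centralsimpleIndex} to the normal subgroup $H$ and then exploit the structure theory of central simple algebras over a field of trivial Brauer group. First I would observe that conditions (i) and (iii) are exactly the hypotheses of Proposition~\ref{centralsimpleIndex} for the $H$-action on $V$: $H$ acts absolutely irreducibly, and every proper (hence every maximal) subgroup of $H$ has index not dividing $N$. Since every $G$-normal subalgebra of $\End_F(V)$ is \emph{a fortiori} $H$-normal, the proposition shows at once that every $H$-normal subalgebra is a central simple $F$-algebra, which gives (e), and in particular our specific $R$ is central simple. Because $\mathrm{Br}(F)=0$, there is an isomorphism $R\cong\Mat_a(F)$ for some $a\ge 1$, giving the first assertion of (b). The unique simple $R$-module is $F^a$, so $V\cong(F^a)^b$ as an $R$-module for some $b\ge 1$; comparing $F$-dimensions yields $N=ab$, which is (a), and the $R$-module $V$ is semisimple, isotypic, and isomorphic to $R^b$. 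Computing $\tilde R=\End_R(V)\cong\End_{\Mat_a(F)}((F^a)^b)\cong\Mat_b(F)$ establishes the structural claim of (c), while its $G$-normality is a short check: for $\sigma\in G$, $u\in\tilde R$, $r\in R$, the $G$-normality of $R$ places $\rho(\sigma)^{-1}r\rho(\sigma)\in R$, so $u$ commutes with it, and conjugating back shows $\rho(\sigma)u\rho(\sigma)^{-1}$ commutes with $r$.

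For part (d) the main work is to analyse the kernel $K:=\ker(\mathrm{Ad}_R)=\{\sigma\in G:\rho(\sigma)\in\tilde R\}$, which is normal in $G$. Then $K\cap H$ is a normal subgroup of the simple non-abelian group $H$, so equals $\{1\}$ or $H$. If $K\cap H=H$, then $\rho(H)\subset\tilde R$, so $\tilde R$ lies in the centralizer of $\rho(H)$, which by absolute simplicity equals $F\cdot\Id$; this forces $\tilde R=F$ and $b=1$, contradicting $b\ne 1$. Hence $K\cap H=\{1\}$. If $H=G$ this already gives $K=\{1\}$; otherwise hypothesis (ii) says the only proper nontrivial normal subgroup of $G$ is $H$, so $K\in\{\{1\},H,G\}$, and $K\cap H=\{1\}$ together with $H\ne\{1\}$ rules out $K=H$. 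If $K=G$ then all of $\rho(G)$ commutes with $R$, so $R\subset\End_G(V)\subset\End_H(V)=F\cdot\Id$, giving $a=1$, again a contradiction. Thus $K=\{1\}$ and $\mathrm{Ad}_R$ is injective; swapping the roles of $R$ and $\tilde R$ (using $a\ne 1$ in place of $b\ne 1$) gives the injectivity of $\mathrm{Ad}_{\tilde R}$. Finally, $H$ is perfect (being simple non-abelian), and the quotient $\PGL(a,F)/\PSL(a,F)\cong F^{\ast}/(F^{\ast})^a$ is abelian; hence any homomorphism from $H$ into $\PGL(a,F)$ lands in $\PSL(a,F)$, yielding $\mathrm{Ad}_R(H)\subset\PSL(a,F)$, and the same argument gives $\mathrm{Ad}_{\tilde R}(H)\subset\PSL(b,F)$.

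The main obstacle is the kernel analysis in (d): one must carefully combine the assumption that $H$ is either all of $G$ or the unique proper nontrivial normal subgroup with the hypothesis $a,b\ne 1$ to exclude every non-trivial possibility for $K$, using absolute simplicity on one side to prevent $K\supset H$ and the trivial-centralizer statement $\End_H(V)=F\cdot\Id$ on the other to prevent $K=G$. The structural assertions (a), (b), (c), (e) are then formal consequences of the central simplicity, the triviality of $\mathrm{Br}(F)$, and the Artin--Wedderburn description of modules over matrix algebras.
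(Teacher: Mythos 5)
Your proposal is substantively correct but takes a genuinely different route from the paper's. The paper does \emph{not} invoke Proposition~\ref{centralsimpleIndex} inside the proof of Theorem~\ref{AdGH}; instead it re-derives isotypicity and central simplicity from scratch via Lemma~\ref{74} (Steps~1--3), explicitly introducing $k=\End_R(W)$, showing $k$ is a field because $\mathrm{Br}(F)=0$, and then killing the $H$-action on $k$ using hypothesis~(i). You short-circuit all of this by observing that condition~(i) forces every maximal subgroup of $H$ to have index not dividing $N$, so Proposition~\ref{centralsimpleIndex} applies directly to $H$ (and $R$ is \emph{a fortiori} $H$-normal), yielding central simplicity in one stroke; the Wedderburn description and parts~(a)--(c) then follow formally. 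This is a cleaner modularization, and arguably the paper could have been organized the same way. Your treatment of part~(d) is also more explicit than the paper's rather terse ``in light of (ii)'': you enumerate the possible values of $\ker(\mathrm{Ad}_R)$ using the simplicity of $H$ and hypothesis~(ii), which makes the logic transparent. The final PSL claim is handled the same way in both proofs (perfectness of $H$ versus abelianness of $\mathrm{PGL}/\mathrm{PSL}$).

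One step in your part~(d) is written incorrectly, though the conclusion survives. From $K\cap H=H$ you deduce $\rho(H)\subset\tilde R$ and then assert ``so $\tilde R$ lies in the centralizer of $\rho(H)$,'' concluding $\tilde R=F\cdot\Id$ and $b=1$. Containment of $\rho(H)$ in $\tilde R$ does not make $\tilde R$ commute with $\rho(H)$. What does follow is that $R$, being the commutant of $\tilde R$, commutes with $\rho(H)$; hence $R\subset\End_H(V)=F\cdot\Id$ and $a=1$. Since part~(d) assumes \emph{both} $a\neq 1$ and $b\neq 1$, you still reach a contradiction, so this is a repairable slip rather than a fatal gap --- but the inference as stated is a non sequitur and the correct target is $a$, not $b$. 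Note also that your separate treatment of the case $K=G$ is redundant once $K\cap H=\{1\}$ is known, since $K=G$ would force $K\cap H=H\neq\{1\}$.
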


\begin{proof}
{\bf Step 0}.
Since $H$ is a simple group, $V$ is a faithful $H$-module. In light of (ii), $V$ is a faithful $G$-module.

Clearly, $V$ is a faithful $R$-module. Since $R$ is $G$-normal,
\begin{equation}
\label{NormalEq}
\rho(\sigma) R \rho(\sigma)^{-1} = R \quad
\forall \sigma \in G.  
\end{equation}
It follows from (ii) that 
$$H=[H,H]\subset [G,G]\subset G$$
and either $G=H$ or $G/H$ is a finite simple group (e.g., a cyclic group of prime order).

{\bf Step 1}. By Lemma \ref{74}(i), $V$ is a {\sl semisimple} $R$-module.

{\bf Step 2}.   In light of Lemma \ref{74}(ii), property (i) implies that
the $R$-module $V$ is {\sl isotypic}.

{\bf Step 3}. Since the faithful $R$-module $V$ is an isotypic, there exist a faithful simple
$R$-module $W$   and a positive integer $b$ such that $V \cong W^b$. If we put $a=\dim_F(W)$ then we get
    $$ba=b \cdot \dim_F(W)=\dim_F(V)=N.$$
Clearly, $\End_R(V)$ is isomorphic to the matrix algebra
$\Mat_b(\End_R(W))$  of size $b$ over $\End_R(W)$.

Consider the centralizer
    $$k:=\End_R(W)$$
    of $R$ in $\End_F(W)$.
Since $W$ is a simple $R$-module, $k$ is a finite-dimensional division algebra over $F$. Since $\mathrm{Br}(F)=\{0\}$, $k$ must be a field.
Hence, the automorphism group $\Aut_F(k)$ of the $F$-algebra $k$ is actually the automorphism group $\Aut(k/F)$
 of the field extension $k/F$. It follows that  $\Aut_F(k)=\Aut(k/F)$ is  finite and its order divides the degree $[k:F]$.
We have
    $$\tilde{R}=\End_R(V) \cong \Mat_b(k).$$
Clearly,  the $F$-subalgebra $\tilde{R}=\End_R(V) \subset \End_{F}(V)$ is stable under the ``adjoint action'' of $G$, which gives rise to the group homomorphism
$$\mathrm{Ad}_{\tilde{R}}: G \to\Aut(\tilde{R}).$$
 Since $k$ is the center of $\Mat_b(k)$, it is stable under the action of $G$ and of its subgroup $H$. This gives rise to the group homomorphism 
 $$H\to\Aut(k/F), \  h \mapsto \{\lambda \mapsto  \rho(h)\lambda \rho(h)^{-1} \ \forall  \lambda\in k\} \ \forall h \in H,$$ whose 
  kernel $H^{\prime}$ has index $[H:H^{\prime}]$ dividing  $[k:F]$. Since $V$ carries the natural structure of a $k$-vector space, 
  $[k:F]$ divides $\dim_F(V)=N$, the index  $[H:H^{\prime}]$ divides $N$. In light of (i), $H^{\prime}=H$, i.e., the homomorphism is trivial.
  This means that
center $k$ of $\End_R(V)$ commutes with $\rho(H)$. Since $\End_H(V)=F$, we have $k=F$. This implies that
$\End_R(V) \cong \Mat_b(F)$ and
$$\mathrm{Ad}_{\tilde{R}}: G \to \Aut_F(\tilde{R})=\tilde{R}^{*}/F^{*}\Id \cong\GL(b,F)/F^*=\PGL(b,F)$$
 kills $H$ if and only if
$\tilde{R}=\End_R(V) \subset \End_H(V)=F \cdot \Id$.
 Since $\tilde{R}=\End_R(V) \cong \Mat_b(F)$, the homomorphism $\mathrm{Ad}_{\tilde{R}}$ kills $H$  if and only if $b=1$, i.e., $V$ is an absolutely simple (faithful) $R$-module.
 This means that if $b>1$ then   $\mathrm{Ad}_{\tilde{R}}$ does {\sl not} kill $H$, i.e., the normal subgroup $\ker(\mathrm{Ad}_{\tilde{R}})$ of $G$ does {\sl not} contain $H$.
  In light of (ii), this implies that the group homomorphism
 $$\mathrm{Ad}_{\tilde{R}}: G \to \Aut(\tilde{R}) \cong \PGL(b,F)$$
 is {\sl injective} if $b>1$.

Since $V$ is a semisimple module over the subalgebra $R$ of $\End_F(V)$ and $\tilde{R}$ is the centralizer of $R$ in $\End_F(V)$,
 it follows from the Jacobson density theorem that
 $$R=\End_{\tilde{R}}(V) \cong \End_F(W) \cong \Mat_a(F).$$
The ``adjoint action''   of $G$ on $R$ gives rise to the homomorphism
    $$\mathrm{Ad}_R: G  \to  \Aut(R)=R^{*}/F^{*}\Id \cong \PGL(a,F).$$
 Clearly, $\mathrm{Ad}_R$ kills $H$ if and only if $R$ commutes with $\rho(H)$, i.e., $R=F \cdot\Id$, which is equivalent to the equality $a=1$.
 This means that if $a>1$ then   $\mathrm{Ad}_R$ does {\sl not} kill $H$, i.e., the normal subgroup $\ker(\mathrm{Ad}_{R})$ of $G$ 
 does {\sl not} contain $H$.
  In light of (ii), this implies that the group homomorphism
 $$\mathrm{Ad}_R: G \to \Aut(\tilde{R}) \cong \PGL(a,F)$$
 is {\sl injective} if $a>1$.
 
 To summarize: a normal $G$-subalgebra $R$ is {\sl not} obvious if  and only if 
 $$a>1,b>1.$$
  If this is the case then both group homomorphisms
 $$\mathrm{Ad}_R: G \to \PGL(a,F), \  \mathrm{Ad}_{\tilde{R}}: G \to \PGL(b,F)$$
 are {\sl injective}.  
 
 The last assertions of  Theorem \ref{AdGH}(d) about the images of $H$ follow from the  equality $H=[H,H]$ and the inclusions
 $$[\mathrm{GL}(a,F), \mathrm{GL}(a,F)]\subset \mathrm{SL}(a,F), \ [\mathrm{GL}(b,F), \mathrm{GL}(b,F)]\subset \mathrm{SL}(b,F).$$
 The assertion (e) follows readily from the second assertion of (b) (if we replace $G$ by $H$).
 This ends the proof.
\end{proof}

\begin{cor}
\label{AdGHcor}
Keeping the assumption and notation of Theorem \ref{AdGH}, assume additionally that $N=2\ell$ where $\ell$ is a prime.
If the $H$-module $V$ is not very simple then there exist  group embeddings
$$G \hookrightarrow \mathrm{PGL}(2,F), \ H \hookrightarrow \mathrm{PSL}(2,F).$$
\end{cor}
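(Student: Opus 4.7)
The plan is to read off the conclusion from Theorem \ref{AdGH} once we exploit the arithmetic of $N=2\ell$. Since the $H$-module $V$ is assumed not to be very simple, the definition of very simplicity applied to $V$ as a $G$-module (using the fact that $H\subset G$ is normal, so every $H$-normal subalgebra is automatically $G$-normal when $H=G$, and in the general case we invoke Theorem \ref{AdGH} with the roles as stated) produces a $G$-normal subalgebra $R\subset\End_F(V)$ that is neither $F\cdot\Id$ nor the full $\End_F(V)$. By parts (a)--(c) of Theorem \ref{AdGH}, we then have $R\cong\Mat_a(F)$ and $\tilde R:=\End_R(V)\cong\Mat_b(F)$ with $N=ab$, and non-obviousness forces $a>1$ and $b>1$.

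Next I would use the hypothesis $N=2\ell$ with $\ell$ prime. Any factorization $ab=2\ell$ with $a,b>1$ must have $\{a,b\}=\{2,\ell\}$: if $\ell=2$ then $N=4$ and the only option is $a=b=2$, while if $\ell$ is an odd prime then exactly one of $a,b$ equals $2$ and the other equals $\ell$. Either way, one of the two algebras $R,\tilde R$ is isomorphic to $\Mat_2(F)$.

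Finally, part (d) of Theorem \ref{AdGH} asserts that both $\mathrm{Ad}_R\colon G\to\PGL(a,F)$ and $\mathrm{Ad}_{\tilde R}\colon G\to\PGL(b,F)$ are injective (this applies because $R$ is not obvious, so $a>1$ and $b>1$), and that their restrictions to $H$ land in $\PSL(a,F)$ and $\PSL(b,F)$, respectively. Choosing whichever of the two matrix algebras has size $2$ produces the desired embeddings $G\hookrightarrow\PGL(2,F)$ and $H\hookrightarrow\PSL(2,F)$. The argument is essentially a dimension bookkeeping exercise on top of Theorem \ref{AdGH}; no step presents a genuine obstacle beyond correctly identifying which of $a,b$ equals $2$ and transporting the injectivity statements of (d) across.
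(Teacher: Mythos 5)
Your argument reproduces the paper's proof of this corollary essentially step for step: extract a non-obvious normal subalgebra $R$ from the failure of very simplicity, read off $R\cong\Mat_a(F)$ and $\tilde R\cong\Mat_b(F)$ with $ab=N$ and $a,b>1$ from Theorem~\ref{AdGH}, use $N=2\ell$ to force one of $a,b$ to be $2$, and cite part~(d) for the injectivity of $\mathrm{Ad}$ into $\PGL$ and the containment of the image of $H$ in $\PSL$. One remark: your parenthetical attempt to reconcile ``$H$-normal'' with ``$G$-normal'' (``every $H$-normal subalgebra is automatically $G$-normal when $H=G$, and in the general case we invoke Theorem~\ref{AdGH} with the roles as stated'') does not actually settle the point; the clean reading --- and what the paper implicitly does --- is to apply Theorem~\ref{AdGH} with $H$ in the role of $G$, which its hypotheses permit and which yields the $H\hookrightarrow\PSL(2,F)$ embedding (the $G$-embedding genuinely requires a non-obvious $G$-normal $R$, which is what holds in the downstream applications where one actually assumes the $G$-module $V$ is not very simple).
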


\begin{proof}
Let $R$ be a $H$-normal non-obvious $H$-subalgebra  and $\tilde{R}=\End_R(V)$.  By Theorem \ref{AdGHcor},
there are positive integers $a$ and $b$ such that
$$ab=N, a>1, b>1; R \cong \Mat_a(F), \tilde{R} \cong \Mat_b(F).$$
Our conditions on $N$ imply  that either $a=2, b=\ell$ or $a=\ell, b=2$. By Theorem \ref{AdGH}, there are group embeddings
$$G \hookrightarrow \mathrm{PGL}(a,F), \ H \hookrightarrow \mathrm{PSL}(a,F)$$
and
$$G \hookrightarrow \mathrm{PGL}(b,F), \ H \hookrightarrow \mathrm{PSL}(b,F).$$
Since either $a$ or $b$ is $2$, there are group embeddings 
$$G \hookrightarrow \mathrm{PGL}(2,F), \ H \hookrightarrow \mathrm{PSL}(2,F).$$
\end{proof}

\begin{thm}
\label{A5}
Suppose that $n \ge 5$ is an integer, 
 $B$ is an $n$-element set, and $p$ is a prime.  
 Let us consider the vector space  $\left(\F_p^{B}\right)^{00}$ over the field $\F_p$ endowed
 with the natural structure of a $ \Perm(B) $-module  (see Definition \ref{heartDef}),
and let 
$$\rho: \Perm(B) \to \Aut_{\F_p}\left(\left(\F_p^{B}\right)^{00}\right)$$
be the  corresponding structure homomorphism.

 Then:
 \begin{itemize}
 \item[(i)]
The $\Perm(B)$-module $\left(\F_p^{B}\right)^{00}$ is very simple.
 \item[(ii)]
The $\Alt(B)$-module $\left(\F_p^{B}\right)^{00}$ is very simple if and only if  either $n>5$ or
$$n=5,  \ p \not\equiv \pm 1 \ \bmod 5.$$
\item[(iii)] Suppose that 
$$n=5, \ p \equiv \pm 1 \ \bmod 5.$$ 
and $R \subset \End_{\F_p}(\left(\F_p^{B}\right)^{00})$ is a $\Alt(B)$-normal subalgebra.

 Then either $R=\F_p\cdot \Id$, or $R=\End_{F_p}(\left(\F_p^{B}\right)^{00})$, or the $\F_p$-algebra $R$ is isomorphic to the matrix algebra 
 $\Mat_2(\F_p)$ of size 2 over $\F_p$.
 \item[(iv)]
 The $\Alt(B)$-module $\left(\F_p^{B}\right)^{00}$ is central  simple
\end{itemize}
\end{thm}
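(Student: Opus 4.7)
The plan is to combine the general machinery of Section \ref{verySR}---notably Theorem \ref{AdGH}, Corollary \ref{AdGHcor}, and Proposition \ref{centralsimpleIndex}---with Dickson's classification of finite subgroups of $\PSL(2,\F_p)$ and $\PGL(2,\F_p)$, together with the explicit tensor decomposition in Remark \ref{A5notVery}.

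As setup, I note that by Lemma \ref{Anp}, $V$ is an absolutely simple $\Alt(B)$-module of dimension $N=n-1$ (if $p\nmid n$) or $N=n-2$ (if $p\mid n$); in particular $N<n$ for $n\ge 5$. The minimal index of a proper subgroup of $\An$ is $n$, so hypothesis (i) of Theorem \ref{AdGH} with $H=\Alt(B)$ holds (no proper subgroup has index $\le N$, hence none has index dividing $N$), and the remaining hypotheses are immediate from the simplicity of $\An$ for $n\ge 5$. Part (iv) then follows at once from Proposition \ref{centralsimpleIndex}. Part (iii) is a direct application of Corollary \ref{AdGHcor}: for $n=5$ and $p\equiv\pm 1\bmod 5$ we have $p>5$ and $N=4=2\cdot 2$, so a non-obvious $\Alt(B)$-normal subalgebra $R\cong\Mat_a(\F_p)$ must satisfy $ab=4$ with $a,b>1$, forcing $a=b=2$.

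For (ii), the case $n>5$ is \cite[Th.~4.7]{ZarhinCrelle}. When $n=5$ and $p=5$ we have $N=3$, so any non-obvious normal subalgebra would yield $ab=3$ with $a,b>1$, which is impossible; hence $V$ is very simple. When $n=5$, $p\ne 5$ and $p\not\equiv\pm 1\bmod 5$, part (iii) says any non-obvious $\Alt(B)$-normal subalgebra $R\cong\Mat_2(\F_p)$ yields (by Theorem \ref{AdGH}) an embedding $\mathbf{A}_5\hookrightarrow\PSL(2,\F_p)$, which Dickson's classification rules out; so $V$ is very simple. Conversely, when $n=5$ and $p\equiv\pm 1\bmod 5$, Remark \ref{A5notVery} furnishes an isomorphism $V\cong\bar V_1\otimes_{\F_p}\bar V_2$ of $\SL(2,\F_5)$-modules with each $\bar V_i$ two-dimensional; since the central element $-\Id\in\SL(2,\F_5)$ acts by $-\Id$ on each factor, conjugation on $\End_{\F_p}(\bar V_1)\otimes 1\cong\Mat_2(\F_p)$ descends to $\mathbf{A}_5=\SL(2,\F_5)/\{\pm 1\}$, producing a non-obvious $\Alt(B)$-normal subalgebra and showing $V$ is not very simple.

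Finally, (i) reduces via Remark \ref{image}(iii) to the single remaining case $n=5$, $p\equiv\pm 1\bmod 5$; there Corollary \ref{AdGHcor} applied with $G=\Perm(B)=\mathbf{S}_5$ and $H=\mathbf{A}_5$ would force $\mathbf{S}_5\hookrightarrow\PGL(2,\F_p)$, which Dickson's classification again rules out. The main obstacle is the careful invocation of Dickson's classification to exclude the embeddings $\mathbf{A}_5\hookrightarrow\PSL(2,\F_p)$ (for odd $p\ne 5$ with $p\not\equiv\pm 1\bmod 5$) and $\mathbf{S}_5\hookrightarrow\PGL(2,\F_p)$ (for every odd prime $p\ne 5$); the remainder is an essentially mechanical application of the framework of Section \ref{verySR}.
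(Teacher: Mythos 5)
Your proposal is correct and follows essentially the same strategy as the paper: reduce to the case $q=p$, $n=5$, invoke Theorem \ref{AdGH} to force a non-obvious $\Alt(B)$-normal subalgebra to be a $2\times 2$ matrix algebra, rule out the resulting embeddings into $\PSL(2,\F_p)$ and $\PGL(2,\F_p)$ via Dickson's classification (the paper cites the relevant theorems in Suzuki's book), and use the tensor decomposition of Remark \ref{A5notVery} to produce a non-obvious normal subalgebra when $p\equiv\pm 1\bmod 5$. Two small differences in routing, both benign: you cite \cite[Th.~4.7]{ZarhinCrelle} directly for all $n>5$ (valid, since Remark \ref{errorA5} locates the error of that reference precisely at $n=5,\,p>5$), whereas the paper re-derives the cases $n=6,7$ from scratch via Remark \ref{DoneN} and Corollary \ref{AdGHcor}; and you obtain part (iv) directly from Proposition \ref{centralsimpleIndex} (noting that every maximal subgroup of $\mathbf{A}_n$ has index $\ge n>N$), whereas the paper deduces (iv) from the already established parts (ii) and (iii). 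One cosmetic slip: in part (iii) you cite Corollary \ref{AdGHcor}, but the structure result $R\cong\Mat_a(\F_p)$ with $ab=N$ and $a,b>1$ is the content of Theorem \ref{AdGH}(a)--(d); the corollary only records the resulting group embeddings. The underlying computation $a=b=2$ is of course correct, and your observation about $-\Id$ acting by a scalar (hence trivially by conjugation) to descend the normal subalgebra to $\mathbf{A}_5$ is a clean way to package what the paper cites from \cite[Example 3.1(ii)]{ZarhinClifford}.
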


\begin{rem}
\label{DoneN}

The assertion of Theorem \ref{A5} was earlier proven in the following cases.
\begin{itemize}
\item[(A)]
$p\in \{2,3\}$, see \cite[Ex. 7.2]{ZarhinM} and \cite[Cor. 4.3]{ZarhinCrelle}.
\item[(B)]
$p>3$ and $n \ge 8$, see \cite[Cor. 4.6]{ZarhinCrelle}.
\item[(C)]
$N=\dim_{\F_p}(\left(\F_p^{B}\right)^{00})$ is a prime. It follows readily from  \cite[Cor. 4.4(i)]{ZarhinCrelle} applied to $H=\mathrm{Alt}(B)$ and $V=(\F_p^{B})^{00}$.
(We may apply this result from \cite{ZarhinCrelle}, because  $\Alt(B)$ is a simple non-abelian group of order $n!/2$ and therefore its order is bigger that the order
of $\mathbf{S}_N$, since $N \le n-1$.)
\end{itemize}
 
So, In the course of the proof we may assume that 
\begin{equation}
\label{n567}
p>3; \  n\in \{5,6,7\}.
\end{equation}
\end{rem}

\begin{proof}[Proof of Theorem \ref{A5}]
We assume that \eqref{n567} holds.

{\bf Step 1}. First assume  that $p\mid n$. Then either $n=p=5$ or $n=p=7$. In both
cases $$N=\dim_{\F_p}((\F_p^{B})^{00})=n-2$$ is a prime. 
Now the
very simplicity of  the $\mathrm{Alt}(B)$-module $(\F_p^{B})^{00}$ follows from  Remark \ref{DoneN}(A).
So, we may assume that $p$ does {\sl not} divide $n$ and therefore 
$$N=n-1.$$

{\bf Step 2}. If $n=6$ then $N=5$ and the
very simplicity of  the $\mathrm{Alt}(B)$-module $(\F_p^{B})^{00}$ follows from  Remark \ref{DoneN}(C).
So, we may assume that 
$$n \in \{5,7\}.$$

{\bf Step 3}. Suppose that $n=7$. Then $N=6 =2\times 3$ where  $3$ is a prime. 
It follows from Corollary  \ref{AdGHcor} that if  the $\mathrm{Alt}(B)$-module $(\F_p^{B})^{00}$ is {\sl not} very simple
then there is a  group homomorphism
$$\mathrm{Ad}_R: \mathrm{Alt}(B) \hookrightarrow \mathrm{PSL}(2,\F_p).$$
However,  it is known \cite[Th. 6.25 on p. 412 and Th. 626 on p. 414]{Suzuki} that $ \mathrm{PSL}(2,\F_p)$
does {\sl not} contain a subgroup isomorphic to $\mathbf{A}_7 $.  Since $\mathrm{Alt}(B) \cong \mathbf{A}_7$, we get a contradiction, which implies that
the $\mathrm{Alt}(B)$-module $(\F_p^{B})^{00}$ is  very simple if $n=7$.

{\bf Step 4}. Suppose that $n=5$.  We are going to apply  Corollary  \ref{AdGHcor} to $H=\Alt(B)$, $G=\Perm(B)$ or $\Alt(B)$, and $V=(\F_p^{B})^{00}$.

Since $p$ does {\sl not} divide $n=5$, we get $p > 5$, and $n-1=4=2 \times 2$ where $2$ is a prime. 

\begin{itemize}
\item
Suppose that the $\mathrm{Perm}(B)$-module $(\F_p^{B})^{00}$ is {\sl not} very simple. It follows from Corollary  \ref{AdGHcor} (applied to $G=\Perm(B), H=\Alt(B)$)
that there is a group embedding
$$\mathrm{Ad}_R: \mathrm{Perm}(B) \hookrightarrow \mathrm{PGL}(2,\F_p).$$
This implies that $\mathrm{PGL}(2,\F_p)$ contains a subgroup isomorphic to $\mathrm{S}_5$, because  $\mathrm{Perm}(B)\cong \mathbf{S}_5$.
Since $p>5$, the order $120$ of $\mathbf{S}_5$ is {\sl not} divisible by $p$. However, there are no finite subgroups of $\mathrm{PGL}(2,\F_p)$
that are isomorphic to $\mathbf{S}_5$ \cite[Th. 6.25 on p. 412 and Th. 626 on p. 414]{Suzuki}; see  also \cite[Sect. 2.5]{Serre1972}. The obtained contradiction proves that 
the $\mathrm{Perm}(B)$-module $(\F_p^{B})^{00}$ is  very simple if $n=5$.

\item
It follows from Corollary  \ref{AdGHcor} (applied to $G=H=\Alt(B)$ and $V=(\F_p^{B})^{00}$) that if  the $\mathrm{Alt}(B)$-module $(\F_p^{B})^{00}$ is {\sl not} very simple
then there  is an injective  group homomorphism
$$ \mathrm{Ad}_R: \mathrm{Alt}(B) \hookrightarrow \mathrm{PSL}(2,\F_p).$$
Then the order $60$ of the group $\mathrm{Alt}(B)$ divides the order $(p^2-1)p/2$ of the group $\mathrm{PSL}(2,\F_p)$. This implies that $5$ divides $p^2-1=(p+1)(p-1)$, i.e.,
$p \equiv \pm 1 \ \bmod 5$. This implies that if $n=5$ and $p \not\equiv \pm 1 \ \bmod 5$ then $\mathrm{Alt}(B)$-module $(\F_p^{B})^{00}$ is  very simple.
\end{itemize}

{\bf Step 5}. Suppose that $n=5$ and $p \equiv \pm 1 \ \bmod 5$. Let us prove that the $\mathrm{Alt}(B)$-module $(\F_p^{B})^{00}=(\F_p^{B})^{0}$ is {\sl not} very simple.
Recall (Remark \ref{Anp})  that there is a surjective homomorphism  $\SL(2,\F_5) \twoheadrightarrow \mathbf{A}_5$, and there are $\SL(2,\F_5) $-modules $\bar{V}_1$ and $\bar{V}_2$
with
$$\dim_{\F_p}(\bar{V}_1)=\dim_{\F_p}(\bar{V}_2)=2,$$
and an isomorphism of $\SL(2,\F_5) $-modules $(\F_p^{B})^{0}\cong \bar{V_1}\otimes_{\F_p}\bar{V}_2$. This isomorphism induces an isomorphism of $\F_p$-algebras
$$\End_{\F_p}((\F_p^{B})^{0})=\End_{\F_p}(\bar{V}_1)\otimes_{\F_p}\End_{\F_p}(\bar{V}_2),$$
under which (the images of) the subalgebras
$$R= \End_{\F_p}(\bar{V}_1)\otimes 1,  \ \tilde{R}=1\otimes  \End_{\F_p}(\bar{V}_2)$$
are $\mathrm{Alt}(B)$-normal subalgebras of $\End_{\F_p}((\F_p^{B})^{0})$, see \cite[Example 3.1(ii)]{ZarhinClifford}. In particular, the
$\mathrm{Alt}(B)$-module $(\F_p^{B})^{00}=(\F_p^{B})^{0}$ is {\sl not} very simple.

On the other hand, it follows from Theorem \ref{AdGH} that if $R$ is a {\sl non}-obvious $\mathrm{Alt}(B)$-normal subalgebra of $\End_{\F_p}((\F_p^{B})^{0})$
then $R \cong \Mat_a(\F_p)$ where a positive integer $a$ is a {\sl proper} divisor of 
$$\dim_{\F_p}((\F_p^{B})^{0})=4=2^2.$$
This implies that $a=2$ and $R \cong \Mat_2(\F_p)$.

The  assertion (iv) of Theorem \ref{A5} follows readily from already proven (ii) and (iii).
\end{proof}

\begin{thm}
\label{mathieuCentral}
Let $n \in \{11,12,22,23,24\}$.  Let $B$ be an   $n$-element set $B$ and 
$G\subset \mathrm{Perm}(B)$ the corresponding Mathieu group
  $\mathbf{M}_n$, which acts doubly transitively on $B$.
Let $p$ be an odd prime. If $n=11$, then we assume additionally that $p>3$.

Then 
\begin{itemize}
\item
the $G$-module $(\F_p^{B})^{00}$ is central simple;
\item
if $n \ne p+1$ and $n-1$ is divisible by $p$, then the $G$-module $(\F_p^{B})^{00}$ is very simple. 
\end{itemize}
\end{thm}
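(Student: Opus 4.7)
The plan is to apply Proposition \ref{centralsimpleIndex} for the central-simple assertion and Theorem \ref{AdGH} (together with explicit order arithmetic) for the very-simple assertion, in both cases with $H = G = \mathbf{M}_n$. Three inputs are needed: $\mathbf{M}_n$ is simple non-abelian and doubly transitive on $B$; the minimum index of a proper subgroup of $\mathbf{M}_n$ equals $n$ (realized by the point stabilizer, read off from the ATLAS list of maximal subgroups); and the heart $(\F_p^{B})^{00}$ is absolutely irreducible over $\F_p$ in the relevant characteristics. The last follows from the modular representation theory of the Mathieu groups (Mortimer's classification of modular permutation representations of doubly transitive groups together with the modular Atlas).

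For central simplicity, set $N := \dim_{\F_p}(\F_p^{B})^{00}$, so $N = n-1$ if $p \nmid n$ and $N = n-2$ if $p \mid n$. Every proper subgroup of $\mathbf{M}_n$ has index at least $n > N$, hence no such index divides $N$. Combined with absolute irreducibility and $\mathrm{Br}(\F_p) = \{0\}$, Proposition \ref{centralsimpleIndex} (applied with $H = \mathbf{M}_n$) immediately yields that the $\mathbf{M}_n$-module $(\F_p^{B})^{00}$ is central simple.

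For the very-simple assertion, suppose $n \ne p+1$ and $p \mid n-1$. Since $p \mid n-1$ automatically forces $p \nmid n$, a direct enumeration (using $p$ odd, and $p > 3$ when $n = 11$) leaves only the four pairs $(n, p) \in \{(11,5),\ (22,3),\ (22,7),\ (23,11)\}$, with $N = 10,\ 21,\ 21,\ 22$ respectively. Assume for contradiction that the heart is not very simple. The hypotheses of Theorem \ref{AdGH} hold with $H = G = \mathbf{M}_n$ (hypothesis (i) being again $n > N$), so we obtain a factorization $N = ab$ with $a, b > 1$ and an injective homomorphism $\mathbf{M}_n \hookrightarrow \mathrm{PSL}(a, \F_p)$. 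Taking the smaller factor $a$: for $(n,p,a) \in \{(11,5,2),\ (23,11,2),\ (22,3,3)\}$, Lagrange's theorem already rules out the embedding, since $|\mathrm{PSL}(a, \F_p)|$ equals $60,\ 660,\ 5616$ respectively, each strictly below the corresponding $|\mathbf{M}_n|$; for the remaining triple $(22, 7, 3)$, although $|\mathrm{PSL}(3, \F_7)|$ exceeds $|\mathbf{M}_{22}|$, the prime $11$ divides $|\mathbf{M}_{22}|$ whereas $11 \nmid 7^i - 1$ for $i = 2, 3$, so $11 \nmid |\mathrm{PSL}(3, \F_7)|$, again contradicting Lagrange.

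The chief obstacle is the absolute-irreducibility input: verifying that the heart of the natural permutation representation of $\mathbf{M}_n$ remains absolutely simple over $\F_p$ even when $p$ divides $|\mathbf{M}_n|$. Once this is granted, both assertions reduce to the structural results of Sections \ref{weightS}--\ref{verySR} combined with elementary subgroup-order arithmetic.
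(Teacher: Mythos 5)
Your proof is correct and follows essentially the same route as the paper: for central simplicity, combine the absolute irreducibility of $(\F_p^{B})^{00}$ (from Mortimer and Klemm) with the bound on minimal subgroup indices and Proposition~\ref{centralsimpleIndex}; for very simplicity in the exceptional cases, use Theorem~\ref{AdGH}(d) to produce an embedding $\mathbf{M}_n \hookrightarrow \mathrm{PSL}(a,\F_p)$ with $a$ a nontrivial divisor of $N$, then rule it out by order arithmetic. You enumerate the same four exceptional pairs $(11,5)$, $(22,3)$, $(22,7)$, $(23,11)$ and the same target groups. The only cosmetic difference is in how the nonexistence of the embedding is established: you invoke Lagrange's theorem (total order comparison) for three of the four triples, whereas the paper argues uniformly via the fact that $11$ (resp.\ the relevant prime) fails to divide the target order; for $(22,7)$, where the Lagrange comparison alone is insufficient, you correctly switch to the same $11$-divisibility argument the paper uses. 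Both refutation styles are elementary and valid, so this is not a genuinely different route.
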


\begin{proof}
It follows from (\cite{Klemm}, \cite[Table 1]{Mortimer})  that
the  absolutely simple $G=\mathbf{M}_n$-module $(\F_p^{B})^{00}$ is absolutely simple.
By \cite{Atlas}  the index of every maximal subgroup of $\mathbf{M}_n$ is at least 
$$n>n-1  \ge N=\dim_{\F_p}\left((\F_p^{B})^{00})\right)$$
(recall that $N$ is either $n-1$ or $n-2$).

In light of Proposition \ref{centralsimpleIndex}, the $G=\mathbf{M}_n$-module $(\F_p^{B})^{00}$ is central simple.
It remains to prove the very simplicity in
 the ``exceptional'' cases when $n \ne p+1$ and $n-1$ is divisible by $p$.
We prove that in all the exceptional  cases the $\mathbf{M}_n$-module $(\F_p^{B})^{00}$ is {\sl very} simple. After that the desired
result will follow from Theorem  \ref{handysup}.
\begin{itemize}
\item
$n=11$. Then $p=5$ and $n-1=2 \times 5$ where both $2$ and $5$ are primes. If the $\mathbf{M}_{11}$-module $(\F_5^{B})^{00}$ 
is {\sl not} very simple then it follows from Theorem \ref{AdGH}(iii-d)  that there is a group embedding $\mathbf{M}_{11}\hookrightarrow \mathrm{PSL}(2,\F_5)$, which is not true.
Hence, the $\mathbf{M}_{11}$-module $(\F_5^{B})^{00}$ 
is  very simple. 
\item
$n=12$. Then $n-1$ is a prime and there are no exceptional cases.
\item
$n=22$. Then $n-1=22-1=3\cdot 7$ where both $3$ and $7$ are primes.  Then $p=3$ or $7$. If the $\mathbf{M}_{22}$-module $(\F_{3}^{B})^{00}$  
is {\sl not} very simple then  it follows from Theorem \ref{AdGH}(iii-d)  that 
there is a group embedding $\mathbf{M}_{22}\hookrightarrow \mathrm{PSL}(3,\F_p)$. Such an embedding does not exist
if $p=3$, because the order of $\mathrm{PSL}(3,\F_3)$ is {\sl not} divisible by $11$ while $11$ divides the order of $\mathbf{M}_{22}$.
Hence, the $\mathbf{M}_{22}$-module $(\F_3^{B})^{00}$ 
is  very simple.

 Such an embedding does not exist
if $p=7$ as well, because the order of $\mathrm{PSL}(3,\F_7)$ is {\sl not} divisible by $11$, which divides the order of $\mathbf{M}_{22}$.
Hence, the $\mathbf{M}_{22}$-module $(\F_7^{B})^{00}$  
is also  very simple.
\item
$n=23$. Then $n-1=22=2\cdot 11$ where both $2$ and $11$ are primes. Then $p=11$. If the $\mathbf{M}_{23}$-module $(\F_{11}^{B})^{00}$  
is {\sl not} very simple then  it follows from Theorem \ref{AdGH}(iii-d)  that 
 there is a group embedding $\mathbf{M}_{23}\hookrightarrow \mathrm{PSL}(2,\F_{11})$. Such an embedding does not exist.
Hence, the $\mathbf{M}_{23}$-module  $(\F_{11}^{B})^{00}$  
is  very simple.
\item
$n=24$. Then $n-1=23$ is a prime and there are no exceptional cases.
\end{itemize}
\end{proof}

\begin{prop}
\label{HSCo3}
Let $G$ be a doubly transitive permutation subgroup of a $n$-element set $B$.  Let $p>3$ be a prime.
Suppose that $(n,G)$ enjoys one of the following properties.

\begin{enumerate}
\item[(1)]
 $n=176$ and $G$ is isomorphic to $\mathrm{HS}$;
\item[(2)]
 $n=276$ and $G$ is isomorphic to $\mathrm{Co}_3$.
\end{enumerate}
Then the $G$-module $(\F_p^{B})^{00}$ is central simple.
\end{prop}

\begin{proof}
It follows from \cite[Tables]{Mortimer} that in both cases the $G$-module $(\F_p^{B})^{00}$ is absolutely simple. 

{\bf Case 1}. According to the Atlas \cite{Atlas}, if $H$ is a maximal subgroup  of $\mathrm{HS}$ with index
$[\mathrm{HS}:H]<176$ then $[\mathrm{HS}:H]=100$, which divides neither $176-1$ nor $176-2$.
By Proposition  \ref{centralsimpleIndex}, the $G$-module $(\F_p^{B})^{00}$ is central simple.

{\bf Case 2}. According to the Atlas \cite{Atlas},  if $H$ is a maximal subgroup subgroup of $\mathrm{Co}_3$ then its index
$m=[\mathrm{Co}_3:H]$ is greater or equal than 276 \cite{Atlas}; in particular, it divides neither $276-1$ nor $276-2$.
By Proposition  \ref{centralsimpleIndex},  the $G$-module $(\F_p^{B})^{00}$ is  central simple as well.
\end{proof}

\begin{thm}
 \label{PSL2Central}
 \begin{itemize}
 \item[(i)]
 Let $\ell$ be a prime, $\mathfrak{r}$ a positive integer, and 
  $n=\mathfrak{q}+1$ where $\mathfrak{q}=\ell^{\mathfrak{r}}>11$.

\item[(ii)]
 Let $G$ be a subgroup of $\mathrm{Perm}(B)$.
 Suppose that $G$ contains a subgroup $H$ that
 is isomorphic to $\mathbf{L}_2(\mathfrak{q})=\mathrm{PSL}(2,\F_{\mathfrak{q}})$ where $\F_{\mathfrak{q}}$ is a $\mathfrak{q}$-element field.
 \end{itemize}
 
  If $p$ is an odd prime
 then the $G$-module  $(\F_p^{B})^{00}$ is central simple.
 \end{thm}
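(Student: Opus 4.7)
The plan is to apply Proposition~\ref{centralsimpleIndex} to the subgroup $H$ and then transfer the conclusion to $G$. Put $V:=(\F_p^B)^{00}$. Since $H\subset G$, every $G$-normal subalgebra $R\subset\End_{\F_p}(V)$ is in particular $H$-normal, so it suffices to show that the $H$-module $V$ is central simple.

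First I would pin down the action: as in the hypothesis of Theorem~\ref{PSL2}, under the given isomorphism $H\cong \mathrm{PSL}(2,\F_{\mathfrak{q}})$ the point stabilizers of $H$ on $B$ correspond to Borel subgroups, so the $H$-set $B$ is equivalent to the projective line $\mathbf{P}^1(\F_{\mathfrak{q}})$ with its natural sharply doubly transitive action, and $|B|=n=\mathfrak{q}+1$. The double transitivity together with the Klemm--Mortimer classification of hearts of doubly transitive permutation modules (\cite{Mortimer}) gives that the $H$-module $V$ is absolutely simple over $\F_p$ for every odd prime $p$, covering both the cross-characteristic case $p\ne \ell$ and the defining-characteristic case $p=\ell$.

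Next I would verify the index hypothesis of Proposition~\ref{centralsimpleIndex}. By Dickson's classification of the subgroups of $\mathrm{PSL}(2,\F_{\mathfrak{q}})$ (see e.g.\ \cite[Ch.~3, \S6]{Suzuki}), once $\mathfrak{q}>11$ the smallest index of a maximal subgroup of $H$ equals $\mathfrak{q}+1$, attained by the Borel subgroups; all the remaining candidate maximal subgroups (dihedral of orders $2(\mathfrak{q}\pm 1)/\gcd(2,\mathfrak{q}-1)$, subfield subgroups $\mathrm{PSL}(2,\F_{\mathfrak{q}_0})$, and the exceptional $A_4$, $S_4$, $A_5$) have strictly larger index. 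On the other hand, Definition~\ref{heartDef} gives
$$
N:=\dim_{\F_p}V=\begin{cases}n-1=\mathfrak{q},& p\nmid n,\\ n-2=\mathfrak{q}-1,& p\mid n,\end{cases}
$$
so $N\le \mathfrak{q}<\mathfrak{q}+1$. Therefore every maximal subgroup of $H$ has index strictly greater than $N$, and in particular does not divide $N$. Proposition~\ref{centralsimpleIndex} then yields that the $H$-module $V$ is central simple, whence the $G$-module $V$ is central simple as well.

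The principal obstacle is the bookkeeping for Dickson's theorem in the second step: the hypothesis $\mathfrak{q}>11$ is present precisely to rule out the small cases where an $A_5$ (or similar) maximal subgroup has index less than $\mathfrak{q}+1$---for example $\mathrm{PSL}(2,11)$ contains $A_5$ as a maximal subgroup of index $11$, which would defeat the argument. A secondary point requiring care is the absolute simplicity of the heart in the defining-characteristic case $p=\ell$, where Maschke is unavailable and one has to rely on the classification of hearts rather than on semisimplicity.
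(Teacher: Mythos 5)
Your proposal follows the same route as the paper: establish that the $H$-module $V=(\F_p^B)^{00}$ is absolutely simple, check the index condition on maximal subgroups of $H\cong\mathbf{L}_2(\mathfrak{q})$, apply Proposition~\ref{centralsimpleIndex}, and transfer to $G$. There is, however, a genuine gap in your first step. You write ``as in the hypothesis of Theorem~\ref{PSL2}, \dots\ the point stabilizers of $H$ on $B$ correspond to Borel subgroups.'' That is indeed a hypothesis of Theorem~\ref{PSL2}, but Theorem~\ref{PSL2Central} deliberately omits it: given only $|B|=\mathfrak{q}+1$ and an abstract embedding of $H\cong\mathbf{L}_2(\mathfrak{q})$ into $\Perm(B)$, you must \emph{derive} what the $H$-action on $B$ is before you can invoke double transitivity and Klemm--Mortimer. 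As written, your proof does not prove the theorem as stated.

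The derivation is exactly the Dickson fact you already quote in your second step, applied one more time. Since $\mathfrak{q}>11$, every proper subgroup of $\mathbf{L}_2(\mathfrak{q})$ has index at least $\mathfrak{q}+1=n$. If some $H$-orbit on $B$ had size $k$ with $1<k<n$, the stabilizer would be a proper subgroup of index $k<n$, impossible; and $H$ cannot act trivially because $H\subset\Perm(B)$ is a nontrivial subgroup. Hence $H$ is transitive on $B$, the point stabilizers have index exactly $\mathfrak{q}+1$, and by the same classification the unique conjugacy class of subgroups of that index consists of the Borel subgroups. Therefore the $H$-set $B$ is isomorphic to $\mathbb{P}^1(\F_{\mathfrak{q}})$ with the fractional-linear action, which is doubly transitive, and the rest of your argument (absolute simplicity of $V$ from Mortimer's table, $N\le\mathfrak{q}<\mathfrak{q}+1$ so no maximal subgroup has index dividing $N$, then Proposition~\ref{centralsimpleIndex}) goes through exactly as in the paper. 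One small side remark: the action of $\mathrm{PSL}(2,\F_{\mathfrak{q}})$ on $\mathbb{P}^1(\F_{\mathfrak{q}})$ is doubly transitive but not \emph{sharply} doubly transitive for odd $\mathfrak{q}>3$ (the group order exceeds $n(n-1)$); this does not affect the argument, since only double transitivity is used.
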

 
 \begin{proof}
 It suffices to check that the $H\cong \mathbf{L}_2(q)$-module  $(\F_p^{B})^{00}$ is central simple.
First, our conditions on $q$ imply that each subgroup of $\mathbf{L}_2(\mathfrak{q})$ (except $\mathbf{L}_2(\mathfrak{q})$ itself) has index $\ge \mathfrak{q}+1=n$
\cite[p. 414, (6.27)]{Suzuki}. This implies that $H$ acts transitively on the $(\mathfrak{q}+1)$-element
set $B$ and the stabilizer $H_b$ of any $b \in B$ has index $\mathfrak{q}+1$. It follows  from \cite[Th. 6.25 on p. 412]{Suzuki}
that $H_b\subset \mathbf{L}_2(\mathfrak{q})$ is  conjugate to the (Borel) subgroup of upper-triangular matrices modulo $\{\pm 1\}$.
It follows that the $ \mathbf{L}_2(\mathfrak{q})$-set $B$ is isomorphic to the {\sl projective line} $\mathbb{P}^1(\F_{\mathfrak{q}})$ with the standard
fractional-linear action of $ \mathbf{L}_2(\mathfrak{q})$, which is doubly transitive.

Notice that 
$$q+1=n>N=\dim_{\F_p}\left((\F_p^{B})^{00})\right),$$
because $N$ is either $n-1$ or $n-2$. It follows that the index of any maximal subgroup of $\mathbf{L}_2(\mathfrak{q})$ does {\sl not} divide $N$.
On the other hand, according to \cite[Table 1]{Mortimer}, the $H=\mathbf{L}_2(\mathfrak{q})$-module is absolutely simple. It follows now from Proposition
\ref{centralsimpleIndex} that the $H$-module $V$ is central simple.
 \end{proof}

\begin{thm}
\label{deformation}
Let $O$ be a Dedekind ring,  $T$ a locally free/projective $O$-module of finite positive rank $r$. Let $E$ be the field of fractions
of $O$, $\mathfrak{m}$ a maximal ideal in $O$ and $k=O/\mathfrak{m}$ its residue field. Let us consider the $r$-dimensional $E$-vector space
$T_E=T\otimes_O E$ and  the $r$-dimensional $k$-vector space
$T_{k}=T/\mathfrak{m}T=T\otimes_O k$.

Let $G$ be a group and
$\rho:G \to \Aut_{O}(T)$ be a group homomorphism ($O$-linear representation). Let us consider the corresponding $E$-linear representation of $G$
$$\rho_E: G \to \Aut_E(T_E),  \  \sigma \mapsto \{ t\otimes e \mapsto \rho(\sigma)(t)\otimes e \ \forall t\in T,e\in E\} \ \forall \sigma\in G$$
and the corresponding $k$-linear representation of $G$
$$\rho_{k}: G \to \Aut_{k}(T_k),  \  \sigma \mapsto \{ t\otimes c \mapsto \rho(\sigma)(t)\otimes c \ \forall t\in T, c \in k\} \ \forall \sigma\in G.$$

If $\rho_{k}$ is central simple (resp. very simple) then $\rho_E$ is  central simple (resp. very simple).

\end{thm}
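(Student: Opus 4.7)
The plan is to lift a structural property from $\rho_k$ to $\rho_E$ using the order-theoretic construction $R := R_E \cap \End_O(T)$. Given a $G$-normal $E$-subalgebra $R_E \subset \End_E(T_E)$, this $R$ will be a $G$-normal $O$-subalgebra of $\End_O(T)$ whose reduction $\bar{R} := R/\mathfrak{m}R$ embeds as a $G$-normal $k$-subalgebra of $\End_k(T_k)$, and I will read off properties of $R_E$ from those of $\bar{R}$.

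First I would verify the formal properties of $R$: it contains the identity, is closed under multiplication, is stable under $\rho(G)$-conjugation (as both $R_E$ and $\End_O(T)$ are), and satisfies $R \otimes_O E = R_E$ (by clearing denominators of any $u \in R_E$ by a nonzero $c \in O$). The key observation is that $R$ is saturated in the finitely generated projective $O$-module $\End_O(T)$, so the quotient $\End_O(T)/R$ is finitely generated and torsion-free; because $O$ is Dedekind, it is therefore projective, making $R$ a direct summand of $\End_O(T)$. Two consequences follow: (a) the reduction map $\bar{R} \hookrightarrow \End_O(T) \otimes_O k = \End_k(T_k)$ is injective, so $\bar{R}$ is genuinely a $G$-normal $k$-subalgebra; and (b) $R$ is locally free of some rank $s$ with $s = \dim_E R_E = \dim_k \bar{R}$.

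In the \emph{very simple} case, the hypothesis on $\rho_k$ forces $\bar{R} \in \{k\cdot\Id,\,\End_k(T_k)\}$, hence $s \in \{1,\,d^2\}$; combined with $E\cdot\Id \subseteq R_E \subseteq \End_E(T_E)$, this yields $R_E = E\cdot\Id$ or $R_E = \End_E(T_E)$, so $\rho_E$ is very simple. In the \emph{central simple} case, $\bar{R}$ is simple with center $k$, and the argument of Lemma \ref{redMatrix} applies verbatim with $\bar{R}$ in place of $\Mat_d(k)$, since that proof only uses simplicity and centrality of the reduction: the saturated center $C$ of $R$ reduces injectively into the center $k$ of $\bar{R}$, forcing $C = O$ and hence the center of $R_E$ to equal $E$; and any proper two-sided $E$-ideal $J_E \subset R_E$ meets $R$ in a saturated ideal $J$ whose reduction is a proper two-sided ideal of the simple $\bar{R}$, hence is zero, so $J_E = 0$.

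The only essential use of the Dedekind hypothesis, and the point I would expect to require the most care, is the saturated-implies-direct-summand step: it is what makes the reduction map $R \twoheadrightarrow \bar{R}$ faithful on subobjects (subalgebras, centers, two-sided ideals) and what makes the dimension comparison $\dim_E R_E = \dim_k \bar{R}$ valid, thereby allowing one to conclude integrality properties of $R_E$ from those of $\bar{R}$. The remainder of the argument is a formal exercise once this foundation is in place.
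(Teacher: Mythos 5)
Your proposal is correct and follows essentially the same route as the paper's proof: form $R := R_E \cap \End_O(T)$, use the Dedekind hypothesis to see that a saturated finitely generated submodule is a direct summand, so that reduction mod $\mathfrak{m}$ is injective and rank-preserving, then read the structure of $R_E$ off the structure of $\bar R$. The one cosmetic difference is that for the central-simple case you route through Lemma \ref{redMatrix} applied to the order $R \subset R_E$ (correctly noting that its proof only uses simplicity and centrality of the reduction, not that it is a full matrix algebra), whereas the paper re-runs that center/ideal argument inline with explicit direct-sum decompositions $\Lambda = C \oplus \mathfrak{P}$, $R = I \oplus J$; the content is identical.
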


\begin{proof}
We view $T=T\otimes 1$ as a certain  $G$-invariant lattice in $T\otimes_O E=T_E$ and 
$\End_O(T)=\End_O(T)\otimes 1$ as a certain  $G$-invariant lattice (subalgebra) in
$\End_O(T)\otimes_O E=\End_E(T_E)$.

Let $R_E$ be a $G$-normal  $E$-subalgebra of $\End_E(T_E)$. Let $C_E$ be the center of $R_E$ 
and $J_E$ a {\sl proper} ideal of $R_E$. We have
$$E\subset C, \ J \subset R_E, \ J \ne R_E.$$
Let us consider the $O$-subalgebra
$R:=R_E\cap \End_O(T)$.  Clearly,  $R$ is a saturated $O$-submodule of  $\End_O(T)$, i.e.,
the quotient  $\End_O(T)/R$ is torsion free (finitely generated) $O$-module.

The natural map
$$R\otimes_O E\to R_E, \ u \otimes e \mapsto e\cdot u$$
is an isomorphism of $E$-algebras. This implies that:

\begin{itemize}
\item[(i)]
$C:=C\cap  \End_O(T)$ is the center of $R$   that contains $O$ as a saturated $O$-submodule, i.e.,  the quotient $C/O$ is a 
 torsion free (finitely generated) $O$-module.  In addition, $C$ is a saturated $O$-submodule of $R$, i.e., $R/C$ is a torsion free
 finitely generated $O$-module.
\item[(ii)]
The intersection  $J:=J_E\cap  \End_O(T)$ is
a {\sl proper} ideal of $R$  that is a saturated $O$-submodule of $R$,
i.e., the quotient $R/J$ is a  torsion free (finitely generated) $O$-module.
\end{itemize}
Since  the $O$-modules $\End_O(T)/R$, $C/O$, $R/C$ and $R/J$ finitely generated torsion free, they are {\sl projective}, because the ring $O$ is  {\sl Dedekind}. This implies that there are locally free submodules
$R_1\subset \End_O(T)$, $C_1\subset C$,  $D\subset R$, and $I\subset R$ such that
\begin{equation}
\label{splittingVery}
\End_O(T)=R\oplus R_1, \ C=O\oplus C_1, \  R=C\oplus D, \  I\oplus J=R.
\end{equation}
Since $J$ is a proper ideal, $I \ne 0$.
Since $C_1$ and $J$ are torsion-free finitely generated $O$-modules they are also locally free/projective.
Now let us consider the $k=O/\mathfrak{m}$-subalgebra
$$R_{k}= R\otimes_O k \subset \End_O(T)\otimes_O k=\End_{k}(T_{k})$$
where $T_k:=T\otimes_O k$.
Clearly,
\begin{enumerate}
\item[(1)]
 $R_{k}$ is a $G$-normal subalgebra of $\End_{k}(T_{k})$;
 \item[(2)]
 $k\oplus (C_1\otimes_O k)$ lies in the center of $R_{k}$.
 \item[(3)]
 $R_{k}=(J\otimes_O k)\oplus (I\otimes_O k)\ne \{0\}$.
 This implies that
 $J_{k}=J\otimes_O k$ is a {\sl proper} two-sided ideal of $R_{k}$,
 because $I \ne \{0\}$ and therefore $I\otimes_O k\ne \{0\}$.
 \end{enumerate}
 Suppose that $\rho_{k}$ is  central simple. Then $R_{k}$ is a simple 
 $k$-algebra with center $k$. It follows that
 $$C_1\otimes_ O k=\{0\}, \ J_{k}=J\otimes_ O k=\{0\}.$$
 Since $C_1$ and $J$ are locally free, we conclude that
 $$C_1=\{0\},  \ J=\{0\},$$ 
 which implies that 
 $$J_E=\{0\},  C=O\oplus C_1=O\oplus \{0\}=O$$
 and therefore $C_E=E$. This means that $T_E$ is a central simple $E$-algebra, which proves 
 that $\rho_E$ is also central simple.
 
 Assume now that $\rho_{k}$ is  very simple. Then either $R_{k}=k$
 or $R_{k}=\End_{k}(T_{k})$.
 In the latter case, applying
 \eqref{splittingVery}, we get
 $$\End_{k}(T_{k})=(R\otimes_O k)\oplus (R_1\otimes_O k)=R_{k}\oplus (R_1\otimes_O k)=\End_{k}(T_{k})\oplus (R_1\otimes_O k).$$
 Now  $k$-dimension arguments imply that $R_1\otimes_O k=\{0\}$ and therefore $R_1=\{0\}$. This implies that
 $\End_O(T)=R\oplus R_1=R$ and therefore $R_E=\End_E(T_E)$.
 
 Assume now that $R_{k}=k$. It follows from \eqref{splittingVery} that
 $$R=O\oplus (C_1\oplus D)$$ and therefore
 $$k=R\otimes_O k=k\oplus  \ (C_1\oplus D)\otimes_O k.$$
 Again, $k$-dimension arguments imply that  $(C_1\oplus D)\otimes_O k=\{0\}$ and therefore
 $C_1\oplus D=\{0\}$. It follows that $R=O$ and therefore $R_E=E$. This proves that $\rho_E$ is semisimple.
 \end{proof}

\section{Abelian varieties and cyclotomic fields}
\label{cyclotomicA}

Let $p$ be  a prime, $r$ a positive integer, and $q=p^r$. Let $E=\Q(\zeta_q)$ be the $q$th  cyclotomic field
and $O_E=\Z[\zeta_q]$ its ring of integers.

Let us put 
$$\eta=\eta_q:=1-\zeta_q\in \Z[\zeta_q].$$
It is well known \cite{W} that the principal ideal $\eta_q \Z[\zeta_q]$ of $\Z[\zeta_q]$ is maximal and contains $p \Z[\zeta_q]$. Actually,
$$p \Z[\zeta_q] =\eta_q^{\phi(q)}  \Z[\zeta_q].$$
It follows that there is $\eta^{\prime} \in  \Z[\zeta_q]$ such that
\begin{equation}
\label{cycloPrime}
\eta^{\prime}  \Z[\zeta_q]=\eta_q^{\phi(q)-1}  \Z[\zeta_q], \ \eta_q \eta^{\prime}=\eta^{\prime}\eta_q=p.
\end{equation}
The residue field 
$\Z[\zeta_q]/\eta \Z[\zeta_q]$ coincides with $\F_p$.
It is also well known \cite{W} that 
$$\Z_p[\zeta_q]=\Z[\zeta_q]\otimes \Z_p$$
 is the ring of integers in the $p$-adic   $q$th cyclotomic field $\Q_p(\zeta_q)$
and $\eta_q \Z_p[\zeta_q]$ is the maximal ideal of $\Z_p[\zeta_q]$ with residue field 
$$\Z_p[\zeta_q]/\eta_q \Z_p[\zeta_q]=\Z[\zeta_q]/\eta_q \Z[\zeta_q]=\F_p.$$

Let $K$ be a field of characteristic different from $p$. Let $K_a$ be the algebraic closure of $K$ and $K_s\subset K_a$ the separable
algebraic closure of $K$. We write $\Gal(K)$ for the automorphism group $\Aut(K_a/K)=\Gal(K_s/K)$ of the corresponding field extension.

Let $Z$ be an  abelian variety of positive dimension $g$ over $K$, and $\End_K(Z)$ (resp. $\End(Z)$) the ring of its $K$-endomorphisms
(resp. the ring of all $K_a$-endomorphisms). By a theorem of Chow, all endomorphisms of $Z$ are defined over $K_s$.
In addition, $Z[p]\subset Z(K_s)$ where $Z[p]$ is the kernel of multiplication by $p$ in $Z(K_a)$. If $m$ is an integer then we write
$m_Z\in \End_K(Z)$ for multiplication by $m$ in $Z$.

Suppose that we are given the ring embedding
$${\bf i}:O_E\hookrightarrow \End_K(Z)\subset \End(Z)$$
such that $1\in O_E$ goes to the identity automorphism $1_Z$ of $Z$.
In light of \eqref{cycloPrime},  ${\bf i}(\eta_q): Z \to Z$  and  ${\bf i}(\eta^{\prime}): Z \to Z$ are isogenies and the kernel
$\ker({\bf i}(\eta_q))$  of ${\bf i}(\eta_q)$ lies in $Z[p]$. In addition,
\begin{equation}
\label{kerEta}
\ker({\bf i}(\eta_q))={\bf i}(\eta^{\prime})(Z[p])\subset Z[p].
\end{equation}
Indeed, since $\eta_q \eta^{\prime}=p$,  we have  ${\bf i}(\eta_q){\bf i}( \eta^{\prime})=p_Z$ and
$${\bf i}(\eta^{\prime})(Z[p])\subset \ker({\bf i}(\eta_q)).$$
Conversely, suppose that $z \in \ker({\bf i}(\eta_q))$. Since ${\bf i}(\eta^{\prime})$ is an isogeny, it is surjective and therefore
there is $\tilde{z} \in Z(K_a)$ such that ${\bf i}(\eta^{\prime})(\tilde{z})=z.$ This implies that
$$0={\bf i}(\eta_q)(z)={\bf i}(\eta_q){\bf i}(\eta^{\prime})\tilde{z}=p_Z \tilde{z}=p \tilde{z}.$$
It follows  that $\tilde{z} \in Z[p]$ and therefore $\ker({\bf i}(\eta_q))\subset {\bf i}(\eta^{\prime})(Z[p])$,
which ends the proof of \eqref{kerEta}.

Let us put
\begin{equation}
\label{deltaQ}
\delt:={\bf i}(\zeta_q)\in \End_K(Z)\subset \End(Z).
\end{equation}

\begin{rem}
\label{invDeltaQ}
\begin{itemize}
\item[(i)]
Since $\eta_q=1-\zeta_q$, we get ${\bf i}(\eta_q))=1_Z-\delt$ and therefore
\begin{equation}
\label{invariantKer}
\ker({\bf i}(\eta_q))=\{z \in Z(K_a)\mid \delt(z)=z\}=:Z^{\delt}.
\end{equation}
\item[(ii)]
Since $\ker({\bf i}(\eta_q))$ is a subgroup of $Z[p]$, it carries the natural structure of a $\F_p$-vector space.
In other words,  $\ker({\bf i}(\eta_q))$  is a $\F_p$-vector subspace of $Z[p]$.
\item[(iii)]
Since the endomorphism  ${\bf i}(\eta_q)$ is defined over $K$, $\ker({\bf i}(\eta_q))$ is a $\Gal(K)$-invariant subspace  of $Z[p]$. The action of $\Gal(K)$
on $\ker({\bf i}(\eta))$  gives rise to the natural linear representation
\begin{equation}
\label{rhoEta}
\rho_{\eta}=\rho_{\eta,Z}: \Gal(K) \to \Aut_{\F_p}(\ker({\bf i}(\eta_q))),
\end{equation}
 $$ \sigma \mapsto \{z \mapsto \sigma(z) \ \forall z \in \ker({\bf i}(\eta_q))\subset Z[p]\subset Z(K_s)\} \ \forall
\sigma \in \Gal(K).$$
\end{itemize}
\end{rem}

\begin{lem}
\label{RibetEta}
$\phi(q)=[E:\Q]$ divides $2\dim(Z)=2g$ and $\ker({\bf i}(\eta_q))$ is a $\F_p$-vector space of dimension 
$$h_E:=\frac{2\dim(Z)}{[E:\Q]}=\frac{2g}{\phi(q)}.$$
\end{lem}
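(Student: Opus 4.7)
The plan is to pass to the $p$-adic Tate module and exploit the fact that $\Z_p[\zeta_q]$ is a discrete valuation ring with uniformizer $\eta_q$. Since $\mathrm{char}(K)\ne p$, the Tate module $T_p(Z)=\varprojlim_n Z[p^n](K_s)$ is a free $\Z_p$-module of rank $2\dim(Z)$. Via the embedding ${\bf i}$, it acquires the structure of a module over $O\otimes_{\Z}\Z_p=\Z_p[\zeta_q]$.

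As recalled just before the lemma, $\Z_p[\zeta_q]$ is the ring of integers in the totally ramified extension $\Q_p(\zeta_q)/\Q_p$ of degree $\phi(q)$, with maximal ideal $\eta_q\Z_p[\zeta_q]$ and residue field $\F_p$, and $p\Z_p[\zeta_q]=\eta_q^{\phi(q)}\Z_p[\zeta_q]$. In particular $\Z_p[\zeta_q]$ is a DVR. Because $T_p(Z)$ is $\Z_p$-torsion-free, it is $\Z_p[\zeta_q]$-torsion-free, and being finitely generated over a DVR it is free. Let $h_E$ denote its rank. Comparing $\Z_p$-ranks yields
$$2\dim(Z)=h_E\cdot [\Q_p(\zeta_q):\Q_p]=h_E\cdot \phi(q),$$
which simultaneously shows that $\phi(q)=[E:\Q]$ divides $2\dim(Z)$ and that $h_E=2\dim(Z)/\phi(q)$, matching the definition of $h_E$ in the statement.

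To compute $\dim_{\F_p}\ker({\bf i}(\eta_q))$, reduce modulo $p$: since $p\Z_p[\zeta_q]=\eta_q^{\phi(q)}\Z_p[\zeta_q]$, we have an isomorphism of $\Z_p[\zeta_q]$-modules
$$Z[p]=T_p(Z)/pT_p(Z)\cong \bigl(\Z_p[\zeta_q]/\eta_q^{\phi(q)}\Z_p[\zeta_q]\bigr)^{h_E}.$$
By \eqref{kerEta}, the group $\ker({\bf i}(\eta_q))$ lies inside $Z[p]$ and coincides with the $\eta_q$-annihilator of $Z[p]$. In each direct summand $\Z_p[\zeta_q]/\eta_q^{\phi(q)}\Z_p[\zeta_q]$, the annihilator of $\eta_q$ is the one-dimensional $\F_p$-line $\eta_q^{\phi(q)-1}\Z_p[\zeta_q]/\eta_q^{\phi(q)}\Z_p[\zeta_q]$. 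Summing over the $h_E$ factors gives $\dim_{\F_p}\ker({\bf i}(\eta_q))=h_E$, as required.

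I do not foresee a serious obstacle: the only substantive input is the structure theorem for finitely generated torsion-free modules over the DVR $\Z_p[\zeta_q]$, which gives the freeness of $T_p(Z)$; the rest is bookkeeping with the uniformizer $\eta_q$ and the identification $p=\eta_q^{\phi(q)}\cdot\text{(unit)}$.
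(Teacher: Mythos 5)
Your proof is correct and follows essentially the same line as the paper's: pass to the $p$-adic Tate module, establish freeness as a $\Z_p[\zeta_q]$-module, reduce modulo $p$, and compute the kernel summand-by-summand. The only minor difference is that you derive freeness directly from the structure theorem over the DVR $\Z_p[\zeta_q]$, whereas the paper cites Ribet's Prop.~2.2.1 for this fact; since $p$ is totally ramified in $\Q(\zeta_q)$ so that $\Z_p[\zeta_q]$ is indeed a DVR, your more self-contained derivation is valid.
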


\begin{proof}
 By a result of Ribet \cite[Prop. 2.2.1 on p. 769]{Ribet2}, the
$\Z_p$-{\sl Tate module} $T_p(Z)$ of $Z$ is a free module over the ring
$$\Z_p[\delt]={\bf i}(O_E)\otimes \Z_p \cong \Z[\zeta_q]\otimes \Z_p =\Z_p[\zeta_q]$$
 of rank $h_E=2g/\phi(q)$. (In particular, $h_E$ is an integer.)  
 This implies that the $\Z_p[\delta]$-module 
$Z[p]=T_p(Z)/p T_p(Z)$  is isomorphic to $(\Z_p[\delt]/p)^{h_E}$.  It follows from \eqref{kerEta} that the $\F_p$-vector space
$$\ker({\bf i}(\eta_q))\cong \left(\eta^{\prime}\Z_p[\zeta_q]/p\right)^{h_E}=$$
$$\left(\eta^{\prime} \Z_p[\zeta_q]/ \eta^{\prime}\eta_q \Z_p[\zeta_q]\right)^{h_E}=
 \left(\Z_p[\zeta_q]/\eta_q\Z_p[\zeta_q]\right)^{h_E}=\F_p^{h_E}.$$
 This proves that $\ker({\bf i}(\eta_q))$ is a $\F_p$-vector space of dimension $h_E$.
\end{proof}

 Let $\Lambda$ be the centralizer of ${\bf i}(O_E)$ in $\End(Z)$.  Clearly, ${\bf i}(O_E)$ lies in the center of $\Lambda$.
 It is also clear that
 $$\Lambda (\ker {\bf i}(\eta_q)) \subset \ker({\bf i}(\eta_q)),$$
 which gives rise to the natural  homomorphism of $O/\eta_q O=\F_p$-algebras
 \begin{equation}
 \label{inMODeta}
 {\bf \kappa}: \Lambda/{\bf i}(\eta_q)\Lambda \to \End_{\F_p}(\ker {\bf i}(\eta_q)),  \ u+{\bf i}(\eta_q)\Lambda \mapsto \{z \mapsto u(z)\} \ \forall z \in \ker {\bf i}(\eta_q).
 \end{equation}
 
 \begin{prop}
 \label{inEta}
 The homomorphism ${\bf \kappa}$ defined in \eqref{inMODeta} is injective.
 \end{prop}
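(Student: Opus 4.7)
The plan is to argue by elementary factorization of morphisms of abelian varieties. Suppose $u \in \Lambda$ belongs to the kernel of $\kappa$, i.e., $u$ vanishes on the finite subgroup $\ker \mathbf{i}(\eta_q) \subset Z[p]$. Since $\mathbf{i}(\eta_q): Z \to Z$ is an isogeny (its kernel lies in $Z[p]$ and the map $\mathbf{i}(\eta^{\prime}) \circ \mathbf{i}(\eta_q) = p_Z$ is an isogeny), the standard factorization property for isogenies of abelian varieties guarantees a (unique) endomorphism $v \in \End(Z)$ with $u = v \circ \mathbf{i}(\eta_q)$. This is the first step.

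The second step is to verify that $v$ in fact lies in $\Lambda$, i.e., commutes with every $\mathbf{i}(e)$ for $e \in O_E$. Since $O_E$ is commutative, $\mathbf{i}(\eta_q)$ commutes with $\mathbf{i}(e)$, and since $u \in \Lambda$ we have $u \mathbf{i}(e) = \mathbf{i}(e) u$; substituting $u = v \mathbf{i}(\eta_q)$ yields
$$v \mathbf{i}(e) \mathbf{i}(\eta_q) = v \mathbf{i}(\eta_q) \mathbf{i}(e) = u\mathbf{i}(e) = \mathbf{i}(e) u = \mathbf{i}(e) v \mathbf{i}(\eta_q).$$
Because $\mathbf{i}(\eta_q)$ is an isogeny, it is an epimorphism in the category of abelian varieties (it is surjective on $K_a$-points), so it can be cancelled on the right, giving $v \mathbf{i}(e) = \mathbf{i}(e) v$ for every $e \in O_E$. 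Thus $v \in \Lambda$.

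The third step is the conclusion: as $v \in \Lambda$, it commutes with $\mathbf{i}(\eta_q)$, so
$$u = v \mathbf{i}(\eta_q) = \mathbf{i}(\eta_q) v \in \mathbf{i}(\eta_q) \Lambda,$$
proving that $u + \mathbf{i}(\eta_q)\Lambda = 0$ in the quotient, as required.

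I do not anticipate a serious obstacle: the only delicate point is the factorization in step one, but this is standard for isogenies of abelian varieties over a field (the quotient $Z/\ker \mathbf{i}(\eta_q)$ exists and is canonically identified with $Z$ via $\mathbf{i}(\eta_q)$, so any endomorphism of $Z$ killing $\ker \mathbf{i}(\eta_q)$ descends through this quotient). Everything else is a short manipulation using the commutativity of $\mathbf{i}(O_E)$ together with the right-cancellability of isogenies.
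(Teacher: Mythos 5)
Your argument is correct, and it is genuinely different in route from the paper's. You invoke the general factorization property of morphisms of abelian varieties through an isogeny: since $u$ kills $\ker\mathbf{i}(\eta_q)$ and $\mathbf{i}(\eta_q)$ is an isogeny from $Z$ to itself, $u$ descends to a unique $v \in \End(Z)$ with $u = v\,\mathbf{i}(\eta_q)$; you then show $v \in \Lambda$ by right-cancellation of the epimorphism $\mathbf{i}(\eta_q)$. The paper instead avoids factoring through the "exotic" isogeny $\mathbf{i}(\eta_q)$ and reduces to the most standard isogeny, multiplication by $p$: it forms $w := \mathbf{i}(\eta')u = u\,\mathbf{i}(\eta')$, checks that $w$ kills all of $Z[p]$ (using the identity $\ker\mathbf{i}(\eta_q) = \mathbf{i}(\eta')(Z[p])$ established just before), hence $w = p\tilde{v}$ for some $\tilde{v} \in \End(Z)$; the commutation $\tilde{v} \in \Lambda$ then drops out by torsion-freeness of $\End(Z)$ rather than by right-cancellation, and a short computation involving $\mathbf{i}(\eta_q)\mathbf{i}(\eta') = p_Z$ finishes. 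What each approach buys: yours is shorter and more conceptual, resting on the universal property of the quotient $Z/\ker\mathbf{i}(\eta_q)$, and the cancellation step is clean since isogenies are epimorphisms; the paper's version uses only the divisibility statement "an endomorphism killing $Z[p]$ is divisible by $p$ in $\End(Z)$," a more elementary special case, at the cost of a slightly longer chain of identities in $\End(Z)$. Both proofs are sound, and your version is a perfectly acceptable alternative.
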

 
\begin{proof}
Suppose that $u \in \Lambda$ and $u(\ker {\bf i}(\eta_q))=\{0\}$. We need to prove that $u \in {\bf i}(\eta_q)\Lambda$.
In order to do that, notice that the endomorphism of $Z$
$$v:={\bf i}(\eta^{\prime})\ u=u \ {\bf i}(\eta^{\prime})\in \Lambda \subset \End(Z)$$
 kills $Z[p]$, because
$$v(Z[p])=u\ {\bf i}\left(\eta^{\prime})(Z[p]\right)= u \ ({\bf i}\left(\eta^{\prime})Z[p]\right)=u\big(\ker {\bf i}(\eta_q)\big)=\{0\}.$$
This implies that there is $\tilde{v} \in \End(Z)$ such that $v=p \tilde{v}$. Since $v$ commutes with ${\bf i}(O_E)$,
$\tilde{v}$ also commutes with ${\bf i}(O_E)$, i.e., $\tilde{v}\in \Lambda$. We have
$${\bf i}(\eta^{\prime}){\bf i}(\eta_q) \tilde{v}=p \tilde{v}=v={\bf i}(\eta^{\prime})u.$$
This implies that in $\End(Z)$
$${\bf i}(\eta^{\prime})\big({\bf i}(\eta_q) \tilde{v}-u\big)=0.$$
Multiplying it by ${\bf i}(\eta_q)$ from the left and taking into account that ${\bf i}(\eta_q){\bf i}(\eta^{\prime})={\bf i}(p)$, we get
$$p\big({\bf i}(\eta_q) \tilde{v}-u\big)=0$$
in $\End(Z)$.  It follows that 
 ${\bf i}(\eta_q) \tilde{v}=u$.
Since $\tilde{v}\in \Lambda$, we are done.
\end{proof}

\begin{rem}
\label{normalZ}
\begin{itemize}
\item[(i)]
Since $Z$ is defined over $K$, one may associate with every $u\in \End(Z)$ and $ \sigma \in \Gal(K)$ an endomorphism $^{\sigma}u \in \End(Z)$ such that 
\begin{equation}
\label{uSigma}
^{\sigma}u(z)=\sigma
u(\sigma^{-1}z) \quad \forall z \in Z(K_a).
\end{equation}
\item[(ii)]
Recall  that ${\bf i}(O_E)\subset \End_K(Z)$ consists of $K$-endomorphisms of $Z$. It follows that if $u \in \End(Z)$ commutes with  ${\bf i}(O_E)$ then 
$^{\sigma}u$ commutes with ${\bf i}(O_E)$ for all $\sigma \in \Gal(K)$. In other words, if $u \in \Lambda$ then $^{\sigma}u\in \Lambda$ for all $\sigma \in \Gal(K)$.
\item[(iii)] Since $O_E=\Z[\zeta_q]$, we have ${\bf i}(O_E)=\Z[\delt]$. 
It follows that $\Lambda$ coincides with the centralizer of $\delt$ in $\End(Z)$.
\end{itemize}
\end{rem}

\begin{prop}
\label{normalLambdaEta}
The image $R:={\bf \kappa}(\Lambda/\eta_q\Lambda)$ is a $\Gal(K)$-normal subalgebra of $\End_{\F_p}(\ker {\bf i}(\eta_q))$.
\end{prop}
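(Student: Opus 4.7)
The plan is to show that conjugation by $\rho_\eta(\sigma)$ on $\End_{\F_p}(\ker \mathbf{i}(\eta_q))$ matches the Galois transport $u \mapsto {}^\sigma u$ on $\Lambda$ modulo ${\bf i}(\eta_q)\Lambda$, and then invoke Remark~\ref{normalZ} to conclude.

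First, I would record two stability facts. By Remark~\ref{normalZ}(ii), for every $u \in \Lambda$ and every $\sigma \in \Gal(K)$ we have ${}^\sigma u \in \Lambda$, so $\Lambda$ is a $\Gal(K)$-stable subring of $\End(Z)$. Since $\mathbf{i}(\eta_q) \in \End_K(Z)$ is fixed by the Galois action (so ${}^\sigma(\mathbf{i}(\eta_q) v) = \mathbf{i}(\eta_q)\,{}^\sigma v$ for $v\in\Lambda$), the two-sided ideal $\mathbf{i}(\eta_q)\Lambda \subset \Lambda$ is also $\Gal(K)$-stable. Consequently the Galois action descends to a ring action on the quotient $\Lambda/\mathbf{i}(\eta_q)\Lambda$. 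Likewise, $\ker \mathbf{i}(\eta_q)$ is $\Gal(K)$-stable (as the kernel of a $K$-endomorphism), which is exactly why the representation $\rho_\eta$ in \eqref{rhoEta} is defined.

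Next I would verify that $\kappa$ is $\Gal(K)$-equivariant. Fix $u \in \Lambda$, $\sigma \in \Gal(K)$ and $z \in \ker \mathbf{i}(\eta_q)$. Using \eqref{uSigma},
\begin{equation*}
\kappa({}^\sigma u)(z) = ({}^\sigma u)(z) = \sigma\bigl(u(\sigma^{-1}z)\bigr) = \rho_\eta(\sigma)\,\kappa(u)\,\rho_\eta(\sigma)^{-1}(z).
\end{equation*}
Thus
\begin{equation*}
\rho_\eta(\sigma)\,\kappa(u)\,\rho_\eta(\sigma)^{-1} = \kappa({}^\sigma u) \in R.
\end{equation*}
Since this holds for every coset representative $u\in \Lambda$, we conclude that $\rho_\eta(\sigma) R \rho_\eta(\sigma)^{-1} \subset R$ for all $\sigma \in \Gal(K)$, i.e.\ $R$ is $\Gal(K)$-normal.

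There is no serious obstacle here; the content is really just bookkeeping about Galois actions on endomorphisms. The only point one must not skip is that $\mathbf{i}(\eta_q)$ itself (and hence the ideal $\mathbf{i}(\eta_q)\Lambda$) is defined over $K$ so that everything descends compatibly to the $\F_p$-vector space $\ker \mathbf{i}(\eta_q)$; everything else follows from the defining formula \eqref{uSigma} for ${}^\sigma u$.
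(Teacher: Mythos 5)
Your proof is correct and takes essentially the same route as the paper's: both reduce $\Gal(K)$-normality of $R$ to the single identity $\kappa({}^{\sigma}u)=\rho_{\eta}(\sigma)\,\kappa(u)\,\rho_{\eta}(\sigma)^{-1}$, obtained directly from the defining formula \eqref{uSigma} together with the fact ${}^{\sigma}u\in\Lambda$ from Remark~\ref{normalZ}(ii). Your extra remarks on the $\Gal(K)$-stability of the ideal $\mathbf{i}(\eta_q)\Lambda$ are correct but not actually needed to establish normality.
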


\begin{proof}
\label{RnormalLambda}
Let $u \in \Lambda$. Then
$$\bar{u}: ={\bf \kappa}(u+\eta_q \Lambda)\in R \subset\End_{\F_p}(\ker {\bf i}(\eta_q)),$$
$$  \bar{u}: z \mapsto u(z) \ \forall z \in \ker {\bf i}(\eta_q).$$
Then $^{\sigma}u\in \Lambda$ for all $\sigma \in \Gal(K)$ and
$$\overline{^{\sigma}u}={\bf \kappa}(^{\sigma}u+\eta_q \Lambda)\in \End_{\F_p}(\ker {\bf i}(\eta_q)),$$
$$\overline{^{\sigma}u}: z \mapsto \sigma u \sigma^{-1} (z)=\rho_{\eta}(\sigma) u \rho_{\eta}(\sigma)^{-1}(z)=\rho_{\eta}(\sigma) \bar{u} \rho_{\eta}(\sigma)^{-1}(z).$$
In other words, for each $\bar{u}\in R$
$$\rho_{\eta}(\sigma) \bar{u} \rho_{\eta}(\sigma)^{-1} \in R \ \forall \sigma \in \Gal(K).$$
This proves that $R$ is $\Gal(K)$-normal.
\end{proof}

\begin{rem}
\label{LambdaQ}
\begin{itemize}
\item[(i)]
Extending ${\bf i}$ by $\Q$-linearity, we get a $\Q$-algebra embedding
$$E=O_E\otimes\Q \to \End(Z)\otimes \Q=:\End^0(Z),  \ u\otimes c \mapsto cu \ \forall u \in O, c \in \Q$$
that we continue to denote by ${\bf i}$. Clearly, ${\bf i}(E)$ coincides with the $\Q$-subalgebra $\Q[\delt]$ of $\End^0(Z)$ generated by $\delta_q$.
Clearly, ${\bf i}: E \to \Q[\delt]$  is a field isomorphism of number fields, and ${\bf i}(O_E)$ is the ring of integers in 
the number  field $\Q[\delta]$.
\item[(ii)]
Let us consider the $\Q$-subalgebra
$$\mathcal{H}=\Lambda \otimes \Q\subset \End(Z)\otimes \Q=\End^0(Z).$$
Then the center of $\mathcal{H}$ contains ${\bf i}(O)\otimes \Q=\Q[\delt]$. In other words, $\mathcal{H}$ is a $\Q[\delt]$-algebra of finite dimension. 
\item[(iii)]
We have
\begin{equation}
\label{HendLambda}
\Lambda=\mathcal{H}\cap \End(Z).
\end{equation}
where the intersection is taken in $\End^0(Z)$.
(Here we identify $\End(Z)$ with $\End(Z)\otimes 1$ in $\End^0(Z)$.)
Indeed, the inclusion $\Lambda\subset\mathcal{H}\cap \End(Z)$ is obvious.
Conversely, suppose that $u \in \mathcal{H}\cap \End(Z)$. Then $u \in \End(Z)$ and
$m u \in \Lambda$ for some positive integer $m$. This means that
$$(mu)\delt=\delt (mu),$$
which means that $m(u \delt-\delt u)=0$ in $\End(Z)$. It follows that $u \delt-\delt u=0$,
i.e., $u \in \Lambda$. It follows that $\mathcal{H}\cap \End(Z)\subset \Lambda$, which ends the proof of \eqref{HendLambda}.

\end{itemize}
\end{rem}

\begin{prop}
\label{normalLambdaDim}
\begin{itemize}
\item[(i)]
If the $\Gal(K)$-module $\ker {\bf i}(\eta_q)$ is central simple then 
$\mathcal{H}$ is a central simple $\Q[\delt]$-algebra.

\item[(ii)]
If the $\Gal(K)$-module $\ker {\bf i}(\eta_q)$ is very simple then either
$$\mathcal{H}=\Lambda\otimes\Q={\bf i}(E)=\Q[\delt], \ \Lambda={\bf i}(O_E)=\Z[\delt]$$ or 
$\mathcal{H}=\Lambda\otimes\Q$ is a central simple $\Q[\delt]$-algebra, whose dimension is  the square of $2\dim(Z)/[E:\Q]=2g/\phi(q)$.
\end{itemize}
\end{prop}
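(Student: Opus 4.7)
My plan is to reduce both parts to Lemma \ref{redMatrix}, which I will apply with $E=\Q[\delta_q]$, $O_E={\bf i}(O_E)=\Z[\delta_q]$, the maximal ideal $\mathfrak{m}=\eta_q O_E$ (whose residue field is $\F_p$ by the standard facts about $\eta_q$ recalled at the beginning of Section \ref{cyclotomicA}), the algebra $\mathcal{H}=\Lambda\otimes\Q$, and the order $\Lambda$ itself. First I would verify the hypotheses of Lemma \ref{redMatrix}: since $\Lambda\subset\End(Z)$ is a subgroup of the finitely generated free $\Z$-module $\End(Z)$, it is automatically finitely generated and torsion-free as an $O_E$-module; it contains $O_E={\bf i}(O_E)$ because the elements of ${\bf i}(O_E)$ clearly commute with themselves; and $\Lambda\otimes\Q=\mathcal{H}$ by definition. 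Thus everything reduces to identifying the $\F_p$-algebra structure of $\Lambda/\eta_q\Lambda$.

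For this, I would combine Proposition \ref{inEta} and Proposition \ref{normalLambdaEta}: the map ${\bf \kappa}$ is an injective homomorphism of $\F_p$-algebras $\Lambda/\eta_q\Lambda\hookrightarrow \End_{\F_p}(\ker {\bf i}(\eta_q))$ whose image $R$ is a $\Gal(K)$-normal subalgebra. So $\Lambda/\eta_q\Lambda\cong R$ as $\F_p$-algebras, and any hypothesis on $\ker{\bf i}(\eta_q)$ as a $\Gal(K)$-module translates directly into an abstract structural constraint on $R$.

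For part (i), central simplicity of $\ker{\bf i}(\eta_q)$ as a $\Gal(K)$-module gives, by the definition of a central simple representation, that $R$ is a central simple $\F_p$-algebra. Since the Brauer group $\mathrm{Br}(\F_p)$ is trivial (as $\F_p$ is finite), we conclude $R\cong \Mat_d(\F_p)$ for some $d\ge 1$, and Lemma \ref{redMatrix} yields that $\mathcal{H}$ is a central simple $\Q[\delta_q]$-algebra (of dimension $d^2$). For part (ii), very simplicity leaves exactly two options for $R$. If $R=\F_p\cdot\Id$, then $\Lambda/\eta_q\Lambda\cong \Mat_1(\F_p)$, and Lemma \ref{redMatrix} gives $\mathcal{H}=\Q[\delta_q]$; it then remains to identify $\Lambda$ with $\Z[\delta_q]$, which I would do by noting that every element of $\Lambda\subset \End(Z)$ is integral over $\Z$ and therefore lies in the ring of integers $O_E=\Z[\delta_q]$ of the number field $\Q[\delta_q]$, so $\Lambda\subset O_E$; combined with $O_E\subset\Lambda$ this forces $\Lambda=\Z[\delta_q]$. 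If instead $R=\End_{\F_p}(\ker{\bf i}(\eta_q))$, then by Lemma \ref{RibetEta} the $\F_p$-dimension of $R$ equals $h_E^2$ with $h_E=2\dim(Z)/[E:\Q]$, so $R\cong \Mat_{h_E}(\F_p)$, and Lemma \ref{redMatrix} yields that $\mathcal{H}$ is a central simple $\Q[\delta_q]$-algebra of dimension $h_E^2$.

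The main obstacle I anticipate is not conceptual but a matter of bookkeeping: matching the $\Gal(K)$-normality statement of Proposition \ref{normalLambdaEta} against the abstract definition of a (very or central) simple representation, and cleanly tracking the identifications $E\leftrightarrow \Q[\delta_q]$, $O_E\leftrightarrow \Z[\delta_q]$, $\mathfrak{m}\leftrightarrow \eta_q\Z[\delta_q]$ so that Lemma \ref{redMatrix} can be invoked without ambiguity. Once these identifications are settled, the argument is essentially a direct translation from the module-theoretic hypothesis on $\ker{\bf i}(\eta_q)$ to the algebra-theoretic conclusion on $\mathcal{H}$.
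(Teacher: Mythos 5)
Your proposal is correct and follows essentially the same route as the paper: identify $\Lambda/\eta_q\Lambda$ with the $\Gal(K)$-normal subalgebra $R$ via Propositions~\ref{inEta} and~\ref{normalLambdaEta}, translate the (central or very) simplicity hypothesis into $R\cong\Mat_d(\F_p)$ (using $\mathrm{Br}(\F_p)=\{0\}$ and, in part (ii), Lemma~\ref{RibetEta} to get $d=h_E$), and then invoke Lemma~\ref{redMatrix}. The only cosmetic difference is in the final step of (ii): you deduce $\Lambda\subset\Z[\delta_q]$ from integrality of endomorphisms of $Z$, whereas the paper appeals to $\Z[\zeta_q]$ being integrally closed together with $\Lambda$ being a finitely generated $\Z$-module — the same fact packaged slightly differently.
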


\begin{proof}
\begin{itemize}
\item[(i)]
The central simplicity implies that the $\Gal(K)$-normal subalgebra 
$$R={\bf \kappa}(\Lambda/\eta\Lambda)\cong \Lambda/\eta\Lambda$$
 is a central simple $\F_p$-algebra and therefore 
is isomorphic to the matrix algebra $\Mat_d(\F_p)$ of a certain size $d$.  Applying Lemma \ref{redMatrix}
to $O={\bf i}(O_E)$, the maximal ideal $\mathfrak{m}={\bf i}(\eta_q O_E)$ and the residue field $k=\F_p$, we conclude
that $\mathcal{H}$ is a central simple $\Q[\delta_q]$-algebra.
\item[(ii)]
The very simplicity implies that either $\Lambda/\eta_q\Lambda=\F_p$ or
$$\Lambda/\eta\Lambda \cong \End_{\F_p}(\ker {\bf i}(\eta_q)) \cong \Mat_{h_E}(\F_p).$$
In the latter case, Lemma \ref{redMatrix} tells us that $\mathcal{H}$ is a central simple $\Q[\delt]$-algebra of dimension $h_E^2$.

In the former case, Lemma \ref{redMatrix} tells us that $\mathcal{H}$ is a central simple $\Q[\delt]$-algebra of dimension $1$,
i.e.,  $\mathcal{H}=\Q[\delt]$. Hence,
$$\Z[\delt] \subset \Lambda \subset \Q[\delt].$$
Since $\Z[\delt]\cong \Z[\zeta_q]$ is integrally closed and $\Lambda$ is a free $\Z$-module of finite rank, $\Z[\delt] = \Lambda$.

\end{itemize}

\end{proof}

\section{Cyclic covers and Jacobians}
\label{Prelim}

 Hereafter we fix an odd  prime $p$.

 Let us  assume
that $K$ is a subfield of $\C$.
We write $K_a$ for
the algebraic closure of $K$ in $\C$ and write $\Gal(K)$ for the absolute
Galois group $\Aut(K_a/K)$. We also fix in $K_a$ a primitive $p$th
root of unity $\zeta=\zeta_p$.

Let $f(x) \in K[x]$ be a separable polynomial of degree $n \ge 4$.
We write $\R_f$ for the $n$-element set of its roots and denote by
$L=L_f=K(\R_f)\subset K_a$ the corresponding splitting field of $f(x)$. As
usual, the Galois group $\Gal(L/K)$ is called the Galois group of
$f$ and denoted by $\Gal(f)$. Clearly, $\Gal(f)$ permutes elements
of $\R_f$ and the natural map of $\Gal(f)$ into the group
$\Perm(\R_f)$ of all permutations of $\R_f$ is an embedding. We
will identify $\Gal(f)$ with its image and consider it as the certain
permutation group of $\R_f$. Clearly, $\Gal(f)$ is transitive if
and only if $f$ is irreducible in $K[x]$. Therefore the
$\Gal(f)$-module $(\F_p^{\R_f})^{00}$ is defined. The canonical
surjection $$\Gal(K) \twoheadrightarrow \Gal(f)$$ provides
$(\F_p^{\R_f})^{00}$ with the canonical structure of the
$\Gal(K)$-module via the composition $$\Gal(K)\twoheadrightarrow
\Gal(f)\subset \Perm(\R_f) \subset \Aut((\F_p^{\R_f})^{00}).$$
Let us put 
\begin{equation}
\label{Vfp}
V_{f,p}:=(\F_p^{\R_f})^{00}.
\end{equation}

 Let $C=C_{f,p}$ be the smooth projective model of the smooth
affine $K$-curve
            $$y^p=f(x).$$

The genus $$g=g(C)=g(C_{f,p})$$ of $C$   is
$(p-1)(n-1)/2$ if $p$ does {\sl not} divide
$p$ and $(p-1)(n-2)/2$ if it does  (\cite{Koo}, pp. 401--402, \cite{Towse}, Prop. 1 on p. 3359,
 \cite{Poonen}, p. 148).

 Assume that $K$ contains $\zeta$. There is a non-trivial biregular
 automorphism of $C$
 $$\delta_p:(x,y) \mapsto (x, \zeta y).$$
Clearly, $\delta_p^p$ is the identity selfmap of $C$.

Let 
$$J^{(f,p)}:=J(C)=J(C_{f,p})$$
 be the Jacobian of $C$. It is a $g$-dimensional abelian variety defined over $K$ and one may view $\delta_p$ as an element of
 $$\Aut(C) \subset\Aut(J(C)) \subset \End(J(C))$$
such that
  $$\delta_p \ne \Id, \quad \delta_p^p=\Id$$
where $\Id$ is the identity endomorphism of $J(C)$.
Here $\End(J(C))$ stands for the ring of all $K_a$-endomorphisms of $J(C)$. As usual, we write $\End^0(J(C))=\End^0(J^{(f,p)})$ for
the corresponding $\Q$-algebra $\End(J(C))\otimes \Q$.

Recall  \eqref{embedP} that there is a ring embedding
$${\bf i}_{p,f}: \Z[\zeta_p] \cong \Z[\delta_p] \subset \End(J^{(f,p)}), \ \zeta_p  \mapsto \delta_p.$$
 Let us put
\begin{equation}
\label{kerEtaJ}
J^{(f,p)}(\eta_p)=:\ker({\bf i}_{p,f}(\eta_p))\subset J^{(f,p)}(K_a)
\end{equation}
where $\eta_p=1-\zeta_p\in \Z[\delta_p] $ (Section \ref{cyclotomicA}).

\begin{rem}
\label{HdeltaLambda}
Let 
$$\Lambda:=\End_{\delta_p}(J^{(f,p)})$$ 
be the centralizer of $\delta_p$ in $\End(J^{(f,p)})$.  Clearly,
$$\mathcal{H}:=\Lambda\otimes\Q \subset \End(J^{(f,p)})\otimes \Q\subset \End^0(J^{(f,p)})$$
is the centralizer of $\Q[\delta_p]$ in $\End^0(J^{(f,p)})$.
\end{rem}

\begin{thm}[Prop. 6.2 in \cite{Poonen}, Prop. 3.2 in \cite{SPoonen}]
\label{kereta}
There is a canonical isomorphism of the $\Gal(K)$-modules
$$ J^{(f,p)}(\eta_p) \cong V_{f,p}.$$
\end{thm}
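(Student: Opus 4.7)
The plan is to construct an explicit $\Gal(K)$-equivariant homomorphism $\phi\colon V_{f,p}\to J^{(f,p)}(\eta_p)$ using divisors on $C_{f,p}$ supported at the ramification locus of $\pi\colon C_{f,p}\to\mathbf{P}^1$, and then verify injectivity; since both sides have the same $\F_p$-dimension by Lemma~\ref{RibetEta} together with the genus formula, this will be enough.

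For each $\alpha\in\R_f$ let $P_\alpha$ denote the unique point of $C_{f,p}$ above $\alpha$; it is totally ramified for $\pi$ and is fixed by $\delta_p$. The fundamental identity, valid uniformly whether or not $p\mid n$, is
$$\mathrm{div}_C(x-\alpha) \;=\; pP_\alpha - \pi^*(\infty).$$
Given $h\in(\F_p^{\R_f})^0$, lift to $\tilde h\colon\R_f\to\Z$ with $p\mid\sum_\alpha\tilde h(\alpha)$, and set
$$D_{\tilde h} \;:=\; \sum_{\alpha\in\R_f}\tilde h(\alpha)\,P_\alpha \;-\; \tfrac{1}{p}\Bigl(\sum_{\alpha\in\R_f}\tilde h(\alpha)\Bigr)\pi^*(\infty),$$
a divisor of degree zero. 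Summing the fundamental identity against $\tilde h$ yields $pD_{\tilde h} = \mathrm{div}_C\!\bigl(\prod_\alpha(x-\alpha)^{\tilde h(\alpha)}\bigr)$, so $[D_{\tilde h}]\in J^{(f,p)}[p]$. Two lifts $\tilde h,\tilde h'$ of the same $h$ differ by $pk$ for some $k\colon \R_f\to\Z$, and the same identity gives $D_{\tilde h'}-D_{\tilde h}=\sum_\alpha k(\alpha)\,\mathrm{div}_C(x-\alpha)$, which is principal; hence $\phi(h):=[D_{\tilde h}]$ is well-defined. Since every $P_\alpha$ is $\delta_p$-fixed, $\phi(h)\in J^{(f,p)}(\eta_p)$, and equivariance under $\Gal(K)$ follows from $\sigma(P_\alpha)=P_{\sigma(\alpha)}$. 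When $p\mid n$, the identity $\mathrm{div}_C(y)=\sum_\alpha P_\alpha-(n/p)\pi^*(\infty)$ shows $\phi(1_{\R_f})=0$, so $\phi$ descends to the quotient $V_{f,p}$.

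For injectivity, suppose $D_{\tilde h}=\mathrm{div}(F)$ with $F\in K_a(C_{f,p})^*$. The $\delta_p$-invariance of $D_{\tilde h}$ forces $\delta_p^*F/F$ to be a constant $c\in\mu_p$; using the Kummer decomposition $F=\sum_{j=0}^{p-1}F_j(x)y^j$ together with $\delta_p^*y=\zeta_p y$, one finds $F=F_k(x)y^k$ for a unique $k\in\{0,\dots,p-1\}$, with $F_k$ having zeros and poles only at $\R_f\cup\{\infty\}$. Writing $F_k=c\prod_\alpha(x-\alpha)^{m_\alpha}$ and equating divisors forces $\tilde h(\alpha)=pm_\alpha+k$ for every $\alpha$; summing and using $p\mid\sum_\alpha\tilde h(\alpha)$ then gives $kn\equiv 0\pmod p$, whence $k=0$ when $p\nmid n$, while for $p\mid n$ the class $h=k\cdot 1_{\R_f}$ is trivial in $V_{f,p}=(\F_p^{\R_f})^0/\F_p\cdot 1_{\R_f}$. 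Either way $h=0$ in $V_{f,p}$, so $\phi$ is injective, and the dimension count promotes it to an isomorphism. The main technical point is the uniform treatment of the two cases $p\mid n$ and $p\nmid n$ at $\infty$; writing divisors in terms of $\pi^*(\infty)$ absorbs this case distinction so that a single argument applies in both.
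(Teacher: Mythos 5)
Your proof is correct. Note, however, that the paper does not actually prove Theorem~\ref{kereta}: it is imported directly from Poonen--Schaefer \cite[Prop.~6.2]{Poonen} and Schaefer \cite[Prop.~3.2]{SPoonen}, so there is no internal argument to compare against; what you have written is a reconstruction of the standard argument underlying those references, and it is carried out soundly. All the steps check out: the identity $\mathrm{div}_C(x-\alpha)=pP_\alpha-\pi^*(\infty)$ makes $\phi$ well-defined, of $p$-torsion image, and uniform in the two cases $p\mid n$ and $p\nmid n$; the $\delta_p$-fixedness of the $P_\alpha$ together with the $\delta_p$-invariance of $\pi^*(\infty)$ (which you use tacitly and might state) puts the image inside $\ker(1-\delta_p)=J^{(f,p)}(\eta_p)$; Galois equivariance follows because $\infty$ is $K$-rational and $\sigma(P_\alpha)=P_{\sigma(\alpha)}$; the relation $\mathrm{div}_C(y)=\sum_\alpha P_\alpha-(n/p)\pi^*(\infty)$ kills $1_{\R_f}$ when $p\mid n$; the Kummer decomposition $F=F_k(x)y^k$ and the resulting congruences $\tilde h(\alpha)=pm_\alpha+k$ give injectivity on $V_{f,p}$ in both cases; and Lemma~\ref{RibetEta} together with the genus formula yields the matching $\F_p$-dimensions $n-1$ (resp.\ $n-2$). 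The one place that deserves a sentence is your claim that $F_k$ has zeros and poles only over $\R_f\cup\{\infty\}$: this is because any other zero or pole of $F_k$ would pull back to a contribution to $\mathrm{div}_C(F)$ that $k\,\mathrm{div}_C(y)$ cannot cancel, contradicting $\mathrm{supp}(D_{\tilde h})\subset\pi^{-1}(\R_f\cup\{\infty\})$. That is a presentational remark, not a gap.
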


\begin{rem}
\label{Galf}
Clearly, the natural homomorphism $\Gal(K) \to
\Aut_{\F_p}(V_{f,p})$ coincides with the composition
 $$\Gal(K)\twoheadrightarrow
\Gal(f)\subset \Perm(\R_f) \subset \Aut\left((\F_p^{\R_f})^{00}\right)= \Aut_{\F_p}(V_{f,p}).$$
\end{rem}

\begin{cor}
\label{normalLambdaCor}
\begin{itemize}
\item[(i)]
If the $\Gal(f)$-module $V_{f,p}$ is central simple then the $\Gal(K)$-module  $J^{(f,p)}(\eta_p)$ is central simple and 
$\mathcal{H}=\Lambda\otimes\Q$ is a central simple $\Q[\delta_p]$-algebra.

\item[(ii)]
If the $\Gal(f)$-module $V_{f,p}$  is very simple then  the  $\Gal(K)$-module  $J^{(f,p)}(\eta_p)$ is very simple and either
$\Lambda={\bf i}(O)$ or 
$\mathcal{H}=\Lambda\otimes\Q$ is a central simple $\Q[\delta_p]$-algebra, whose dimension is  the square of $2\dim(J^{(f,p)})/(p-1)$.
\end{itemize}
\end{cor}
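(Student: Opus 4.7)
The plan is to deduce the corollary directly from Proposition \ref{normalLambdaDim} (applied with $q=p$, $Z = J^{(f,p)}$, $E = \Q(\zeta_p)$, so that $[E:\Q] = p-1$ and $\delta_q = \delta_p$), using Theorem \ref{kereta} to transport the central/very simplicity hypothesis from $V_{f,p}$ over to $J^{(f,p)}(\eta_p) = \ker({\bf i}_{p,f}(\eta_p))$.

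First I would spell out the compatibility of the two Galois actions. By Theorem \ref{kereta}, there is a canonical isomorphism of $\Gal(K)$-modules
\[
J^{(f,p)}(\eta_p) \cong V_{f,p},
\]
and by Remark \ref{Galf} the representation of $\Gal(K)$ on $V_{f,p}$ factors through the canonical surjection $\Gal(K)\twoheadrightarrow\Gal(f)$. By Remark \ref{image}(i), an $\F_p$-subalgebra of $\End_{\F_p}(V_{f,p})$ is stable under conjugation by $\Gal(K)$ if and only if it is stable under conjugation by the image $\Gal(f)$. Consequently the $\Gal(K)$-module $J^{(f,p)}(\eta_p)$ is central simple (respectively very simple) if and only if the $\Gal(f)$-module $V_{f,p}$ is central simple (respectively very simple).

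For part (i), assuming $V_{f,p}$ is central simple as a $\Gal(f)$-module, the previous paragraph gives central simplicity of the $\Gal(K)$-module $J^{(f,p)}(\eta_p)$; then Proposition \ref{normalLambdaDim}(i) immediately yields that $\mathcal{H}=\Lambda\otimes\Q$ is a central simple $\Q[\delta_p]$-algebra. For part (ii), the very simplicity of $V_{f,p}$ upgrades to very simplicity of the $\Gal(K)$-module $J^{(f,p)}(\eta_p)$, and Proposition \ref{normalLambdaDim}(ii) then says that either $\Lambda = {\bf i}(O_E) = \Z[\delta_p]$ (equivalently $\mathcal{H} = \Q[\delta_p]$), or $\mathcal{H}$ is a central simple $\Q[\delta_p]$-algebra of dimension $\bigl(2\dim(J^{(f,p)})/[E:\Q]\bigr)^2 = \bigl(2\dim(J^{(f,p)})/(p-1)\bigr)^2$, which is the stated formula since $[E:\Q] = \phi(p) = p-1$.

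There is essentially no obstacle: the whole content of the corollary is packaged inside Proposition \ref{normalLambdaDim} together with the identification of the $\eta_p$-kernel with $V_{f,p}$ provided by Theorem \ref{kereta}. The only point that one needs to be careful about is the distinction between the $\Gal(K)$- and $\Gal(f)$-normality of subalgebras, but this is handled cleanly by Remark \ref{image}(i), because $\Gal(K)$ acts through its quotient $\Gal(f)$.
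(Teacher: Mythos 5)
Your proposal is correct and mirrors the paper's own proof: both transport the central/very simplicity hypothesis from $V_{f,p}$ to $J^{(f,p)}(\eta_p)$ via Theorem \ref{kereta} and Remark \ref{image}(i), and then apply Proposition \ref{normalLambdaDim} with $Z=J^{(f,p)}$, $q=p$, ${\bf i}={\bf i}_{p,f}$. The extra appeal to Remark \ref{Galf} to make the factorization through $\Gal(f)$ explicit is a harmless elaboration of what the paper leaves implicit.
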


\begin{proof}
It follows from Remark \ref{image}(ii) combined with Theorem \ref{kereta} that if the $\Gal(f)$-module  $V_{f,p}$ is central simple (resp. very simple) then
the $\Gal(K)$-module  $J^{(f,p)}(\eta_p)$  (defined in \eqref{kerEtaJ}) is central simple (resp. very simple). Now the desired result follows readily from
Proposition \ref{normalLambdaDim} applied to $Z=J^{(f,p)}$, $q=p$,  and ${\bf i}={\bf i}_{p,f}$.
\end{proof}

The following assertion was proven in \cite[Th. 3.6]{ZarhinCambridge}.

\begin{thm}
\label{Cambridge36}
Suppose that $n \ge 4$.  Assume that $\Q[\delta_p]$ is a maximal commutative subalgebra of
 $\End^0(J^{(f,p)})$. 
 
 Then  $\End^0(J^{(f,p)})=\Q[\delta_p]\cong \Q(\zeta_p)$ and therefore
 $\End(J^{(f,p)})=\Z[\delta_p]\cong \Z[\zeta_p]$.
\end{thm}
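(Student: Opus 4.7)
The plan is to show that maximality of $E := \Q[\delta_p] \cong \Q(\zeta_p)$ as a commutative subalgebra, combined with the Rosati involution and Skolem--Noether, forces $A := \End^0(J^{(f,p)})$ to be simple with center equal to $E$. First observe that $A$ is semisimple over $\Q$ via the Rosati involution. Maximal commutativity of $E$ gives $C_A(E) = E$: any $x \in A$ centralizing $E$ generates the commutative subalgebra $E[x]$, which by maximality must coincide with $E$. Hence the center $Z(A) \subseteq C_A(E) = E$ is a subalgebra of the field $E$, hence itself a field $K \subseteq E$, which forces the semisimple $A$ to be simple. Then $A$ is a central simple $K$-algebra with $E$ as a maximal subfield, so $\dim_K A = [E:K]^2$ by the double centralizer theorem, and it suffices to prove $K = E$.

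Next I would apply Skolem--Noether: for each $\tau \in G := \Gal(E/K)$ there is a unit $\alpha_\tau \in A^{*}$ with $\alpha_\tau e \alpha_\tau^{-1} = \tau(e)$ for every $e \in E$. Its image $j_Z(\alpha_\tau) \in \End_\C(\Omega^1(J^{(f,p)}))$ is invertible, and the identity $j_Z(e)\,j_Z(\alpha_\tau) = j_Z(\alpha_\tau)\,j_Z(\tau^{-1}(e))$ shows that $j_Z(\alpha_\tau)$ carries the weight space $\Omega^1(J^{(f,p)})_{\sigma_i}$ isomorphically onto $\Omega^1(J^{(f,p)})_{\sigma_i \circ \tau^{-1}}$. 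Identifying $\Gal(E/\Q)$ with $(\Z/p)^{*}$ so that $G$ acts on indices by multiplication, the multiplicity function $i \mapsto n_{\sigma_i}$ is therefore constant on $G$-orbits in $(\Z/p)^{*}$.

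In the main case $p > n$ with $p \nmid n$, Remark~\ref{ssLambda}(ii) gives $n_{\sigma_i} = [ni/p]$, so the level set $L_0 := \{i \in (\Z/p)^{*} : [ni/p] = 0\} = \{1, 2, \ldots, M\}$ with $M := [p/n]$ is $G$-stable. For any $j \in G$, multiplication by $j$ permutes $L_0$, whence
\[
\sum_{i=1}^{M} (ij \bmod p) \;=\; \sum_{i=1}^{M} i \;=\; \frac{M(M+1)}{2}.
\]
Writing $ij = (ij \bmod p) + p \cdot [ij/p]$ and summing the difference gives $(j-1) \, M(M+1)/2 \equiv 0 \pmod p$; since $M$ and $M+1$ are consecutive positive integers strictly less than $p$ and $p$ is odd, $p \nmid M(M+1)/2$, forcing $j \equiv 1 \pmod p$ and hence $G = \{1\}$. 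Thus $K = E$, so $A = E = \Q[\delta_p]$, and $\End(J^{(f,p)}) = \Z[\delta_p] \cong \Z[\zeta_p]$ follows because $\Z[\zeta_p]$ is the maximal order in $\Q(\zeta_p)$. The main obstacle I anticipate is making the Skolem--Noether step rigorous, namely verifying that the units $\alpha_\tau$ act as honest $\C$-linear automorphisms permuting the Hodge weight decomposition of $\Omega^1(J^{(f,p)})$, and also adapting the moment argument to the parallel case $p \mid n$, where the formula for $n_{\sigma_i}$ must be replaced by its analogue and a different level set (the first nonempty $L_k$) must be used to run the same arithmetic identity.
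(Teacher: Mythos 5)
Your proposal has a genuine gap: it proves the theorem only under the extra hypothesis $p>n$, whereas Theorem~\ref{Cambridge36} assumes only $n\ge 4$ and places no such restriction on $p$. The structural part of your argument (Rosati gives semisimplicity, $C_A(E)=E$ forces the center $K=Z(A)$ to sit inside the field $E$, hence $A$ is a central simple $K$-algebra of degree $[E:K]$, Skolem--Noether transports $\Gal(E/K)$ to inner automorphisms of $A$, and the image under $j_Z$ permutes the weight spaces so that $i\mapsto n_{\sigma_i}$ is constant on $\Gal(E/K)$-orbits in $(\Z/p\Z)^{*}$) is correct, and the moment computation on $L_0=\{1,\dots,M\}$, $M=[p/n]$, is sound when $p>n$. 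But you never treat $p<n$ with $p\nmid n$, and you explicitly leave $p\mid n$ open. The gap is closable by the same mechanism: write $n=ap+b$ with $0<b<p$; then $[ni/p]=ai+[bi/p]$, so when $a\ge 1$ (i.e.\ $p<n$) the minimal level set $\{i:[ni/p]=[n/p]\}$ collapses to $\{1\}$, forcing $G=\{1\}$ without any summation; and when $p\mid n$, the multiplicity $n_{\sigma_i}=(n/p)i-1$ (the analogue of \eqref{niSigma}, consistent with $g=(p-1)(n-2)/2$) is strictly increasing in $i$, so again the minimal level set is $\{1\}$. So you should state and prove all three subcases, not just $p>n$; as written, the proof is incomplete precisely where the theorem's generality matters.

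As for comparison with the paper: the paper does not prove Theorem~\ref{Cambridge36} here but cites \cite[Th.~3.6]{ZarhinCambridge}. The route taken there (and in the parallel results \cite[Th.~4.16]{ZarhinM}, \cite[Th.~8.3]{ZarhinGanita}, cf.\ Remark~\ref{factorQ}(viii)) is, to my knowledge, a direct analysis of the centralizer using the divisibility constraints on the weight multiplicities (as in Lemma~\ref{divOmega1} and Remark~\ref{ssLambda}(iv)--(vi)) and the Albert-type structure of $\End^0$, rather than a Skolem--Noether transport of $\Gal(E/K)$ to the Hodge decomposition. Your approach is an interesting and essentially self-contained alternative once the three subcases are covered, and it has the advantage of turning the question cleanly into elementary arithmetic about the function $i\mapsto[ni/p]$ on $(\Z/p\Z)^{*}$; but in its present form it does not establish the stated theorem.
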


\begin{thm}
\label{handysup} Let $p$ be an odd prime and $\zeta \in K$. 
Suppose that the $\Gal(f)$-module $(\F_p^{\R_f})^{00}$
enjoys one of the following properties.

\begin{itemize}
\item[(i)]
The $\Gal(f)$-module $V_{f,p}=(\F_p^{\R_f})^{00}$  is very simple.
\item[(ii)]
The  $\Gal(f)$-module 
$V_{f,p}=(\F_p^{\R_f})^{00}$ is central simple.  In addition, 
either $n=p+1$, or $n-1$ is not divisible by $p$. 
\end{itemize}

  Then $\End^0(J^{(f,p)})=\Q[\delta_p]$ and $\End(J^{(f,p)})=\Z[\delta_p]$.

\end{thm}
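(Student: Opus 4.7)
The plan is to reduce to Theorem~\ref{Cambridge36}: once one knows that $\Q[\delta_p]$ is a maximal commutative subalgebra of $\End^0(J^{(f,p)})$, that theorem yields $\End^0(J^{(f,p)})=\Q[\delta_p]$, and then $\End(J^{(f,p)})=\Z[\delta_p]$ follows because $\End(J^{(f,p)})$ is an order in the cyclotomic field $\Q(\zeta_p)\cong\Q[\delta_p]$ containing its maximal order $\Z[\delta_p]\cong\Z[\zeta_p]$. Since the field $\Q[\delta_p]$ is maximal commutative in $\End^0(J^{(f,p)})$ if and only if it coincides with its own centralizer $\mathcal{H}=\Lambda\otimes\Q$ (see Remark~\ref{HdeltaLambda}), the whole task collapses to verifying the single identity $\mathcal{H}=\Q[\delta_p]$.

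For case (i), I would invoke Corollary~\ref{normalLambdaCor}(ii): very simplicity of $V_{f,p}$ leaves only two possibilities, namely either $\Lambda=\Z[\delta_p]$ (in which case $\mathcal{H}=\Q[\delta_p]$ and we are done), or $\mathcal{H}$ is a central simple $\Q[\delta_p]$-algebra of dimension precisely $(2g/(p-1))^2$, where $g=\dim J^{(f,p)}$. To rule out the second alternative, apply Remark~\ref{ssLambda}(vi): the number of weights $\sigma_i$ with $n_{\sigma_i}>0$ on $\Omega^1(J^{(f,p)})$ is at least $(p+1)/2$, so any central simple $\Q[\delta_p]$-subalgebra of $\End^0(J^{(f,p)})$ of dimension $d^2$ satisfies $d\le 2g/(p+1)<2g/(p-1)$. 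This strict inequality excludes the value $d=2g/(p-1)$, leaving only $\Lambda=\Z[\delta_p]$.

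For case (ii), Corollary~\ref{normalLambdaCor}(i) already tells us that $\mathcal{H}$ is a central simple $\Q[\delta_p]$-algebra of some dimension $d^2$. By Lemma~\ref{divOmega1}(i), $d$ divides every multiplicity $n_{\sigma_i}$. The supplementary assumption that either $n=p+1$ or $p$ does not divide $n-1$ is exactly the hypothesis of Remark~\ref{ssLambda}(iv), which yields that the greatest common divisor of the $n_{\sigma_i}$ equals $1$; hence $d=1$ and $\mathcal{H}=\Q[\delta_p]$, as required.

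The main obstacle is the dimensional tightness in case (i): Corollary~\ref{normalLambdaCor}(ii) pins the potential ``bad'' centralizer down to dimension exactly $(2g/(p-1))^2$, while Remark~\ref{ssLambda}(vi) supplies only the strict bound $d<2g/(p-1)$ coming from $p+1>p-1$, so one is relying on precisely the slack available in this inequality to exclude a non-scalar centralizer. Beyond that point, the argument is a routine assembly of already established structural results, and the passage from $\End^0(J^{(f,p)})=\Q[\delta_p]$ to $\End(J^{(f,p)})=\Z[\delta_p]$ is immediate from the fact that $\Z[\zeta_p]$ is the ring of integers of $\Q(\zeta_p)$.
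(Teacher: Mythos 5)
Your proof is correct and follows essentially the same route as the paper's: reduce to Theorem~\ref{Cambridge36} via showing the centralizer $\mathcal{H}=\Lambda\otimes\Q$ equals $\Q[\delta_p]$, then in case (i) use Corollary~\ref{normalLambdaCor}(ii) together with Remark~\ref{ssLambda}(vi) to exclude the dimension $(2g/(p-1))^2$ alternative, and in case (ii) combine Corollary~\ref{normalLambdaCor}(i) with Remark~\ref{ssLambda}(iv) to force $d=1$. The only quibble is a loose phrase — Remark~\ref{ssLambda}(vi)'s bound $d\le 2g/(p+1)$ is stated for the centralizer $\Lambda_{\Q}$, not for an arbitrary central simple $\Q[\delta_p]$-subalgebra — but you apply it only to $\mathcal{H}$, so the argument stands.
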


\begin{proof}[Proof of Theorem \ref{handysup}]
In light of Theorem \ref{Cambridge36},
it suffices to check that $\Q[\delta_p]$ coincides with its own centralizer in
$\End^0(J^{(f,p)})$.
Recall that $J^{(f,p)}$ is a $g$-dimensional
abelian variety defined over $K$.

The properties of the $\Gal(f)$-module $(\F_p^{\R_f})^{00}$ and the integers $n,p$  imply (thanks to Remark \ref{image}(ii)) that either
the $\Gal(K)$-module $J^{(f,p)}(\eta_p)$ is  very simple, or  the following conditions hold.
\begin{itemize}
\item[(a)]
The $\Gal(K)$-module $J^{(f,p)}(\eta_p)$ is central simple.
\item[(b)]
Either $n=p+1$, or $n-1$ is not divisible by $p$.
\end{itemize}
In all the cases the normal $\F_p$-subalgebra $R\cong \Lambda/\eta_p\Lambda$ is isomorphic
to the matrix algebra $\Mat_d(\F_p)$ for some positive integer $d$.

Applying Corollary \ref{normalLambdaDim},
we conclude that $\mathcal{H}=\Lambda_{\Q}=\Lambda\otimes\Q$ is a central simple $\Q[\delta_p]$-algebra of dimension $d^2$ for some positive integer $d$.
In addition, if  the $\Gal(K)$-module $J^{(f,p)}(\eta_p)$ is  very simple, then either 
$$d=1, \ \mathcal{H}=\Q[\delta_p], \Lambda=\Z[\delta_p]$$
or 
$$d=2g/(p-1).$$ 
According  to   Remark \ref{ssLambda}(vi), $ d \ne 2g/(p-1)$. So, in the very simple case
$\mathcal{H}=\Q[\delta_p], \Lambda=\Z[\delta_p]$.

Now suppose that $J^{(f,p)}(\eta_p)$ is  {\sl not} very simple. Then either $n=p+1$ or $n-1$ is not divisible by $p$. It follows from  Remark \ref{ssLambda}(iv) that $d=1$. This implies that 
$H=\Q[\delta_p]$.  Therefore 
$$\Z[\delta_p]\subset \Lambda \subset \Q[\delta_p].$$
This implies that $\Lambda=\Z[\delta_p]$ and therefore the centralizer of $\Q[\delta_p]$
 in $\End^0(J^{(f,p)})$ coincides with $\Lambda\otimes\Q=\Q[\delta_p]$.

\end{proof}

\begin{thm}
\label{SimpleDT}
Let $n \ge 5$ be an integer, $p$ an odd prime, and $K$ contains a primitive $p$th root of unity. 
 Let us put $N:=n-1$ if $p$ does not divide $n$ and $N:=n-2$ if $p\mid n$.
Suppose that the Galois group $\Gal(f)$ of $f(x)$ contains a subgroup $H$  such that the representation of $H$ in $(\F_p^{\R_f})^{00}$ is absolutely irreducible. 
Assume additionally that 
\begin{itemize}
\item[(i)]
the index of every maximal subgroup of  $H$  does not divide $N$.
\item[(ii)]
Either $n=p+1$,  or $n-1$ is not divisible by $p$.
\end{itemize}

Then $\End^0(J^{(f,p)})=\Q[\delta_p]$ and $\End(J^{(f,p)})=\Z[\delta_p]$.
\end{thm}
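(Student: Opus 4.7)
The plan is to deduce the theorem by combining Proposition \ref{centralsimpleIndex} with Theorem \ref{handysup}(ii). First I would observe that everything reduces to verifying that the $\Gal(f)$-module $V_{f,p}=(\F_p^{\R_f})^{00}$ is \emph{central simple}: once this is in hand, condition (ii) of the theorem is precisely the additional numerical hypothesis required by Theorem \ref{handysup}(ii), which then yields both $\End^0(J^{(f,p)})=\Q[\delta_p]$ and $\End(J^{(f,p)})=\Z[\delta_p]$.

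Next I would prove central simplicity of the $H$-module $V_{f,p}$ by direct appeal to Proposition \ref{centralsimpleIndex}, applied with $F=\F_p$. The hypotheses are satisfied: the Brauer group of the finite field $\F_p$ is trivial; by construction $\dim_{\F_p}(V_{f,p})$ equals $N=n-1$ (if $p\nmid n$) or $N=n-2$ (if $p\mid n$); the $H$-representation on $V_{f,p}$ is absolutely irreducible by hypothesis; and condition (i) of the theorem states that no maximal subgroup of $H$ has index dividing $N$. Proposition \ref{centralsimpleIndex} therefore concludes that every $H$-normal subalgebra of $\End_{\F_p}(V_{f,p})$ is a central simple $\F_p$-algebra.

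Then I would upgrade central simplicity from $H$ to the full Galois group $\Gal(f)$. This is essentially formal: if $R\subset \End_{\F_p}(V_{f,p})$ is $\Gal(f)$-normal, then $\rho(g)R\rho(g)^{-1}=R$ for all $g\in \Gal(f)$, hence \emph{a fortiori} for all $h\in H\subset \Gal(f)$, so $R$ is $H$-normal. By the previous step, $R$ is then a central simple $\F_p$-algebra. Thus the $\Gal(f)$-module $V_{f,p}$ is central simple.

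Finally I would invoke Theorem \ref{handysup}(ii): the $\Gal(f)$-module $V_{f,p}$ has just been shown to be central simple, and condition (ii) of our theorem provides exactly the additional hypothesis ``either $n=p+1$, or $n-1$ is not divisible by $p$'' required there. The conclusion $\End^0(J^{(f,p)})=\Q[\delta_p]$ and $\End(J^{(f,p)})=\Z[\delta_p]$ follows immediately. I do not expect any serious obstacle; the only point to be a little careful about is the inheritance of central simplicity from $H$ to $\Gal(f)$, which is the genuine new input beyond the cited machinery, but this reduces to the tautological observation that $G$-normality of a subalgebra implies $H$-normality for any subgroup $H\subset G$.
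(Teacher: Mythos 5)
Your proposal is correct and follows essentially the same two-step route as the paper: verify central simplicity of $V_{f,p}$ via Proposition~\ref{centralsimpleIndex}, then invoke Theorem~\ref{handysup}(ii). The only small difference is how you pass from $H$ to $\Gal(f)$: the paper enlarges $K$ (so that $H=\Gal(f)$, which is harmless because $\End(J^{(f,p)})$ over $K_a$ is unchanged), while you instead argue directly that a $\Gal(f)$-normal subalgebra is \emph{a fortiori} $H$-normal (the same observation as Remark~\ref{image}(iii), transported to central simplicity); both are valid and equivalent in effect.
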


\begin{proof}
[Proof of Theorem \ref{SimpleDT}]
Enlarging $K$ if necessary, we may and will assume that $H=\Gal(f)$. It follows from Proposition \ref{centralsimpleIndex} that
the  absolutely simple $H$-module $(\F_p^{\R_f})^{00}$ is central simple.
Applying Theorem \ref{handysup}, we conclude that
$\End^0(J^{(f,p)})=\Q[\delta_p]$ and $\End(J^{(f,p)})=\Z[\delta_p]$.
\end{proof}

\begin{rem}
See \cite[Sect. 7.7]{DM} and \cite{Mortimer} for the list of doubly transitive permutation groups $H\subset \Perm(B)$  and primes $p$ such that the $H$-module
$(\F_p^{B})^{00}$ is (absolutely) simple. (See also \cite[Sect. 4]{PraegerS}, \cite[Main Theorem]{Curtis}
and \cite{KT}.)
\end{rem}

\section{Jacobians of cyclic covers of prime degree $p$}
\label{mainPproof}

\begin{proof}[Proof of Theorem \ref{primeToP}]
Enlarging $K$ if necessary, we may and will assume that 
$$H=\Gal(f)\subset \Perm(\R_f).$$  
 Since $p>n$,
the prime $p$ divides neither $n$ nor $n-1$.
 In particular,
$$(\F_p^{\R_f})^{00}=(\F_p^{\R_f})^{0}.$$ 
In light of Remark \ref{DoubleprimeToP} applied to $B=\R_f$ and $G=H$, the double transitivity of $H$ implies that the $H$-module $(\F_p^{\R_f})^{0}$ is absolutely simple.
Now the desired result follows readily from Theorem \ref{SimpleDT}.
\end{proof}

\begin{proof}[Proof of Theorem \ref{endo}]
Assume that $n \ge 5$ and $\Gal(f)=\Perm(\R_f)$ or $\Alt(\R_f)$.
Enlarging $K$ if necessary, we may assume that $\Gal(f)=\Alt(\R_f)$.
Taking into account that $\Alt(\R_f)$ is non-abelian simple while the field extension $K(\zeta)/K$ is
abelian, we conclude that the Galois group of $f$ over $K(\zeta)$
is also  $\Alt(\R_f)$. (In particular, $f(x)$
remains irreducible over $K(\zeta)$.) So, in the course of the
proof of Theorem \ref{endo}, we may assume that $\zeta
\in K$ and $\Gal(f)=\Alt(\R_f)$.

 It is well known that
the index of every maximal subgroup of $\mathrm{Alt}(\R_f)\cong \mathbf{A}_n$ is at least $n$; notice that
$$n>N=\dim_{\F_p}(V_{f,p})=\dim_{\F_p}\left((\F_p^{B})^{00}\right).$$ (Recall that $N=n-1$ or $n-2$.)
By  Theorem \ref{A5}(iv), the $\Gal(f)$-module $V_{f,p}=(\F_p^{B})^{00}$ is {\sl central simple}.
It is {\sl very simple} if either $n>5$ or $p \le 5$, thanks to  Theorem \ref{A5}(ii).
On the other hand, if $n=5$ and $p>5$ then $n-1$ is {\sl not} divisible by $p$.
Now the desired result follows  readily from Theorem \ref{handysup}.

 \end{proof}

\begin{proof}[Proof of Theorem \ref{mathieu}]
Since $\mathbf{M}_n$, $\mathrm{HS}$ and $\mathrm{Co}_3$  are simple nonabelian groups, replacing $K$ by $K(\zeta)$, we may and will assume that $\zeta \in K$.
Now the desired result follows readily from   Theorem \ref{SimpleDT} combined with Theorem \ref{mathieuCentral} and Proposition \ref{HSCo3}.

\end{proof}

\begin{proof}[Proof of Theorem \ref{PSL2}]

Enlarging $K$, we may assume that $\Gal(f) =H\cong \mathfrak{G}(\mathfrak{q})$.
Since $\mathfrak{G}(\mathfrak{q})$ is a simple nonabelian group, replacing $K$ by $K(\zeta)$, we may and will assume that $\zeta \in K$.
 In light of \cite[Table 1]{Mortimer}, our conditions on $H$ and $p$   imply that the $H$-module $V_{f,p}$ is {\sl absolutely simple}. 

{\bf Case L2}. It follows from Theorem \ref{PSL2Central} that the $\Gal(f)$-module $(\F_p^{\R_f})^{00}=V_{f,p}$ is central simple.

On the other hand, if $n-1$ is divisible by $p$ then $p=\ell$, because $n-1=(\mathfrak{q}+1)-1=\mathfrak{q}$ which  is a power of the prime number $\ell$.
Hence, our assumptions imply that $n=\mathfrak{q}+1=p+1$.  So, either  $n-1$ is {\sl not} divisible by $p$ or $n=p+1$.
Now we may apply Theorem \ref{handysup}, which gives us
$\End^0(J^{(f,p)})=\Q[\delta_p]$ and $\End(J^{(f,p)})=\Z[\delta_p]$.

{\bf Case Lmq}. It follows from a result of Guralnick and Tiep \cite[Th. 1.1]{GurTiep} that every nontrivial projective representation
of $\Gal(f)=H=\mathrm{L}_m(\mathfrak{q})$ in characteristic $p$ has dimension $\ge \dim_{\F_p}(V_{f,p})$. 
In light of  \cite[Cor. 5.4]{ZarhinMMJ}, the $\Gal(f)$-module $V_{f,p}$ is very simple.
Now the desired result follows readily from  Theorem \ref{SimpleDT}.

{\bf Case U3}.
  It follows readily from the Mitchell's list of maximal subgroups of $\mathrm{U}_3(\mathfrak{q})$ (\cite[p. 212-213]{Hofer}, \cite[Th. 6.5.3 and its proof, pp. 329-332]{Gorenstein}  that the index of every maximal subgroup of  $\mathrm{U}_3(\mathfrak{q})$ is greater or equal than
  $$\mathfrak{q}^3+1 =n>N$$
  where $N=\dim_{\F_p}(V_{f,p})$ is either $n-1=\mathfrak{q}^3$ or $n-2=\mathfrak{q}^3-1$.
  On the other hand,  $n-1=\mathfrak{q}^3$ is a power of the prime $\ell$ and therefore is {\sl not} divisible by the prime $p$, since $\ell \ne p$.
  Now the desired result follows readily from  Theorem \ref{SimpleDT}.
  
  {\bf Case Sz}. It follows from the classification of subgroups of $\mathrm{Sz}(\mathfrak{q})$ \cite[Remark 3.12(e) on p. 194]{HB}
  that every maximal subgroup of $\mathrm{Sz}(\mathfrak{q})$ has index $ \ge \mathfrak{q}^2+1=n$.
  Since $n-1=\mathfrak{q}^2$ is a power of $2$, the odd prime $p$  does {\sl not} divide $n-1$.
  Now the desired result follows readily from  Theorem \ref{SimpleDT}.

  {\bf Case Ree}. Our conditions on $p$ imply that $p \ne 3$. 
   Since $n-1=\mathfrak{q}^2$ is a power of $3$, the  prime $p$  does {\sl not} divide $n-1$.
  It follows from the classification of subgroups of $\mathrm{Ree}(\mathfrak{q})$ \cite[Th. C]{Kleidman}
  that every maximal subgroup of $\mathrm{Sz}(\mathfrak{q})$ has index $ \ge \mathfrak{q}^3+1=n$.
  (See also \cite[Remark 5.4]{Tigran}.) Now the desired result follows readily from  Theorem \ref{SimpleDT}.
   \end{proof}

\section{Jacobians of cyclic covers of degree $q$}
\label{mainQproof}

In this section we discuss the case when $q=p^r>2$ where $r$ is any positive integer, $K$ is a subfield of $\C$ and $f(x) \in K[x]$ a degree $n$ polynomial without repeated roots.
We assume that $n \ge 5$ and either $q\mid n$ or $p$ does {\sl not} divide $n$. Let  $J(C_{f,q})$ be the Jacobian of the curve $C_{f,q}$ and $\delta_q$  the automorphism
of $J(C_{f,q})$,
whicch are defined
in the beginning of Section \ref{Introd}.

\begin{rem}
\label{factorQ}
One may define a positive-dimensional abelian subvariety 
$$J^{(f,q)}:=\mathcal{P}_{q/p}(\delta_q)(J(C_{f,q}))$$
 of $J(C_{f,q})$ \cite[p. 355]{ZarhinM}
that  is defined over $K(\zeta_q)$ and enjoys the following properties \cite{ZarhinM} (see also \cite{ZarhinGanita}).
\begin{itemize}
\item[(i)]
If $q=p$ then  $J^{(f,p)}=J(C_{f,p})$ (as above).
\item[(ii)]
 $J^{(f,q)}$ is defined over $K(\zeta_q)$.
\item[(iii)]
$J^{(f,q)}$ is  a $\delta_q$-invariant abelian subvariety of $J(C_{f,q})$. In addition $\Phi_q(\delta_q)(J^{(f,q)})=0$ where
$$\Phi_q(t)=\sum_{i=0}^{p-1} t^{i p^{r-1}} \in \Z[t]$$
is the $q$th cyclotomic polynomial. This gives rise to the ring embedding
$${\bf j}_{q,f}: \Z[\zeta_q] \hookrightarrow \End(J^{(f,q)})$$
under which $\zeta_q$ goes to to the restriction of $\delta_q$ to $J^{(f,q)}$, which we  denote by $\delt_q \in  \End(J^{(f,q)})$.
Then the subring $\Z[\delt_q]$ of $\End(J^{(f,q)})$ is isomorphic to $\Z[\zeta_q]$ (via ${\bf j}_{q,f}$), and the $\Q$-subalgebra
$\Q[\delt_q]$ of $\End^0(J^{(f,q)})$ is isomorphic to $\Q(\zeta_q)$.
\item[(iv)]
If $p$ does {\sl not} divide $n$ then there is an isogeny of abelian varieties
$$J(C_{f,q}) \to J(C_{f,q/p})\times J^{(f,q)}$$ that is defined over $K(\zeta_q)$. (Notice that $q/p=p^{r-1}$, so
$ J(C_{f,q/p})= J(C_{f,p^{r-1}})$.)
  By induction, this gives us an isogeny of abelian varieties
$$J(C_{f,q}) \to J(C_{f,p}) \times \prod_{i=2}^r J^{(f,r^i)}=\prod_{i=1}^r J^{(f,r^i)}$$
that is also defined over $K(\zeta_q)$ \cite[Cor. 4.12]{ZarhinM}.
\item[(v)]
Suppose that $\zeta_q \in K$. Then the $\Gal(K)$-submodule $\ker(\one-\delt_q)$ of $J^{(f,q)}(K_a)$ is isomorphic to $V_{f,p}$. 
(See  \cite[Lemma 4.11]{ZarhinM}, \cite[Th. 9.1]{ZarhinGanita}.) 
In particular, $\ker(\one-\delt_q)$  is a $N$-dimensional vector space over $\F_p$ where 
\begin{itemize}
\item
$N=n-1$ if $p$ does {\sl not} divide $n$;
\item
$N=n-2$ if $p$ divides $n$ and $q$  divides $n$.
\end{itemize}
(Here $\one$ stands for the identity automorphism of $J^{(f,q)}$.)
\item[(vii)]
Let us consider the action of the  subfield $\Q[\delt_q]$ of  $\End^0(J^{(f,q)})$  on $\Omega^1(J^{(f,q)})$. 
Let $i<q$ be a positive integer that is {\sl not} divisible by $p$  and
$\sigma_i:  \Q[\delt_q] \hookrightarrow \C$ be the field embedding that sends $\delt_q$ to $\zeta_q^{-i}$. Clearly,
$$\Omega^1(J^{(f,q)})=\oplus_i \Omega^1(J^{(f,q)})_{\sigma_i}$$
where $\Omega^1(J^{(f,q)})_{\sigma_i}$ are the corresponding weight subspaces (see Section \ref{weightS}).
Let us consider the nonnegative integers
$$n_{\sigma_i}:=\dim_{\C} (\Omega^1(J^{(f,q)})_{\sigma_i}).$$
\begin{enumerate}
\item[(1)]
If $p$ does {\sl not} divide $n$ then 
$$n_{\sigma_i}=\left[\frac{ni}{q}\right]$$
\cite[Remark 4.13]{ZarhinM}. In addition, the number of $i$ with $n_{\sigma_i}\ne 0$ is strictly greater than
$$\frac{(p-1)p^{r-1}}{2}=\frac{\phi(q)}{2}=\frac{[\Q[\delt_q]:\Q]}{2}$$ 
\cite[p. 357-358]{ZarhinM}).
\item[(2)]
If $p$ is odd and $q$ divides $n$
then the  GCD of all $n_{\sigma_i}$'s is $1$
\cite[Lemma 8.1(D]{ZarhinGanita}.
\item[(3)]
If $p$ is an odd prime that does {\sl not} divide $n$, and either $n=q+1$ or $n-1$ is {\sl not} divisible by $q$, then n the  GCD of all $n_{\sigma_i}$'s is $1$
\cite[Lemma 8.1(D]{ZarhinGanita}.
\end{enumerate}
\item[(viii)] If  $p$ is odd and  $\Q[\delt_q]$ is a maximal commutative subalgebra of $\End^0(J^{(f,q)})$ then
 $$\End^0(J^{(f,q)})=\Q[\delt_q], \ \End(J^{(f,q)})=\Z[\delt_q]$$
(\cite[Th. 4.16]{ZarhinM}, \cite[Th. 8.3]{ZarhinGanita}).
\end{itemize}
\end{rem}

\begin{thm}
\label{SimpleDTnew}
Let $n \ge 5$ be an integer, $p$ an odd prime, and $K$ contains a primitive $q$th root of unity.  Suppose that 
either $p$ does not divide $n$ or $q$ divides $n$.

 Let us put $N:=n-1$ if $p$ does not divide $n$ and $N:=n-2$ if $q\mid n$.
Suppose that the Galois group $\Gal(f)$ of $f(x)$ contains a subgroup $H$  such that the representation of $H$ in $\left(\F_p^{\R_f}\right)^{00}=V_{f,p}$ is absolutely irreducible. 
Assume additionally that the index of every maximal subgroup of  $H$  does not divide $N$  and one of the following conditions holds.
\begin{itemize}
\item[(i)]
 The representation of $H$ in $\left(\F_p^{\R_f}\right)^{00}=V_{f,p}$ is very simple. 
\item[(ii)]
Either $p$ does not divide $n$ and  $n-1$ is not divisible by $q$, or $n=q+1$, or $q\mid n$.
\end{itemize}

Then $\End^0(J^{(f,q)})=\Q[\delt_q]$ and $\End(J^{(f,q)})=\Z[\delt_q]$.
In particular, $J^{(f,q)}$ is an absolutely simple abelian variety.
\end{thm}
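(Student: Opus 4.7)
The plan is to transpose the proof of Theorem \ref{handysup} from the level $q=p$ to general $q=p^r$, using the framework of Section \ref{cyclotomicA} applied to $Z=J^{(f,q)}$ together with the embedding ${\bf j}_{q,f}:\Z[\zeta_q]\hookrightarrow \End(J^{(f,q)})$. First I would enlarge $K$ so that $\Gal(f)=H$; this neither changes $\End(J^{(f,q)})$ (which is computed over $K_a$) nor spoils the assumption $\zeta_q\in K$. By Remark \ref{factorQ}(v) the $\Gal(K)$-module $\ker(1-\delt_q)$ is isomorphic to $V_{f,p}$, so the hypotheses on $H$ translate directly into the corresponding hypotheses on $\Gal(K)$. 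Combining the absolute irreducibility with the condition on indices of maximal subgroups, Proposition \ref{centralsimpleIndex} shows that the $\Gal(K)$-module $\ker(1-\delt_q)$ is \emph{central simple}, and in case (i) even \emph{very simple}.

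Let $\Lambda$ denote the centralizer of $\delt_q$ in $\End(J^{(f,q)})$ and set $\mathcal{H}=\Lambda\otimes\Q$. Applying Proposition \ref{normalLambdaDim} with $Z=J^{(f,q)}$, $E=\Q(\zeta_q)$, and ${\bf i}={\bf j}_{q,f}$ yields: in case (ii), $\mathcal{H}$ is a central simple $\Q[\delt_q]$-algebra of some dimension $d^2$; in case (i), either $\mathcal{H}=\Q[\delt_q]$ (i.e.\ $d=1$) or $\mathcal{H}$ is a central simple $\Q[\delt_q]$-algebra of dimension $(2\dim(J^{(f,q)})/\phi(q))^2$. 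The remainder of the argument will rule out every possibility other than $\mathcal{H}=\Q[\delt_q]$.

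For this I would use Lemma \ref{divOmega1} applied to $Z=J^{(f,q)}$ with the cyclotomic subfield $\Q[\delt_q]$: the integer $d$ must divide every weight multiplicity $n_{\sigma_i}$, and $d$ times the number $M$ of nonzero $n_{\sigma_i}$'s is bounded by $\dim J^{(f,q)}$. In case (ii) the three listed alternatives are precisely those for which Remark \ref{factorQ}(vii)(2,3) guarantees $\gcd_i n_{\sigma_i}=1$, so $d=1$ immediately. In case (i) with $q\mid n$, the same GCD argument via Remark \ref{factorQ}(vii)(2) forces $d=1$. The one remaining configuration is case (i) with $p\nmid n$, where I need to exclude $d=2\dim(J^{(f,q)})/\phi(q)$: Lemma \ref{divOmega1}(ii) would then imply $M\le\phi(q)/2$, contradicting the strict inequality $M>\phi(q)/2$ from Remark \ref{factorQ}(vii)(1). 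Hence in every case $\mathcal{H}=\Q[\delt_q]$, meaning $\Q[\delt_q]$ is a maximal commutative subalgebra of $\End^0(J^{(f,q)})$. Remark \ref{factorQ}(viii) then yields $\End^0(J^{(f,q)})=\Q[\delt_q]\cong\Q(\zeta_q)$ and $\End(J^{(f,q)})=\Z[\delt_q]$; since the endomorphism algebra is a field, $J^{(f,q)}$ is absolutely simple.

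The main obstacle is the very simple case with $p\nmid n$: there $\gcd_i n_{\sigma_i}$ need not equal $1$, so the divisibility clause of Lemma \ref{divOmega1}(i) alone does not pin down $d$, and one is forced to use the Hodge-theoretic counting bound $M>\phi(q)/2$ in combination with the sharper two-valued dichotomy available in Proposition \ref{normalLambdaDim}(ii) under very simplicity. Once this delicate case is handled, the remainder is a routine transcription of the $q=p$ argument to cyclotomic level $q=p^r$.
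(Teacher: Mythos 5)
Your proposal is correct and follows essentially the same route as the paper: transfer the central/very simplicity of $V_{f,p}$ to $\ker(1-\delt_q)$ via Remark \ref{factorQ}(v), apply Proposition \ref{normalLambdaDim} to the centralizer $\Lambda$, bound $d$ via Lemma \ref{divOmega1} using the weight multiplicities and the counting bound of Remark \ref{factorQ}(vii), and conclude via Remark \ref{factorQ}(viii). The only difference is that you organize the elimination of $d>1$ by cases (ii)/(i,\ $q\mid n$)/(i,\ $p\nmid n$) rather than the paper's "very simple vs.\ not very simple" split, which in fact makes the role of the GCD argument versus the Hodge-counting argument more transparent than the paper's somewhat compressed references.
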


\begin{proof}
Enlarging $K$ if necessary, we may and will assume that $H=\Gal(f)$. It follows from Proposition \ref{centralsimpleIndex} that
the  absolutely simple $H$-module $(\F_p^{\R_f})^{00}=V_{f,p}$ is central simple. 

Recall (Remark \ref{factorQ}(v))  that  the $\Gal(K)$-module $\ker(1-\delt_q)$  is isomorphic to $V_{f,p}$ and therefore
is also central simple. In addition, it is very simple if and only if
the $H$-module $V_{f,p}$ is very simple.

Let $\Lambda$ be the centralizer of $\Z[\delt_q]$ in $\End(J^{(f,q)})$ and
$$\mathcal{H}=\Lambda_{\Q}:=\Lambda\otimes\Q$$
the centralizer of $\Q[\delt_q]$ in $\End^{0}(J^{(f,q)})$.
Applying Proposition \ref{normalLambdaDim} to 
$$Z=J^{(f,q)}, \ E=\Q[\delta_q], \  {\bf i}={\bf j}_{q,f}, \ O_E=\Z[\delt_q], $$
we conclude that $\mathcal{H}=\Lambda_{\Q}=\Lambda\otimes\Q$ is a central simple $\Q[\delt_q]$-algebra of dimension $d^2$ for some positive integer $d$.

In addition, if  the $\Gal(K)$-module $\ker(1-\delt_q)$ is  very simple, then either 
$$d=1, \ \mathcal{H}=\Q[\delt_q], \Lambda=\Z[\delt_q]$$
or 
$$d=N=\frac{2g}{\phi(q)}=\dim_{\F_p} (V_{f,p})$$ 
where 
$$g=\dim(J^{(f,q)}), \ \phi(q)=[\Q(\zeta_q):\Q]=[\Q[\delt_q]:\Q].$$
In light of   Remark \ref{factorQ}(i-ii) combined with Proposition \ref{normalLambdaDim}(ii), $ d \ne 2g/[\Q[\delt_q]:\Q]$. So, in the very simple case
$\mathcal{H}=\Q[\delt_q], \Lambda=\Z[\delt_q]$.

Now suppose that $\ker(1-\delt_q)$ is   {\sl not very} simple. Then $V_{p,f}$ is not very simple.
 This implies that either $n=q+1$,  or $p$ does not divide $n$ and $n-1$ is not divisible by $q$ or $q\mid n$. It follows from  
 Proposition \ref{normalLambdaDim}(i)
combined with Remark \ref{factorQ}(vii)
that $d=1$. This implies that 
$\mathcal{H}=\Q[\delt_q]$.  Therefore 
$$\Z[\delt_q]\subset \Lambda \subset \Q[\delt_q].$$
This implies that $\Lambda=\Z[\delt_q]$ and therefore $\mathcal{H}=\Q[\delt_q]$ is a maximal commutative subalgebra of
$\End^0(J^{(f,q)})$
 Now the desired result follows from Remark \ref{ssLambda}(viii).
\end{proof}

\begin{proof}[Proof of Theorems \ref{endoQ} and \ref{primeToQ}]

Enlarging $K$ if necessary, we may assume that $K$ contains a primitive $q$th root of unity, and

\begin{itemize}
\item
$\Gal(f)=\Alt(\R_f)=:H$ in the case of Theorem \ref{endoQ};
\item
 $\Gal(f)=H$ in the case of Theorem \ref{primeToQ}.
\end{itemize}
It follows from Theorem \ref{SimpleDTnew} that  $\End^0(J^{(f,q)})=\Q[\delt_q]\cong \Q(\zeta_q)$
and $J^{(f,q)}$ is an absolutely simple abelian variety
for $q=p^r$ when $r$ is any positive integer. This implies that for distinct positive integers $i$  and $j$ there are no nonzero homomorphisms
between $J^{(f,p^i)}$ and $J^{(f,p^j)}$, because they are absolutely simple abelian varieties with non-isomorphic endomorphism algebras.
This implies that the endomorphism algebra $\End^0(Y)$ of the product $Y:=\prod_{i=1}^r J^{(f,r^i)}$ is 
$\prod_{i=1}^r \Q(\zeta_{p^i})$, whose $\Q$-dimension is $q-1$.

In light of  Remark \ref{factorQ}(iv), 
if $p$ does {\sl not} divide $n$ then $Y$ is isogenous to $J(C_{f,q})$.
It follows that $\End^0(J(C_{f,q}))$ also has $\Q$-dimension $(q-1)$. However, we know that the $\Q$-algebra $\End^0(J(C_{f,q}))$
contains the $\Q$-subalgebra $\Q[\delta_q]$ of $\Q$-dimension $(q-1)$, thanks to \eqref{Qdeltaq}. This implies that
$$\End^0(J(C_{f,q}))=\Q[\delta_q]\cong \prod_{i=1}^r \Q(\zeta_{p^i}).$$
This ends the proof of Theorem \ref{primeToQ}.

Let us finish the proof of Theorem \ref{endoQ}, following \cite[Remark 4.3 and Proof of Th. 5.2 on p. 360]{ZarhinM}. It remains to do the case  when $q \mid n$, say, $n=qm$ for some positive integer $m$.
Since the case $q=p$ was already covered by already proven Theorem \ref{endo}, we may assume that
$q\ge p^2 \ge 9$ and therefore $n \ge 9$. Recall that $\Gal(f)=\Alt(\R_f) \cong \mathbf{A}_n$.
Let $\alpha \in K_a$ be a root of $f(x)$. Let us consider the overfield $K_1=K(\alpha)$ of $K$. We have
$f(x)=(x-\alpha)f_1(x)\in K_1[x]$ where $f_1(x)$ is a degree $(n-1)$ irreducible polynomial over $K_1$
with Galois group $\mathbf{A}_{n-1}$.  Let us consider the polynomials
$$h(x)=f_1(x+\alpha), \ h_1(x)=x^{n-1}\in K_1[x]$$
of degree $n-1 \ge 9-1=8$. Notice that $n-1$ is not divisible by $p$ and the Galois group of $h_1(x)$ over $K_1$
is still $\mathbf{A}_{n-1}$.   The standard substitution  
$$x_1=\frac{1}{x-\alpha}, \ y_1=\frac{y}{(x-\alpha)^m}$$
establishes a birational isomorphisms between the curves $C_{f,q}$ and $C_{h_1,q}$ \cite[p. 3359]{Towse}.
This implies that the Jacobians $J(C_{f,q})$ and $J(C_{h_1,q})$ are isomorphic and therefore their endomorphism algebras
are also isomorphic. Applying to $J(C_{h_1,q})$ the already proven part of Theorem \ref{endoQ}, we conclude
that the $\Q$-algebra $\End^0(J(C_{h_1,q}))$ has $\Q$-dimension $(q-1)$. This implies that $\End^0(J(C_{f,q}))$ also has $\Q$-dimension $q-1$. However, we know that  $\End^0(J(C_{f,q}))$
contains the $\Q$-subalgebra $\Q[\delta_q]$ of $\Q$-dimension $(q-1)$ \eqref{Qdeltaq}. This implies that 
$$\End^0(J(C_{f,q}))=\Q[\delta_q]\cong \prod_{i=1}^r \Q(\zeta_{p^i}).$$
This ends the proof of Theorem \ref{endoQ}.

\end{proof}

\end{document}